\documentclass[reqno,11pt]{amsart} 

\usepackage[utf8]{inputenc}
\usepackage{amsfonts,amssymb,amsmath,amsthm,latexsym,epsfig,amscd,bbm,stmaryrd,mathrsfs}
\usepackage[english]{babel}
\usepackage{graphicx}

\topmargin -0.5in
\textheight 9in
\oddsidemargin 0.15in
\evensidemargin 0.25in \textwidth 6.15in
\parskip=3pt plus 1pt minus 1pt

\usepackage{graphicx}
\usepackage{color}
\usepackage{tikz}
\usepackage{courier}
\usepackage{bbm}
\usepackage{enumerate}
\usepackage{psfrag}
\usepackage{wasysym}
\usepackage{type1cm}
\usepackage{bookmark}

\newcommand{\twoeqref}[2]{(\ref{#1}--\ref{#2})}

\newcommand{\ssup}[1] {{{\scriptscriptstyle{({#1}})}}} 
\marginparwidth = 57pt

\usepackage{calc}


\def\arraypar#1{\parbox[c]{\textwidth - 2cm}{\centering #1}}

\usepackage{environ}
\NewEnviron{display}{
\begin{equation}\begin{array}{c}
\arraypar{\BODY}
\end{array}\end{equation}
}

\def\clap#1{\hbox to 0pt{\hss#1\hss}}



\makeatletter \@addtoreset{equation}{section}
\makeatother

\makeatletter \@addtoreset{enunciato}{section}
\makeatother

\newcounter{enunciato}[section]

\newtheorem{ittheorem}{Theorem}
\newtheorem{itlemma}{Lemma}
\newtheorem{itproposition}{Proposition}
\newtheorem{itdefinition}{Definition}
\newtheorem{itremark}{Remark}
\newtheorem{itclaim}{Claim}
\newtheorem{itfact}{Fact}
\newtheorem{itconjecture}{Conjecture}
\newtheorem{itcorollary}{Corollary}

\newenvironment{theorem}{\addtocounter{enunciato}{1}
\begin{ittheorem}}{\end{ittheorem}}

\newenvironment{lemma}{\addtocounter{enunciato}{1}
\begin{itlemma}}{\end{itlemma}}

\newenvironment{proposition}{\addtocounter{enunciato}{1}
\begin{itproposition}}{\end{itproposition}}

\newenvironment{definition}{\addtocounter{enunciato}{1}
\begin{itdefinition}}{\end{itdefinition}}

\newenvironment{remark}{\addtocounter{enunciato}{1}
\begin{itremark}}{\end{itremark}}

\newenvironment{conjecture}{\addtocounter{enunciato}{1}
\begin{itconjecture}}{\end{itconjecture}}

\newenvironment{corollary}{\addtocounter{enunciato}{1}
\begin{itcorollary}}{\end{itcorollary}}

\newcommand{\be}[1]{\begin{equation}\label{#1}}
\newcommand{\ee}{\end{equation}}

\newcommand{\bl}[1]{\begin{lemma}\label{#1}}
\newcommand{\el}{\end{lemma}}

\newcommand{\br}[1]{\begin{remark}\label{#1}}
\newcommand{\er}{\end{remark}}

\newcommand{\bt}[1]{\begin{theorem}\label{#1}}
\newcommand{\et}{\end{theorem}}

\newcommand{\bd}[1]{\begin{definition}\label{#1}}
\newcommand{\ed}{\end{definition}}

\newcommand{\bp}[1]{\begin{proposition}\label{#1}}
\newcommand{\ep}{\end{proposition}}

\newcommand{\bc}[1]{\begin{corollary}\label{#1}}
\newcommand{\ec}{\end{corollary}}

\newcommand{\bcj}[1]{\begin{conjecture}\label{#1}}
\newcommand{\ecj}{\end{conjecture}}

\newcommand{\bpr}{\begin{proof}}
\newcommand{\epr}{\end{proof}}


\DeclareMathOperator\supp{supp}
\DeclareMathOperator\dist{dist}
\DeclareMathOperator\diam{diam}

\newcommand{\scrP}{\mathscr{P}}

\newcommand{\BB}{\mathcal B}
\newcommand{\CC}{\mathcal C}

\newcommand{\LL}{\mathcal L}
\newcommand{\MM}{\mathcal M}
\newcommand{\NN}{\mathcal N}

\newcommand{\YY}{\mathcal Y}

\def\Z{\mathbb{Z}}
\def\N{\mathbb{N}}
\def\R{\mathbb{R}}

\def\P{\mathbb{P}}
\def\E{\mathbb{E}}

\newcommand{\cc}{{\text{\rm c}}}
\newcommand{\texte}{\text{\rm e}}

\def \GW {{\mathcal{G}\mathcal{W}}} 
\def \CM {{\mathcal{C}\mathcal{M}}}

\def \UG {\mathcal{U}\mathcal{G}}

\newcommand{\mr}{\mathring}
\newcommand{\textd}{\text{\rm d}\mkern0.5mu}

\def\1{{\mathchoice {1\mskip-4mu\mathrm l}
{1\mskip-4mu\mathrm l} 
{1\mskip-4.5mu\mathrm l} {1\mskip-5mu\mathrm l}}}

\def \ba {\begin{array}}
\def \ea {\end{array}}

\def \P  {{\mathbb P}}
\def \E  {{\mathbb E}}

\def \cL {{\mathcal L}}

\def \cN {{\mathcal N}}
\def \cO {{\mathcal O}}
\def \cP {{\mathcal P}}

\def \cT {{\mathcal T}}
\def \GW {{\mathcal{G}\mathcal{W}}} 
\def \CM {{\mathcal{C}\mathcal{M}}}

\def \e {\mathrm{e}}
\def \ee {\mathrm{e}}
\def \dd {\mathrm{d}}

\DeclareSymbolFont{symbolsC}{U}{pxsyc}{m}{n}
\DeclareMathSymbol{\opentimes}{\mathrel}{symbolsC}{93}

\newcommand{\Prob}{\mathrm{P}}

\newcommand{\Probgr}{\mathfrak{P}}

\usepackage{array}
\newcolumntype{e}{>{\displaystyle}r @{\,} >{\displaystyle}c @{\,} >{\displaystyle}l}


\begin{document}

 
\title[The parabolic Anderson model on a Galton-Watson tree]
{\large The parabolic Anderson model \\ \medskip on a Galton-Watson tree}
\author[Frank den Hollander, Wolfgang K\"onig, Renato S.\ dos Santos]{}

\maketitle 
\thispagestyle{empty} 
\vspace{0.2cm} 

\centerline
{\sc Frank den Hollander\renewcommand{\thefootnote}{1}
\footnote{Mathematical Institute, University of Leiden, Niels Bohrweg 1, 2333 CA Leiden, The Netherlands,\\ 
{\tt denholla@math.leidenuniuv.nl}}, 
Wolfgang K{\"o}nig\renewcommand{\thefootnote}{2}
\footnote{Technische Universit\"at Berlin, Stra\ss e des 17.\ Juni 136, 10623 Berlin, Germany, and 
Weierstrass Institute for Applied Analysis and Stochastics (WIAS), Mohrenstra\ss e 39, 10117 Berlin, Germany,\\ 
{\tt koenig@wias-berlin.de}}, 
Renato S.~dos Santos\renewcommand{\thefootnote}{3}
\footnote{Department of Mathematics, Federal University of Minas Gerais (UFMG), Av.\ Pres.\ 
Ant\^onio Carlos 6627, Belo Horizonte, Brazil,\\
{\tt rsantos@mat.ufmg.br}}}

\medskip
\centerline{University of Leiden, TU Berlin and WIAS Berlin, UFMG}

\vspace{0.4cm}


\begin{center}
\emph{We dedicate this work to Vladas Sidoravicius, \\ who was a leading light in our understanding of disordered systems.}
\end{center}


\begin{abstract}

We study the long-time asymptotics of the total mass of the solution to the parabolic Anderson model (PAM) on a supercritical Galton-Watson random tree with bounded degrees. We identify the second-order contribution to this asymptotics in terms of a variational formula that gives information about the local structure of the region where the solution is concentrated. The analysis behind this formula suggests that, under mild conditions on the model parameters, concentration takes place on a tree with minimal degree. Our approach can be applied to locally tree-like finite random graphs, in a coupled limit where both time and graph size tend to infinity. As an example, we consider the configuration model, i.e., uniform simple random graphs with a prescribed degree sequence.

\medskip\noindent
{\bf MSC2010:} 60H25, 82B44, 05C80.

\medskip\noindent
{\bf Keywords:} Galton-Watson tree, sparse random graph, Parabolic Anderson model, double-exponential distribution, quenched Lyapunov exponent.

\medskip\noindent
{\bf Acknowledgment:} We thank Remco van der Hofstad for helpful discussions, and an anonymous referee for useful suggestions. 
FdH was supported by the Netherlands Organisation for Scientific Research through NWO Gravitation Grant NETWORKS-024.002.003 and by the Alexander von Humboldt Foundation. WK and RSdS were supported by the German Research Foundation through DFG SPP 1590 {\it Probabilistic Structures in Evolution}.
RSdS thanks the Pró-Reitoria de Pesquisa da Universidade Federal de Minas Gerais.
\end{abstract}

\maketitle 


\section{Introduction and main results}
\label{s:intro}

In Section~\ref{sec-introPAM} we give a brief introduction to the parabolic Anderson model. In Section \ref{s:PAM} we give the basic notation. In Sections~\ref{ss:GW} and \ref{ss:CM} we present our results for Galton-Watson trees and for the configuration model, respectively. In Section~\ref{ss:discussion} we discuss these results.


\subsection{The PAM and intermittency}
\label{sec-introPAM}

The \emph{parabolic Anderson model (PAM)} concerns the Cauchy problem for the heat equation with a random potential, 
i.e., solutions $u$ to the equation
\begin{equation}
\partial_t u(t,x) = \Delta u(t,x) + \xi(x) u(t,x) , \qquad t>0, \, x \in \mathscr{X},
\end{equation}
where $\mathscr{X}$ is a space equipped with a Laplacian $\Delta$, and $\xi$ is a random potential on $\mathscr{X}$. The operator $\Delta + \xi$ is called the \emph{Anderson operator}. Although $\Z^d$ and $\R^d$ are the most common choices for $\mathscr{X}$, other spaces are interesting as well, such as Riemannian manifolds or discrete graphs. In the present paper we study the PAM on \emph{random graphs}. For surveys on the mathematical literature on the PAM until 2016, we refer the reader to \cite{A16,K16}.

The main question of interest in the PAM is a detailed description of the concentration effect called {\it intermittency}: in the limit of large time the solution $u$ concentrates on small and well-separated regions in space, called \emph{intermittent islands}. This concentration effect can be studied particularly well in the PAM because efficient mathematical tools are available, such as eigenvalue expansions and the Feynman-Kac formula. In particular, these lead to a detailed description of the \emph{locations} of the intermittent islands, as well as the \emph{profiles} of the potential $\xi$ and the solution $u$ inside these islands.

The analysis of intermittency usually starts with a computation of the logarithmic large-time asymptotics of the total mass, encapsulated in {\em Lyapunov exponents}. There is an important distinction between the {\em annealed} setting (i.e., averaged over the random potential) and the {\em quenched} setting (i.e., almost surely with respect to the random potential). Often both types of Lyapunov exponents admit explicit descriptions in terms of \emph{characteristic variational formulas} that contain information about how the mass concentrates in space, and serve as starting points for deeper investigations. The \lq annealed\rq\ and the \lq quenched\rq\ variational formula are typically connected, but take two different points of view. They contain two parts: a rate function term that identifies which profiles of the potential are most favourable for mass concentration, and a spectral term that identifies which profiles the solution takes inside the intermittent islands.

From now on, we restrict to \emph{discrete} spaces and to random potentials that consist of i.i.d.\ variables. For $\Z^d$, the above intermittent picture was verified for several classes of marginal distributions. It turned out that the {\em double-exponential distribution} with parameter $\varrho \in (0,\infty)$, given by
\begin{equation}
\label{e:DEexact}
\Prob(\xi(0) > u) = \ee^{-\ee^{u/\varrho}}, \qquad u \in \R,
\end{equation}
is particularly interesting, because it leads to non-trivial intermittent islands and to interesting profiles of both potential and solution inside. There are four different classes of potentials, distinguished by the type of variational formula that emerges and the scale of the diameter of the intermittent island (cf.\ \cite{HKM06}). The double-exponential distribution is critical in the sense that the intermittent islands neither grow nor shrink with time, and therefore represents a class of its own. 

The setup of the present paper contains two features that are novel in the study of the PAM: (1) we consider a \emph{random} discrete space, thereby introducing another layer of randomness into the model; (2) this space has a \emph{non-Euclidean} topology, in the form of an \emph{exponential growth} of the volume of balls as a function of their radius. As far as we are aware, the discrete-space PAM has so far been studied only on $\Z^d$ and on two examples of finite deterministic graphs: the \emph{complete graph} with $n$ vertices \cite{FM90} and the $N$-dimensional \emph{hypercube} with $n=2^N$ vertices \cite{AGH16}. These graphs have unbounded degrees as $n\to\infty$, and therefore the Laplace operator was equipped with a prefactor that is equal to the inverse of the degree, unlike the Laplace operator considered here. 

Our main target is the PAM on a Galton-Watson tree with bounded degrees. However, our approach also applies to large finite graphs that are \emph{sparse} (e.g.\ bounded degrees) and {\em locally tree-like} (rare loops). As an illustration, we consider here the \emph{configuration model} or, more precisely, the \emph{uniform simple random graph} with prescribed degree sequence. We choose to work in the almost-sure (or large-probability) setting with respect to the randomnesses of both graph \emph{and} potential, and we take as initial condition a unit mass at the root of the graph. We identify the leading order large-time asymptotics of the total mass, and derive a variational formula for the correction term. This formula contains a \emph{spatial part} (identifying the subgraph on which the concentration takes place) and a \emph{profile part} (identifying the shape on that subgraph of both the potential and the solution). Both parts are new. In some cases we can identify the minimiser of the variational formula. As in the case of $\Z^d$, the structure of the islands does not depend on time: no spatial scaling is necessary.


\subsection{The PAM on a graph}
\label{s:PAM}

We begin with some definitions and notations, and refer the reader to \cite{A16,K16} for more background on the PAM in the case of $\Z^d$.

Let $G = (V,E)$ be a \emph{simple undirected} graph, either finite or countably infinite. Let $\Delta_G$ be the Laplacian on $G$, i.e., 
\begin{equation}
(\Delta_G f)(x) := \sum_{ {y\in V:} \atop { \{x,y\} \in E} } [f(y) - f(x)], \qquad x \in V,\,f\colon\,V\to\R.
\end{equation}
Our object of interest is the non-negative solution of the Cauchy problem for the heat equation with potential $\xi\colon\,V \to \R$ and localised initial condition,
\begin{equation}
\label{e:PAMdef}
\begin{array}{llll}
\partial_t u(x,t) &=& (\Delta_G u)(x,t) + \xi(x) u(x,t), &x \in V,\, t>0,\\
u(x,0) &=& \delta_\cO(x), &x \in V,
\end{array}
\end{equation}
where $\cO\in V$ is referred to as the \emph{origin} or \emph{root} of $G$. We say that $G$ is \emph{rooted at }$\cO$ and call $G=(V,E,\cO)$ a \emph{rooted graph}. The quantity $u(t,x)$ can be interpreted as the amount of mass present at time $t$ at site $x$ when initially there is unit mass at $\cO$. 

Criteria for existence and uniqueness of the non-negative solution to \eqref{e:PAMdef} are well-known for the case $G=\Z^d$ (see \cite{GM90}), and rely on the \emph{Feynman-Kac formula}
\begin{equation}
\label{e:FK}
u(x,t) = \E_\cO \left[ \exp \left\{\int_0^t \xi(X_s) \textd s \right\}\,\1\{X_t = x\} \right],
\end{equation}
where $X=(X_t)_{t \geq 0}$ is the continuous-time random walk on the vertices $V$ with jump rate $1$ along the edges $E$, and $\P_\cO$ denotes the law of $X$ given $X_0=\cO$. We will be interested in the \emph{total mass} of the solution, 
\begin{equation}\label{e:mass} 
U(t):= \sum_{x\in V} u(x,t) = \E_\cO \left[ \exp \left\{\int_0^t \xi(X_s) \textd s \right\}\right].
\end{equation}
Often we suppress the dependence on $G,\xi$ from the notation. Note that, by time reversal and the linearity of \eqref{e:PAMdef}, $U(t) = \hat{u}(0,t)$ with $\hat{u}$ the solution with a different initial condition, namely, constant and equal to $1$.

Throughout the paper, we assume that the random potential $\xi = (\xi(x))_{x \in V}$ consists of i.i.d.\ random variables satisfying:

\medskip\noindent
{\bf Assumption (DE).}
For some $\varrho \in (0,\infty)$,
\begin{equation}
\label{e:DE}
\Prob \left( \xi(0) \geq 0\right) = 1, \qquad \Prob \left( \xi(0) > u \right) = \ee^{-\ee^{u/\varrho}} \;\; 
\text{for } u \text{ large enough.}
\end{equation}

\medskip\noindent
Under Assumption (DE), $\xi(0) \geq 0$ almost surely and $\xi(x)$ has an eventually exact double-exponential upper tail. The latter restrictions are helpful to avoid certain technicalities that are unrelated to the main message of the paper and that require no new ideas. In particular, \eqref{e:DE} is enough to guarantee existence and uniqueness of the non-negative solution to \eqref{e:PAM} on any discrete graph with at most exponential growth, as can be inferred from the proof of the $\Z^d$-case in \cite{GM98}. All our results remain valid under \eqref{e:DEexact} or even milder conditions, e.g.\ \cite[Assumption~(F)]{GM98} plus an integrability condition on the lower tail of $\xi(0)$.

The following \emph{characteristic variational problem} will turn out to be important for the description of the asymptotics of $U(t)$ when $\xi$ has a double-exponential tail. Denote by $\cP(V)$ the set of probability measures on $V$. For $p \in \cP(V)$, define
\begin{equation}
\label{e:defIJ}
I_E(p) := \sum_{\{x,y\} \in E} \left( \sqrt{p(x)} - \sqrt{p(y)}\,\right)^2,
\qquad J_V(p) := - \sum_{x \in V} p(x) \log p(x),
\end{equation}
and set
\begin{equation}
\label{e:defchiG}
\chi_G(\varrho) := \inf_{p \in \cP(V)} [I_E(p) + \varrho J_V(p)], \qquad \varrho \in (0,\infty).
\end{equation}
The first term in \eqref{e:defchiG} is the quadratic form associated with the Laplacian, describing the solution $u(\cdot,t)$ in the intermittent islands, while the second term in \eqref{e:defchiG} is the Legendre transform of the rate function for the potential, describing the highest peaks of $\xi(\cdot)$ inside the intermittent islands. See Section \ref{ss:discussion} for its relevance and interpretation, and Section \ref{ss:chi} for alternate representations.


\subsection{Results: Galton-Watson Trees}
\label{ss:GW}

In this section we focus on our first example of a random graph.

Let $D_0$, $D_g$ be random variables taking values in $\N=\{1,2,3,\dots\}$. The \emph{Galton-Watson tree} with initial degree distribution $D_0$ and general degree distribution $D_g$ is constructed as follows. Start with a root vertex $\cO$, and attach edges from $\cO$ to $D_0$ first-generation vertices. Proceed recursively: after having attached the $n$-th generation of vertices, attach to each one of them an independent $(D_g-1)$-distributed number of new vertices, whose union gives the $(n+1)$-th generation of vertices. Denote by $\GW=(V,E)$ the graph obtained, by $\Probgr$ its probability law, and by $E$ the corresponding expectation. The law of $D_g-1$ is the offspring distribution of $\GW$, and the law of $D_g$ is the degree distribution. Write $\supp(D_g)$ to denote the set of degrees that are taken by $D_g$ with positive probability.

We will work under the following bounded-degree assumption:

\medskip\noindent
{\bf Assumption (BD).}
\begin{equation}
\label{e:GWbdddeg1}
d_{\min} := \min \supp(D_g) \geq 2, \qquad E[D_g] > 2,
\end{equation}
and, for some $d_{\max} \in \N$, $d_{\max} \geq d_{\min}$,
\begin{equation}
\label{e:GWbdddeg2}
\max \supp(D_g) \leq d_{\max}.
\end{equation}

\medskip\noindent
Under Assumption~(BD), $\GW$ is almost surely an infinite tree. Moreover,
\begin{equation}
\label{e:volumerateGW}
\lim_{r \to \infty} \frac{\log |B_r(\cO)|}{r} = \log E[D_g-1] =: \vartheta > 0 \qquad \Probgr-a.s.,
\end{equation}
where $B_r(\cO)$ is the ball of radius $r$ around $\cO$ in the graph distance (see e.g.\ \cite[pp.134--135]{LP16}). Note that Assumption (BD) allows deterministic trees with constant offspring $d_{\rm min}-1$ (provided $d_{\rm min}\geq 3$).

To state our main result, we define the constant
\begin{equation}
\label{e:deftildechi}
\widetilde{\chi}(\varrho) := \inf \big\{ \chi_T(\varrho) \colon\, T \text{ infinite tree with degrees in } \supp(D_g) \big\}
\end{equation}
with $\chi_G(\varrho)$ defined in \eqref{e:defchiG}.

\begin{theorem}{\bf [Quenched Lyapunov exponent for the PAM on $\GW$]}
\label{t:QLyapGWT}
Let $G=\GW=(V,E,\cO)$ be the rooted Galton-Watson random tree satisfying Assumption (BD), and let $\vartheta$ be as in \eqref{e:volumerateGW}. Let $\xi=(\xi(x))_{x\in V}$ be an i.i.d.\ potential satisfying Assumption (DE). Let $U(t)$ denote the total mass at time $t$ of the solution $u$ to the PAM on $\GW$. Then, as $t\to \infty$,
\begin{equation}
\label{e:QLyapGWT}
\frac{1}{t} \log U(t) = \varrho \log \left(\frac{\varrho t \vartheta}{\log\log t}\right) 
-\varrho - \widetilde{\chi}(\varrho) + o(1),
\qquad (\Prob \times \Probgr)\text{-a.s.}
\end{equation}
\end{theorem}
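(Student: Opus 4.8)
The plan is to prove matching asymptotic lower and upper bounds for $\tfrac1t\log U(t)$ via the Feynman--Kac formula \eqref{e:mass}, with $\widetilde\chi(\varrho)$ entering through the spectral reading of \eqref{e:defchiG}: for a finite connected $\Lambda$, the top Dirichlet eigenvalue of the Anderson operator is $\lambda_1(\Delta_{\GW}+\xi;\Lambda)=\sup_{p\in\cP(\Lambda)}\big[-I_E(p)-\sum_{x}(\deg_{\GW}(x)-\deg_\Lambda(x))p(x)+\langle\xi,p\rangle\big]$. The whole argument is organised around the heuristic that $U(t)$ is produced by a single \emph{intermittent island}: a bounded subtree $T^\ast\subseteq\GW$ at graph distance $R_t:=\varrho t/\log\log t$ from $\cO$ carrying the optimal potential profile $\xi(x)\approx a_t+\varrho\log p^\ast(x)$, where $a_t:=\varrho\log(\varrho t\vartheta/\log\log t)$ is the peak height and $p^\ast$ is a near-minimiser of $\chi_{T^\ast}$, so that $\lambda_1(\Delta_{\GW}+\xi;T^\ast)\approx\lambda^\ast:=a_t-\widetilde\chi(\varrho)$. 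Two facts fix the three scales. First, $\Prob(\xi(0)\ge h)=\exp(-e^{h/\varrho})$ together with $|B_r(\cO)|=e^{r\vartheta+o(r)}$ from \eqref{e:volumerateGW} forces $\max_{B_r(\cO)}\xi=\varrho\log(r\vartheta)+o(1)$ uniformly in $r$ (the double-exponential tail makes this maximum concentrate up to $o(1)$, which is why no spatial rescaling is needed), so the best island inside $B_r(\cO)$ has eigenvalue $\varrho\log(r\vartheta)-\widetilde\chi(\varrho)+o(1)$. Second, to sit on an island at distance $r$ for almost all of $[0,t]$ the walk must reach it within a time $s=o(t)$, and the cheapest way to perform $r$ prescribed jumps within time $s$ is to take $s\approx r/\lambda^\ast$, at probability cost $\exp(-r\log(r/s)(1+o(1)))=\exp(-r(\log\lambda^\ast)(1+o(1)))$. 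Optimising $\varrho\log(r\vartheta)-\widetilde\chi(\varrho)-\tfrac rt\log\lambda^\ast$ over $r$ gives $r=R_t$; since $\lambda^\ast=\varrho\log t\,(1+o(1))$ and hence $\log\lambda^\ast=(\log\log t)(1+o(1))$, the travel penalty at $r=R_t$ equals exactly $\varrho+o(1)$, which is the source of the $-\varrho$ in \eqref{e:QLyapGWT}.

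For the \textbf{lower bound} I would first show, by a first/second moment computation over $\GW$ and $\xi$, that with $(\Prob\times\Probgr)$-probability tending to $1$ there is a vertex $v$ with $\dist(\cO,v)=R_t(1+o(1))$ such that the ball $T^\ast:=B_m(v)$ is isomorphic (for a fixed large $m$) to the corresponding ball in a near-minimising tree for $\widetilde\chi(\varrho)$, and $\xi(x)=a_t+\varrho\log p^\ast(x)+o(1)$ for all $x\in T^\ast$. The arithmetical point making this possible is that $p^\ast$ is a probability measure, so $\sum_{x\in T^\ast}e^{a_t/\varrho}p^\ast(x)=e^{a_t/\varrho}$ and the expected number of such islands in the shell around distance $R_t$ is $\asymp|B_{R_t}(\cO)|\,e^{-e^{a_t/\varrho}}=e^{o(R_t)}\to\infty$ once the required peak height is relaxed to $a_t-o(1)$. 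On this event the Dirichlet eigenvalue $\mu_1$ of $\Delta_{\GW}+\xi$ on $T^\ast$ is at least $a_t-\chi_{T^\ast}(\varrho)-o(1)\ge\lambda^\ast-o(1)$, the boundary term $\sum_{x\in T^\ast}(\deg_{\GW}(x)-\deg_{T^\ast}(x))p^\ast(x)$ being $o(1)$ for $m$ large because $\sum p^\ast=1$. One then bounds $U(t)$ from below by restricting the walk in \eqref{e:mass} to the event of making $R_t(1+o(1))$ prescribed jumps straight to $v$ within time $s:=R_t/\lambda^\ast$ and thereafter staying inside $T^\ast$: the first factor contributes $\exp(-R_t(\log\lambda^\ast)(1+o(1)))=\exp(-\varrho t(1+o(1)))$, while the second contributes $\exp((t-s)\mu_1+o(t))$ by the spectral representation on $T^\ast$. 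Together this gives $\tfrac1t\log U(t)\ge\lambda^\ast-\varrho+o(1)=a_t-\widetilde\chi(\varrho)-\varrho+o(1)$; letting first $t\to\infty$ and then the truncation parameters tend to $0$ yields the lower bound.

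For the \textbf{upper bound} I would split the Feynman--Kac expectation \eqref{e:mass} according to the connected region $\Lambda$ visited by $X$ and its distance $\ell:=\dist(\cO,\Lambda)$ from $\cO$ (it suffices to let $\ell$ range up to a multiple of $t$, since reaching further costs a probability $e^{-\omega(t\log t)}$ overwhelming the potential gain). Two inputs then finish the argument. The first is an a priori "rush" estimate: the contribution of a fixed island $\Lambda$ is at most $\exp\!\big(t\lambda_1(\Delta_{\GW}+\xi;\Lambda)-\ell(\log\lambda_1)(1+o(1))\big)$, up to polynomial factors, obtained by optimising the first time the walk reaches $\Lambda$ against the rate-$1$ jump clock as above. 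The second, and the heart of the matter, is a deterministic spectral lemma: with $(\Prob\times\Probgr)$-probability tending to $1$, for every finite connected $\Lambda$ in a slowly growing multiple of $B_t(\cO)$,
\[
\lambda_1(\Delta_{\GW}+\xi;\Lambda)\ \le\ \max_{x\in\Lambda}\xi(x)-\widetilde\chi(\varrho)+o(1),\qquad\text{uniformly as }\max_\Lambda\xi\to\infty .
\]
This is where $\widetilde\chi(\varrho)$ is produced: writing $\xi(x)=\max_\Lambda\xi-\varrho\zeta(x)$ with $\zeta\ge0$, one has $\lambda_1=\max_\Lambda\xi-\inf_p\big[\widetilde I_\Lambda(p)+\varrho\langle\zeta,p\rangle\big]$ with $\widetilde I_\Lambda$ the quadratic form above, and one must show this infimum is at least $\widetilde\chi(\varrho)-o(1)$; the double-exponential tail guarantees that near-maximal values of $\xi$ are so sparse that the optimal $p$ effectively lives on a bounded subtree of $\GW$, whose local structure can only be one of the competitors in \eqref{e:deftildechi}, so that the best achievable shape cost is $\widetilde\chi(\varrho)$ rather than, say, $d_{\min}$. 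Combining the two inputs with $\max_{B_{\ell+C}(\cO)}\xi\le\varrho\log(\ell\vartheta)+o(1)$ and summing over $\ell$, the terms are maximised at $\ell=R_t$ (where the enumeration factor $|B_{R_t}(\cO)|=e^{o(t)}$ is negligible), giving $\tfrac1t\log U(t)\le a_t-\widetilde\chi(\varrho)-\varrho+o(1)$, matching the lower bound; a standard subsequence-plus-monotonicity argument upgrades the convergences in probability to the almost-sure statement \eqref{e:QLyapGWT}.

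The step I expect to be the main obstacle is the deterministic spectral lemma in the uniform form required: one must control $\lambda_1(\Delta_{\GW}+\xi;\Lambda)$ simultaneously for all (exponentially many in $t$) finite connected $\Lambda$, with an $o(1)$ error uniform as the peak diverges, and identify the sharp shape-cost constant with $\widetilde\chi(\varrho)$. This needs a careful analysis of the maximising measure $p$ in the variational formula for $\lambda_1$ --- showing its effective support is a bounded subtree and that $\langle\xi,p\rangle-\widetilde I_\Lambda(p)$ cannot exceed $\max_\Lambda\xi-\widetilde\chi(\varrho)+o(1)$ --- together with uniform control, via the double-exponential tails and \eqref{e:volumerateGW}, of the number of sites in a ball whose potential lies within a bounded window of the running maximum. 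A secondary difficulty is keeping all $o(1)$ and $o(t)$ errors genuinely uniform while juggling the three coupled scales $a_t\sim\varrho\log t$, $\lambda^\ast$, and $R_t\sim\varrho t/\log\log t$; the extra randomness of $\GW$ is comparatively harmless, entering only through the a.s.\ volume asymptotics \eqref{e:volumerateGW} and through the fact that $\GW$ contains, near distance $R_t$, copies of every admissible finite tree shape.
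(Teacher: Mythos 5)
Your lower bound is essentially the paper's: find a copy of a near-optimal finite ball for $\widetilde\chi(\varrho)$ at distance $\approx\mathfrak{r}_t=\varrho t/\log\log t$ from $\cO$ carrying a high potential profile, sprint there in time $o(t)$, and sit (this is Proposition~\ref{p:existencesubtree} plus \eqref{lowbound1}--\eqref{UlowboundGWT2}). The decomposition of scales and the source of the $-\varrho$ are correctly identified. So far so good.

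The upper bound as you sketch it has a genuine gap, and it is exactly the one the paper flags as the main technical hurdle. You propose to ``split the Feynman--Kac expectation according to the connected region $\Lambda$ visited by $X$'', use the rush estimate $\exp(t\lambda_1(\Lambda)-\ell\log\lambda_1)$ for each island, and then sum. The problem is that these two ingredients do not combine by a naive decomposition. If $\Lambda$ is taken to be the \emph{range} of $X$, then it contains $\cO$ and $\dist(\cO,\Lambda)=0$: the eigenvalue bound $\exp(t\lambda_1(\Lambda))$ from Lemma~\ref{l:bounds_mass} carries no travel penalty at all. Even with your proposed ``deterministic spectral lemma'' $\lambda_1(\Lambda)\le\max_\Lambda\xi-\widetilde\chi+o(1)$, applying it to $\Lambda=B_{r}(\cO)$ for $r\sim t\log t$ gives only $\tfrac1t\log U(t)\le\varrho\log(t\log t\cdot\vartheta)-\widetilde\chi+o(1)$, which overshoots the correct answer by $\varrho(1+o(1))$: the $-\varrho$ has been lost. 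If instead $\Lambda$ is taken to be a \emph{single terminal island} at distance $\ell$, the sum is not a partition --- a path may visit many high-potential regions, re-enter them, etc.\ --- and the number of candidate connected sets of size $n$ is $e^{\Theta(n)}$ while $n$ can range up to $t\log t$, so a union bound over your decomposition does not close. On $\Z^d$ this circle is squared by periodizing (reflection into a box), which turns the travel penalty into a geometric fact about the folded lattice; but on a graph of exponential growth the boundary of a ball is comparable to its volume and that trick fails, as the paper states in the ``Hurdles'' paragraph of Section~\ref{ss:discussion}.

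What the paper actually does for the upper bound --- and what your sketch is missing --- is the \emph{path expansion} of Section~\ref{s:pathexpansions}. Paths are decomposed into excursions between the high-exceedance set $\Pi_{r,A}$ and the complement of its $S_r$-neighbourhood $D_{r,A}$ (equations \eqref{e:concat1}--\eqref{e:concat3}), the excursion contributions are bounded site by site via Lemma~\ref{l:path_eval} and Lemma~\ref{l:mass_in}, equivalence classes of paths are counted (Lemma~\ref{l:propmassclass3}), and the outcome is the entropy-reduction Proposition~\ref{p:massclass}: for path families $\NN$ reaching distance $|z_\pi|$ from $x$,
\begin{equation*}
\log \E_x\Big[\ee^{\int_0^t\xi(X_s)\,\dd s}\1_{\{\pi(X_{[0,t]})\in\NN\}}\Big]\le\sup_{\pi\in\NN}\Big\{t\,\gamma_\pi-\big(\log^{\ssup 3}L_r-c_A\big)\dist_G(x,z_\pi)\Big\},
\end{equation*}
where $\gamma_\pi$ is slightly above the largest island eigenvalue seen by $\pi$. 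This is the precise form of your rush estimate, but proven for \emph{sets} of paths, with a correctly coupled travel term, and feeding directly into Lemma~\ref{l:UBpieces} where the optimization over the shell index $k$ produces $-\varrho$. Your ``deterministic spectral lemma'' is the wrong key input: the eigenvalue bound that is used (Lemma~\ref{l:eigislands}, Corollary~\ref{c:eigislandsGW}) is \emph{random}, holds only for the connected components of $D_{r,A}$ (whose sizes are controlled by Lemma~\ref{l:size_comps}), and is stated relative to $a_{L_r}$ rather than $\max_\Lambda\xi$. Without the path-expansion machinery coupling distance travelled to eigenvalue gain, the upper bound does not close, and you do not indicate any substitute for it.
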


\noindent
The proof of Theorem~\ref{t:QLyapGWT} is given in Section~\ref{s:proofoutline}.

For $\varrho$ sufficiently large we can identify the infimum in \eqref{e:deftildechi}. 
For $d \ge 2$, denote by $\cT_d$ the \emph{infinite homogeneous tree} with degree equal to $d$ at every node.

\begin{theorem}{\bf [Identification of the minimiser]}
\label{t:tildechilargerho}
If $\varrho \geq 1/\log(d_{\rm min}+1)$, then $\widetilde{\chi}(\varrho) = \chi_{\cT_{d_{\min}}}(\varrho)$.
\end{theorem}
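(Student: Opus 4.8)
The plan is to show that among all infinite trees $T$ with degrees in $\supp(D_g)$, the homogeneous tree $\cT_{d_{\min}}$ minimizes $\chi_T(\varrho)$ when $\varrho$ is large. The inequality $\widetilde\chi(\varrho) \le \chi_{\cT_{d_{\min}}}(\varrho)$ is immediate from the definition \eqref{e:deftildechi}, since $\cT_{d_{\min}}$ is itself such a tree (note $d_{\min} \ge 2$, so it is infinite). The content is the reverse inequality: for every infinite tree $T$ with all degrees in $\supp(D_g) \subseteq \{d_{\min}, \dots, d_{\max}\}$ we must have $\chi_T(\varrho) \ge \chi_{\cT_{d_{\min}}}(\varrho)$.

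The heuristic is that $\chi_G(\varrho) = \inf_p [I_E(p) + \varrho J_V(p)]$ balances a ``spread-out'' cost (the Dirichlet form $I_E$) against an entropy reward ($J_V$); a tree with larger degrees offers more volume and hence a larger possible entropy, which for large $\varrho$ is advantageous and thus \emph{lowers} $\chi$. So a tree whose degrees are as small as possible everywhere — that is $\cT_{d_{\min}}$ — should be the worst case, i.e.\ have the \emph{largest} $\chi$. To turn this into a proof I would first seek a clean lower bound on $\chi_T(\varrho)$ valid for any tree $T$ with minimal degree $\ge d_{\min}$, of the form $\chi_T(\varrho) \ge F(\varrho, d_{\min})$, and then show that this bound is attained (or attained in the limit) by $\cT_{d_{\min}}$, giving $F(\varrho,d_{\min}) = \chi_{\cT_{d_{\min}}}(\varrho)$ and closing the loop. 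The natural route is: for a near-optimal $p$ on $T$, decompose $I_E(p) + \varrho J_V(p)$ vertex by vertex. For each vertex $x$ with $p$-mass $p(x)$ and children carrying masses summing to $m_x$, the local contribution to $I_E$ is a sum of squared-difference-of-square-roots terms over the $\deg(x)$ incident edges, and the local contribution to $\varrho J_V$ is $-\varrho p(x)\log p(x)$. Since $\deg(x) \ge d_{\min}$, and since for fixed total child-mass the Dirichlet term is minimized by spreading it evenly among the children, one can bound the contribution at $x$ from below by the corresponding expression on $\cT_{d_{\min}}$ with the same mass at $x$ and evenly-distributed children. Summing over $x$ and optimizing over the resulting mass profile — which by the tree's homogeneity reduces to a one-dimensional recursion over generations, parametrized by the ratio $r_n = p(\text{generation } n+1)/p(\text{generation } n)$ — should reproduce exactly the variational problem defining $\chi_{\cT_{d_{\min}}}(\varrho)$.

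The hardest part, I expect, is making the vertex-by-vertex comparison rigorous while respecting the global constraint $\sum_x p(x) = 1$: the optimal $p$ on a general $T$ need not be ``radial'' in any sense, the tree $T$ need not be spherically symmetric, and extra children at some vertices (degrees above $d_{\min}$) interact nontrivially with the entropy term. One technically delicate point is that the infima defining $\chi$ need not be attained on infinite graphs, so I would work with finite truncations $B_R(\cO)$ (with either free or Dirichlet boundary), establish the comparison there using the known monotonicity/exhaustion properties of $\chi$ from Section \ref{ss:chi}, and pass to the limit $R \to \infty$. The role of the hypothesis $\varrho \ge 1/\log(d_{\min}+1)$ is presumably exactly what is needed to guarantee that the one-dimensional generational recursion on $\cT_{d_{\min}}$ has its optimum in the regime where the comparison is monotone in the degree — i.e.\ that increasing a vertex's degree can only help — so I would identify the threshold by solving the $\cT_d$ variational problem explicitly (using the alternate representations of $\chi$ promised in Section \ref{ss:chi}, where on a homogeneous tree it should reduce to a scalar optimization), checking that below $1/\log(d_{\min}+1)$ the monotonicity in $d$ can fail and above it holds. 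Finally, I would assemble: lower bound on arbitrary $T$ $\;\ge\;$ value of the reduced scalar problem for $d_{\min}$ $\;=\;$ $\chi_{\cT_{d_{\min}}}(\varrho)$, which together with the trivial upper bound yields $\widetilde\chi(\varrho) = \chi_{\cT_{d_{\min}}}(\varrho)$.
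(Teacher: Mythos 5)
Your heuristic is stated backwards: you write that $\cT_{d_{\min}}$ ``should be the worst case, i.e.\ have the largest $\chi$'', but $\widetilde{\chi}$ is an \emph{infimum} and the theorem says it equals $\chi_{\cT_{d_{\min}}}$, so $\cT_{d_{\min}}$ achieves the \emph{smallest} $\chi$ among admissible trees. Both $I_E\ge 0$ and $J_V\ge 0$ enter the functional as costs, not rewards; the correct mechanism, discussed in the paper after the statement of Theorem~\ref{t:tildechilargerho}, is simply that $I_E$ has one non-negative summand per edge, so fewer edges per vertex means a smaller Dirichlet penalty and hence a smaller $\chi$. You do eventually write the right target inequality $\chi_T\ge\chi_{\cT_{d_{\min}}}$, but the sign error resurfaces when you say ``increasing a vertex's degree can only help''.

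The concrete gap is in the vertex-by-vertex comparison. For fixed $a=p(x)$ and total child mass $M=m_x$, the minimum of $\sum_{i=1}^k(\sqrt{a}-\sqrt{p_i})^2$ over splits $\sum_i p_i=M$, $p_i\ge 0$, equals $\phi(k)=ka-2\sqrt{kaM}+M$; this is convex in $k$, decreasing for $k<M/a$ and increasing only for $k\ge M/a$. So the claimed pointwise monotonicity $\phi(\deg(x)-1)\ge\phi(d_{\min}-1)$ requires $(d_{\min}-1)p(x)\ge m_x$ at every vertex, a mass-decay property of the (unknown) near-optimal $p$ on a general $T$ that would itself need to be established; and even granting it, discarding the extra children discards the mass of their subtrees and breaks the normalisation $\sum_x p(x)=1$, so the local estimates cannot be summed directly. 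The paper avoids both issues with a global surgery argument rather than a local decomposition: Lemmas~\ref{l:compjoin2}--\ref{l:comparelargerho} show that glueing graphs by a single edge (possibly through an added vertex) cannot bring $\chi$ below the minimum of the pieces, Proposition~\ref{p:minimalchiT} uses this inductively to prove $\chi_{\cT_d}=\min_{T\in\mathscr{T}_d}\chi_T$, and Theorem~\ref{t:tildechilargerho} then follows by truncating an arbitrary admissible $T$ to $B_r$, attaching $d_{\min}$-ary half-trees at the boundary (this only removes out-going edges, hence decreases $\widehat{\chi}_{B_r}$ by Lemma~\ref{l:hatchidualrep}), and sending $r\to\infty$ via Proposition~\ref{p:dualrepchi}. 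The hypothesis $\varrho\ge 1/\log(d_{\min}+1)$ enters not through a degree-monotone generational recursion, as you speculate, but through Lemma~\ref{l:comparelargerho}: the entropy gain $\varrho\log(k+1)$ from splitting mass at a degree-$(k+1)\ge d_{\min}+1$ vertex must dominate the a priori constant $1$ from Lemma~\ref{l:condchiT}.
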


\noindent
The proof of Theorem~\ref{t:tildechilargerho} is given in Section \ref{s:analysischi} with the help of a comparison argument that appends copies of the infinite $d_{\rm min}$-tree to itself. We believe $\cT_{d_{\min}}$ to be the \emph{unique} minimizer of \eqref{e:deftildechi} under the same assumptions, but proving so would require more work.


\subsection{Results: Configuration Model}
\label{ss:CM}

In this section we focus on our second example of a random graph.

For $n \in \N$, let $\mathfrak{d}^{\ssup n} = (d_i^{\ssup n})_{i=1}^n$ be a collection of positive integers. The \emph{configuration model with degree sequence} $\mathfrak{d}^{\ssup n}$ is a random \emph{multigraph} (i.e., a graph that may have self-loops and multiple edges) on the vertex set $V_n := \{1, \ldots, n\}$ defined as follows. To each $i \in V_n$, attach $d_i^{\ssup n}$ \lq half-edges\rq. After that, construct edges by successively attaching each half-edge uniformly at random to a remaining half-edge. For this procedure to be successful, we must require that
\begin{equation}
\label{e:eventotdeg}
d_1^{\ssup n} + \cdots + d_n^{\ssup n} \text{ is even for every } n \in \N.
\end{equation}
Draw a root $\cO_n$ uniformly at random from $V_n$. Denote by $\CM_n = (V_n, E_n, \cO_n)$ the rooted multigraph thus obtained, and by $\Probgr_n$ its probability law. For further details, we refer the reader to \cite[Chapter~7]{vdH17a}.

We will work under the following assumption on $\mathfrak{d}^{\ssup n}$:

\medskip\noindent
{\bf Assumption~(CM):}
The degree sequences $\mathfrak{d}^{\ssup n} = (d_i^{\ssup n})_{i=1}^n$, $n\in\N$, satisfy \eqref{e:eventotdeg}. Moreover,
\begin{enumerate}
\item 
There exists an $\N$-valued random variable $D$ such that $d_{\cO_n}^{\ssup n} \Rightarrow D$ as $n\to\infty$.
\item 
$d_{\min} := \min \supp(D) \geq 3$.
\item 
There exists a $d_{\max} \in \N$ such that $2 \leq d_i^{\ssup n} \leq d_{\max}$ for all $n \in \N$  and $1 \leq i \leq n$.
\end{enumerate}

\medskip\noindent
In particular, $3\leq d_{\rm min}\leq d_{\rm max}<\infty$ and $D \leq d_{\max}$ almost surely. It is possible to take $\mathfrak{d}^{\ssup n} $ random. In that case Assumption~(CM) must be required almost surely or in probability with respect to the law of $\mathfrak{d}^{\ssup n} $, and our results below must be interpreted accordingly.

\begin{proposition}{\bf [Connectivity and simplicity of $\CM_n$]}
\label{p:CMsimpleconnected}
Under Assumption~(CM),
\begin{equation}
\label{e:CMprobsimp}
\lim_{n \to \infty} \Probgr_n( \CM_n \text{ is a simple graph}) = \ee^{-\frac{\nu}{2}- \frac{v^2}{4}},
\end{equation}
where
\begin{equation}
\label{e:defnu}
\nu := \frac{E[D(D-1)]}{E[D]} \in [2,\infty).
\end{equation}
Moreover,
\begin{equation}
\label{e:CMprobconnect}
\lim_{n \to \infty} \Probgr_n\big(\CM_n \text{ is connected} \mid \CM_n \text{ is simple}\big) = 1.
\end{equation}
\end{proposition}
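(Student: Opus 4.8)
The plan is to reduce both statements to standard facts about the configuration model restricted to bounded degree sequences. For the simplicity probability \eqref{e:CMprobsimp}: I would first recall the classical computation (going back to Bollob\'as, in the form stated in \cite[Chapter~7]{vdH17a}) that for a degree sequence satisfying a uniform-integrability/second-moment condition, the number of self-loops and the number of multiple edge-pairs in $\CM_n$ converge jointly to independent Poisson random variables with parameters $\nu/2$ and $\nu^2/4$ respectively, where $\nu$ is as in \eqref{e:defnu}. Under Assumption~(CM)(3) the degrees are uniformly bounded by $d_{\max}$, so all the relevant moments of the empirical degree distribution converge (they are bounded, hence uniformly integrable), and by Assumption~(CM)(1)--(2) the empirical degree distribution converges weakly to the law of a size-biased-adjusted $D$; here one needs $d_{\cO_n}^{\ssup n}\Rightarrow D$ to be upgraded to convergence of the full empirical degree distribution, which on bounded supports follows because the empirical distribution of degrees equals the law of $d_{\cO_n}^{\ssup n}$ by definition of a uniformly chosen root. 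Then $\CM_n$ is simple iff it has no self-loops and no multiple edges, so the limiting probability is $\P(\Poisson(\nu/2)=0)\,\P(\Poisson(\nu^2/4)=0)=\e^{-\nu/2-\nu^2/4}$, which is \eqref{e:CMprobsimp} (with the paper's $v$ denoting $\nu$).

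For the asymptotic connectivity \eqref{e:CMprobconnect}: the key input is the standard result that, under a bounded-degree assumption with $d_{\min}\geq 3$, the configuration model $\CM_n$ is connected with high probability — indeed the giant component is the whole graph w.h.p.\ when the minimal degree is at least $3$, since then there are no vertices of degree $1$ or $2$ to create small components and a first-moment / switching argument shows the expected number of components of size $k$ with $2\le k\le n/2$ is $o(1)$. I would cite this from \cite{vdH17a} or reprove it via the exploration process: start a breadth-first exploration from any vertex; because every vertex has degree $\geq 3$, the number of active half-edges has a positive drift until a constant fraction of the graph has been explored, so the exploration does not die out early, giving a component of size $\Theta(n)$; a union bound over starting vertices then shows all vertices lie in one component w.h.p. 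Finally, to pass from the unconditional statement $\Probgr_n(\CM_n\text{ connected})\to 1$ to the conditional one given simplicity, I use that $\Probgr_n(\CM_n\text{ simple})$ is bounded away from $0$ (by \eqref{e:CMprobsimp} and $\nu<\infty$): for any events, $\Probgr_n(A^c\mid B)\le \Probgr_n(A^c)/\Probgr_n(B)\to 0/(\e^{-\nu/2-\nu^2/4})=0$ with $A=\{\text{connected}\}$, $B=\{\text{simple}\}$, giving \eqref{e:CMprobconnect}.

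The main obstacle is the connectivity estimate, specifically controlling components of \emph{all} sizes up to $n/2$ uniformly — the exploration-process drift argument handles linear-sized components easily, but ruling out mesoscopic components of size, say, $n/\log n$ requires either a careful large-deviation bound on the exploration or an appeal to the switching-method estimates in the literature. The rest — the Poisson limit for loops and multi-edges, the weak convergence of the empirical degree distribution on a bounded support, and the elementary conditioning step — is routine and can be quoted from \cite{vdH17a}. I would therefore structure the proof as: (i) cite or recall the joint Poisson limit to get \eqref{e:CMprobsimp}; (ii) cite or sketch the high-probability connectivity of bounded-degree configuration models with $d_{\min}\ge 3$; (iii) combine via the conditioning bound above, using that the simplicity probability has a strictly positive limit.
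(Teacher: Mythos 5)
Your proposal is correct and matches the paper's approach, which is simply to cite \cite[Theorem~7.12]{vdH17a} for the Poisson limit giving simplicity and \cite[Theorem~2.3]{FvdH17} for connectivity when $d_{\min}\geq 3$. Your sketch faithfully unpacks what lies behind those two citations (and correctly reads the typo $v$ as $\nu$ in the displayed limit), and your observation that the exploration-drift heuristic alone does not rule out mesoscopic components is exactly the reason the paper defers to the quoted references rather than reproving connectivity.
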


\begin{proof}
See \cite[Theorem~7.12]{vdH17a} and \cite[Theorem~2.3]{FvdH17}.
\end{proof}

Item \eqref{e:CMprobsimp} in Proposition~\ref{p:CMsimpleconnected} tells us that for large $n$ the set
\begin{equation}
\mathscr{U}_n(\mathfrak d^{\ssup n}):= \left\{ \text{simple graphs on }
\{1,\dots,n\}\text{ with degrees }  d_1^{\ssup n},\dots,d_n^{\ssup n} \right\}
\end{equation}
is non-empty. Hence, we may consider the \emph{uniform simple random graph} $\UG_n$ that is drawn uniformly at random from $\mathscr{U}_n(\mathfrak d^{\ssup n})$.

\begin{proposition}{\bf [Conditional law of $\CM_n$ given simplicity]}
\label{p:CMisUG}
Under the conditional law $\Probgr_n(~\cdot \mid \CM_n \text{ is simple})$, $\CM_n$ has the same law as $\UG_n$.
\end{proposition}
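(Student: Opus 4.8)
The plan is to make the restriction of the law of $\CM_n$ to simple graphs completely explicit by a counting argument, and then to read off that it is the uniform law on $\mathscr{U}_n(\mathfrak d^{\ssup n})$. Recall first that, disregarding the root for the moment, the sequential half-edge pairing defining $\CM_n$ is equivalent to choosing a uniformly random perfect matching of the set of half-edges; writing $\ell_n := \sum_{i=1}^n d_i^{\ssup n}$ (even, by \eqref{e:eventotdeg}) for the total number of half-edges, there are $(\ell_n-1)!!$ such matchings, each occurring with probability $1/(\ell_n-1)!!$, and each matching $M$ determines a multigraph $G(M)$ on $V_n$ with degree sequence $\mathfrak d^{\ssup n}$.

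Next I would fix an arbitrary $G \in \mathscr{U}_n(\mathfrak d^{\ssup n})$ and count the matchings $M$ with $G(M)=G$. Label the $d_i^{\ssup n}$ half-edges at vertex $i$ arbitrarily by $1,\dots,d_i^{\ssup n}$. Since $G$ is simple, vertex $i$ has exactly $d_i^{\ssup n}$ distinct neighbours, joined to $i$ by one edge each, so realising $G$ amounts to choosing at each vertex $i$ a bijection from its half-edges to its incident edges in $G$; any such family of bijections (one per vertex) yields, for every edge $\{i,j\}$ of $G$, a well-defined matched pair consisting of the half-edge at $i$ and the half-edge at $j$ assigned to that edge, hence a matching $M$ with $G(M)=G$. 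Conversely every such $M$ arises this way, and distinct families of bijections give distinct matchings. Therefore the number of matchings realising $G$ equals $\prod_{i=1}^n d_i^{\ssup n}!$, and in particular does not depend on $G$.

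It follows that $\Probgr_n$ assigns the same probability $\prod_{i=1}^n d_i^{\ssup n}!\,\big/\,(\ell_n-1)!!$ to the event $\{G(M)=G\}$ for every $G \in \mathscr{U}_n(\mathfrak d^{\ssup n})$. Conditioning on $\{\CM_n \text{ is simple}\}$, which is precisely the event $\{G(M) \in \mathscr{U}_n(\mathfrak d^{\ssup n})\}$, thus produces the uniform law on $\mathscr{U}_n(\mathfrak d^{\ssup n})$, i.e.\ the law of $\UG_n$ as an unrooted labelled graph. To pass to the rooted versions, note that $\cO_n$ is drawn uniformly on $V_n$ independently of the matching and that the event $\{\CM_n \text{ is simple}\}$ does not involve $\cO_n$; hence, conditionally on simplicity, $\cO_n$ is still uniform on $V_n$ and independent of the underlying graph, matching the (independent, uniform) rooting convention for $\UG_n$. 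This gives the claim.

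The only step with genuine content is the combinatorial identity of the second paragraph: exactly $\prod_i d_i^{\ssup n}!$ half-edge pairings realise a fixed graph, independently of the graph. This is where simplicity is indispensable — for a multigraph, self-loops and parallel edges create symmetries of the half-edge structure and the analogous count picks up graph-dependent symmetry factors, destroying uniformity. Everything else is bookkeeping, and the statement is classical; see \cite[Chapter~7]{vdH17a}.
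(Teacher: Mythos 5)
Your proof is correct and is the standard double-counting argument: the configuration model corresponds to a uniform perfect matching of the half-edges, exactly $\prod_{i=1}^n d_i^{\ssup n}!$ matchings realise any fixed simple graph with the given degree sequence, so conditioning on simplicity gives the uniform law, and the uniformly chosen root is independent of the simplicity event. The paper's proof is simply the citation \cite[Proposition~7.15]{vdH17a}, which establishes the result via precisely this computation, so your argument matches the intended one.
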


\begin{proof}
See \cite[Proposition~7.15]{vdH17a}.
\end{proof}

As usual, for a sequence of events $(A_n)_{n \in \N}$, we say that $A_n$ occurs \emph{with high probability (whp)} as $n \to \infty$ if the probability of $A_n$ tends to $1$ as $n \to \infty$. This notion does not require the events to be defined on the same probability space. We denote by $\dist_{\rm TV}(X,Y)$ the total variation distance between two random variables $X$ and $Y$ (i.e., between their laws). Let
\begin{equation}
\label{e:defPhin}
\Phi_n := \Big( \frac1n \vee \dist_{\rm TV}(d^{\ssup n}_{\cO_n}, D) \Big)^{-1},
\end{equation}
and note that, by Assumption~(CM), $\Phi_n \to \infty$ as $n \to \infty$.

\begin{theorem}{\bf [Quenched Lyapunov exponent for the PAM on $\UG_n$]}
\label{t:QLyapCM}
For any $n\in\N$, let $G=\UG_n$ be the uniform simple random graph with degree sequence $\mathfrak d^{\ssup n}$ satisfying Assumption~(CM). For any $n\in\N$, let $\xi$ be an i.i.d.\ potential on $V_n$ satisfying Assumption~(DE). Let $U_n(t)$ denote the total mass of the solution to the PAM on $G=\UG_n$ as defined in Section \ref{s:PAM}. Fix a sequence of times $(t_n)_{n\in\N}$ with $t_n\to\infty$ and $t_n \log t_n = o(\log \Phi_n)$ as $n \to\infty$. Then, with high $\Prob \times \Probgr_n$-probability as $n \to \infty$,
\begin{equation}
\label{e:QLyapCM}
\frac{1}{t_n} \log U_n(t_n) = \varrho \log \left(\frac{\varrho t_n \vartheta}{\log\log t_n}\right) 
-\varrho - \widetilde{\chi}(\varrho) + o(1),
\end{equation}
where $\vartheta := \log \nu > 0$ with $\nu$ as in \eqref{e:defnu}, and $\widetilde{\chi}(\varrho)$ is as in \eqref{e:deftildechi}.
\end{theorem}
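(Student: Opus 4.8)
\textbf{Proof proposal for Theorem~\ref{t:QLyapCM}.}

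The plan is to transfer the Galton-Watson result, Theorem~\ref{t:QLyapGWT}, to the finite random graph $\UG_n$ by exploiting the fact that, up to depth of order $\log\Phi_n / \log d_{\max}$, the graph $\UG_n$ rooted at the uniform vertex $\cO_n$ looks like a Galton-Watson tree with root-offspring law $D-1$ and subsequent-offspring law governed by the size-biased version of $D$. The constraint $t_n\log t_n = o(\log\Phi_n)$ is precisely what guarantees that the random walk appearing in the Feynman-Kac formula \eqref{e:FK}, when run up to time $t_n$, does not escape this tree-like neighbourhood; indeed, a walk with jump rate $1$ on a graph of maximal degree $d_{\max}$ that runs for time $t_n$ reaches graph-distance at most $C t_n$ with overwhelming probability, and even allowing for the ``cost'' of pushing the walk to distance $r$ (which is paid at rate $\log r$ per unit time, explaining the $\log\log t$ correction), the relevant radius is $O(t_n \log t_n)$, well within the tree-like ball when $n$ is large. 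First I would quote the standard local-weak-convergence / coupling statements for the configuration model (e.g.\ \cite[Chapter~4]{vdH17a} together with switching arguments, or \cite{FvdH17}) to produce, whp in $\Probgr_n$, a coupling of $B_{r_n}(\cO_n)$ in $\UG_n$ with the corresponding ball in a $\GW$-tree $T$ whose degree distribution has support inside $\supp(D)\subseteq\{d_{\min},\dots,d_{\max}\}$, where $r_n\to\infty$ is chosen so that $r_n\le c\log\Phi_n/\log d_{\max}$ and still $t_n\log t_n = o(r_n)$; the $\tfrac1n$ in the definition of $\Phi_n$ ensures the coupling depth is not artificially curtailed when $\mathfrak d^{\ssup n}$ is exactly matched.

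Next I would establish the two matching bounds. For the \emph{upper bound} on $\tfrac1{t_n}\log U_n(t_n)$, I would use the Feynman-Kac representation \eqref{e:mass} and split the expectation according to whether the walk stays in $B_{r_n}(\cO_n)$ or leaves it; on the good event the walk sees only the coupled tree, and the contribution is bounded above by the total mass of the PAM on that tree, to which the upper-bound half of Theorem~\ref{t:QLyapGWT} (or rather its finite-volume precursor) applies, while the escape event is controlled by a crude deterministic bound $\xi\le C\log r_n$ on the maximal potential in $B_{r_n}$ — this holds whp since $|B_{r_n}|\le d_{\max}^{r_n}$ and the double-exponential tail gives $\max_{x\in B_{r_n}}\xi(x)\le \varrho\log\log|B_{r_n}| + O(1) = O(\log r_n)$ — combined with the Markov-inequality / large-deviation cost for the walk to traverse distance $r_n$ in time $t_n$, which is $o(t_n)$ in the exponent. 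For the \emph{lower bound}, I would exploit the fact that the optimal strategy in Theorem~\ref{t:QLyapGWT} is \emph{local}: the variational constant $\widetilde\chi(\varrho)$ is attained (or approximated) by a profile supported on a finite subtree $T^\star$ of bounded depth with degrees in $\supp(D)$, and the leading term comes from the walk reaching, within a short initial time window, a vertex $z$ at distance $\sim \varrho t_n/\text{(something involving }\log\log t_n)$ around which a rare high-exceedance region shaped like $T^\star$ sits. By the coupling, whp such a region exists inside $B_{r_n}(\cO_n)$ (one needs a second-moment / Borel–Cantelli-type argument showing that high peaks of the required profile appear at the right density within the tree-like ball, exactly as in the $\GW$ proof), and restricting the Feynman-Kac expectation to the single trajectory that goes there and stays gives the matching lower bound. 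Both halves therefore reduce, modulo the boundary/escape estimates, to reusing the estimates already proved for $\GW$.

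The main obstacle I anticipate is \emph{not} the Galton-Watson comparison per se but making the coupling uniform and quantitative enough that the $\Probgr_n$-a.s.\ statements of Theorem~\ref{t:QLyapGWT} become whp statements for $\UG_n$ with the correct error $o(1)$ in the exponent. Concretely, one must control the total variation distance between $B_{r_n}(\cO_n)$ under $\UG_n$ and under the limiting $\GW$-measure when $r_n$ itself grows with $n$; the known results give $o(1)$ TV-distance for \emph{fixed} radius, and pushing $r_n$ up to $c\log\Phi_n/\log d_{\max}$ requires tracking the accumulation of coupling errors across generations (each half-edge pairing introduces an error of order $|B_{r_n}|/(\text{number of remaining half-edges})$, which is summable precisely when $r_n$ is below the stated threshold). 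A secondary technical point is the transition from $\CM_n$ to $\UG_n$: by Propositions~\ref{p:CMsimpleconnected} and~\ref{p:CMisUG} the conditional law of $\CM_n$ given simplicity is exactly that of $\UG_n$, and since $\Probgr_n(\CM_n\text{ simple})$ converges to the strictly positive constant $\ee^{-\nu/2-v^2/4}$, any event that holds whp under $\Probgr_n$ for $\CM_n$ also holds whp for $\UG_n$ — so it suffices to prove everything for $\CM_n$ first. I would also need to note that $\vartheta=\log\nu$ is the almost-sure volume growth exponent \eqref{e:volumerateGW} of the relevant $\GW$-tree (the size-biasing turns $E[D_g-1]$ into $\nu = E[D(D-1)]/E[D]$), so the leading term in \eqref{e:QLyapCM} is literally the one from \eqref{e:QLyapGWT} with this value of $\vartheta$, and the correction term $-\varrho-\widetilde\chi(\varrho)$ is unchanged because $\supp(D)$ plays the role of $\supp(D_g)$ in \eqref{e:deftildechi}.
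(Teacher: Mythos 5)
Your proposal is essentially the paper's argument, and it would work. The paper's proof of Theorem~\ref{t:QLyapCM} is exactly the clean transfer you describe: pass from $\CM_n$ to $\UG_n$ via Propositions~\ref{p:CMsimpleconnected}--\ref{p:CMisUG}, couple $B_{r_n}(\cO_n)$ in $\UG_n$ with the corresponding ball in the associated Galton--Watson tree (Proposition~\ref{p:couplingCMGW}, cited from van der Hofstad rather than re-derived), apply the spatial truncation Lemma~\ref{l:firsttruncation} to both graphs to show the walk contributing to the Feynman--Kac formula stays inside a ball of radius $\lceil t_n\log t_n\rceil = o(\log\Phi_n)$ up to an $\ee^{-t_n\log t_n}$ error, and then invoke Theorem~\ref{t:QLyapGWT} with $\vartheta=\log\nu=\log E[D_\star-1]$.

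Two minor points worth fixing. First, you need not re-derive the upper and lower bounds separately: on the coupling event the truncated masses $U_n^{\circ}(t_n)$ and $U^{\circ}(t_n)$ are \emph{equal}, so both halves of the asymptotics transfer at once after Lemma~\ref{l:firsttruncation} has disposed of the escaping paths for both graphs; your planned ``crude $\xi\le C\log r_n$ plus large-deviation cost'' estimate for the escape event is exactly the content of that lemma and can just be cited. Second, the offspring law of the coupled $\GW$ tree is off by one in your description: the root has degree (and hence offspring count) $D$, and non-root vertices have degree $D_\star$ (the size-biased $D$) and hence $D_\star-1$ children; writing ``root-offspring law $D-1$'' inverts where the $-1$ goes. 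Neither slip affects the structure of the argument.
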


\noindent
The proof of Theorem~\ref{t:QLyapCM} is given in Section~\ref{s:proofThCM}. The main ingredients in the proof are Theorem~\ref{t:QLyapGWT} and a well-known comparison between the configuration model and an associated Galton-Watson tree inside a slowly-growing ball, from which the condition on $t_n$ originates.

Condition (1) in Assumption (CM) is a standard regularity condition. Conditions (2) and (3) provide easy access to results such as Propositions~\ref{p:CMsimpleconnected}--\ref{p:CMisUG} above. As examples of degree sequences satisfying Assumption~(CM) we mention:
\begin{itemize}
\item 
{\em Constant degrees.} In the case where $d_i=d \geq 3$ for a deterministic $d \in \N$ and all $1 \leq i \leq n$, we have $d_{\cO_n}=D=d$ almost surely, and $\UG_n$ is a uniform regular random graph. To respect \eqref{e:eventotdeg}, it is enough to restrict to $n$ such that $nd$ is even. In this case $\dist_{\rm TV}(d_{\cO_n}, D) = 0$, and so $\Phi_n = n$ in \eqref{e:defPhin}.
\item 
{\em Random degrees.} In the case where $(d_i)_{i \in \N}$ forms an i.i.d.\ sequence taking values in $\{3,\dots,d_{\max}\}$, classical concentration bounds (e.g.\ Azuma's inequality) can be used to show that, for any $\gamma \in (0,\tfrac12)$,
\begin{equation}
\label{e:TViidcase}
d_{\rm TV}(d_{\cO_n}, D) = o(n^{-\gamma}) \quad \text{ almost surely as } n \to \infty,
\end{equation}
and so $\Phi_n \gg n^\gamma$. The condition in \eqref{e:eventotdeg} can be easily satisfied after replacing $d_n$ by $d_n + 1$ when $d_1 + \cdots + d_n$ is odd, which does not affect \eqref{e:TViidcase}. With this change, Assumption~(CM) is satisfied. For more information about $\CM_n$ with i.i.d.\ degrees, see \cite[Chapter~7]{vdH17a}.
\end{itemize}


\subsection{Discussion}
\label{ss:discussion}

Our main results, Theorems \ref{t:QLyapGWT} and \ref{t:QLyapCM}, identify the quenched logarithmic asymptotics of the total mass of the PAM. Our proofs show that the first term in the asymptotics comes from the height of the potential in an intermittent island, the second term $-\varrho$ from the probability of a quick sprint by the random walk in the Feynman-Kac formula from $\cO$ to the island, and the third term $\widetilde\chi(\varrho)$ from the structure of the island and the profile of the potential inside. Below we explain how each of these three terms comes about. Much of what follows is well-known from the study of the PAM on $\Z^d$ (see also \cite{K16}), but certain aspects are new and derive from the randomness of the ambient space and its exponential growth.

\medskip\noindent
$\blacktriangleright$ Galton-Watson tree.

\paragraph{$\bullet$ \bf First and second terms.}
The large-$t$ asymptotics of the Feynman-Kac formula \eqref{e:FKformula} for $U(t)$ comes from those random walk paths $(X_s)_{s\in[0,t]}$ that run within $\mathfrak s_t$ time units to some favorable local region of the graph (the intermittent island) and subsequently stay in that region for the rest of the time. In order to find the scale $\mathfrak r_t$ of the distance to the region and the time $\mathfrak s_t$ of the sprint, we have to balance and optimise a number of crucial quantities: the number of sites in the ball $B_{\mathfrak r_t}(\cO)$ around $\cO$ with radius $\mathfrak r_t$, the scale of the maximal value of the potential within that ball, the probability to reach that ball within time $\mathfrak s_t$, and the gain from the Feynman-Kac formula from staying in that ball during $t-\mathfrak s_t$ time units. One key ingredient is the well-known fact that the maximum of $m$ independent random variables satisfying Assumption (DE) is asymptotically equal to $h_m \approx \varrho \log\log m$ for large $m$. Another key ingredient is that $B_{\mathfrak r_t}(\cO)$ has approximately $\e^{\mathfrak r_t  \vartheta}$ vertices (see \eqref{e:volumerateGW}). Hence, this ball contains values of the potential of height $\approx h_{\e^{\mathfrak r_t  \vartheta}}\approx \varrho\log\mathfrak (r_t \vartheta)$, not just at one vertex but on a cluster of vertices of arbitrary finite size. The contribution from staying in such as cluster during $\approx t$ time units yields the first term of the asymptotics, where we still need to identify $\mathfrak r_t$. A slightly more precise calculation, involving the probabilistic cost to run within $\mathfrak s_t$ time units over $\mathfrak r_t$ space units and to afterwards gain a mass of size $(t-\mathfrak s_t) \varrho\log\mathfrak (r_t \vartheta)$, reveals that the optimal time is $\mathfrak s_t\approx \mathfrak{r}_t/\varrho\log \mathfrak{r}_t$. Optimising this together with the first term $\varrho\log\mathfrak (r_t \vartheta)$ over $\mathfrak r_t$, we see that the optimal distance is $\mathfrak r_t=\varrho t/\log\log t$. The term $-\varrho$ comes from the probability of making $\mathfrak{r}_t$ steps within $\mathfrak s_t=\mathfrak{r}_t/\varrho\log \mathfrak{r}_t$ time units. 

\paragraph{$\bullet$ \bf Third term.}
The variational formula $\widetilde\chi_G(\varrho)$ describes the second-order asymptotics of the gain of the random walk from staying $\approx t$ time units in an optimal local region (the first-order term has already been identified as $\varrho\log\mathfrak (r_t \vartheta)$). Indeed, pick some finite tree $T$ that is admissible, i.e., has positive probability to occur locally in the graph $G=\GW$. Many copies of $T$ occur disjointly with positive density in $G$. In particular, they appear within the ball $B_{\mathfrak r_t}(\cO)$ a number of times that is proportional to the volume of the ball. By standard extreme-value analysis, on one of these many copies of $T$ the random potential achieves an approximately optimal height ($\approx \varrho\log\mathfrak (r_t \vartheta)$) and shape. The optimality of the shape is measured in terms of the negative local Dirichlet eigenvalue $-\lambda_T(\xi)$ of $\Delta_G+\xi$ inside $T$. The shapes $q$ that $\xi$ can assume locally are those that have a large-deviation rate value $\LL(q)=\sum_{x}\ee^{q(x)/\varrho}$ at most 1 (note that $\LL(q)$ measures the probabilistic cost of the shape $q$ on an exponential scale). All allowed shapes $q$ are present locally at some location inside the ball $B_{\mathfrak r_t}(\cO)$ for large $t$. Each of these locations can be used by the random walk as an intermittent island. Optimising over all allowed shapes $q$, we see that the second-order term of the long stay in that island must indeed be expressed by the term
\begin{equation}
\sup_{q\colon \LL(q)\leq 1} [-\lambda_T(q)].
\end{equation}
When $T$ is appropriately chosen, this number is close to the number $\widetilde \chi(\varrho)$ defined in \eqref{e:deftildechi} (cf.\ Proposition~\ref{p:dualrepchi}). This completes the heuristic explanation of the asymptotics in \eqref{e:QLyapGWT}.

\medskip\noindent
$\blacktriangleright$ Configuration Model.

\noindent
The analogous assertion for the configuration model in \eqref{e:QLyapCM} is understood in the same way, ignoring the fact that the graph is now finite, and that size and time are coupled. As to the additional growth constraint on $t_n\log t_n$ in Theorem \ref{t:QLyapCM}: its role is to guarantee that the ball $B_{\mathfrak r_{t_n}}(\cO)$ is small enough to contain no loop with high probability. In fact, this ball is very close in distribution to the same ball in an associated Galton-Watson tree (cf.\ Proposition~\ref{p:couplingCMGW}), which allows us to carry over our result. 

\paragraph{\bf Minimal degree tree is optimal.}
What is a heuristic explanation for our result in Theorem \ref{t:tildechilargerho} that the optimal tree is an infinitely large homogeneous tree of minimal degree $d_{\rm min}$ at every vertex? The first term in \eqref{e:defchiG}, the quadratic form associated with the Laplacian, has a spread-out effect. Apparently, the self-attractive effect of the second term is not strong enough to cope with this, as the super-linear function $p\mapsto p\log p$ in the definition of $J_V$ in \eqref{e:defIJ} is \lq weakly superlinear\rq. This suggests that the optimal structure should be infinitely large (also on $\Z^d$ the optimal profile is positive anywhere in the ambient space $\Z^d$). The first term is obviously monotone in the degree, which explains why the infinite tree with minimal degree optimises the formula. 

\paragraph{\bf Hurdles.}
The exponential growth of the graph poses a number of technical difficulties that are not present for the PAM on $\Z^d$ or $\R^d$. Indeed, one of the crucial points in the proof of the upper bound for the large-time asymptotics is to restrict the infinite graph $G$ to some finite but time-dependent subgraph (in our case the ball $B_{\mathfrak r_t}(\cO)$). On $\Z^d$, a reflection technique that folds $\Z^d$ into a box of an appropriate size gives an upper bound at the cost of a negligible boundary term. For exponentially growing graphs, however, this technique can no longer be used because the boundary of a large ball is comparable in size to the volume of the ball. Therefore we need to employ and adapt an intricate method developed on $\Z^d$ for deriving deeper properties of the PAM, namely, Poisson point process convergence of all the top eigenvalue-eigenvector pairs and asymptotic concentration in a single island. This method relies on certain \emph{path expansions}, which are developed in Section \ref{s:pathexpansions} and rely on ideas from \cite{BKS18, MP16}.


\subsection{Open questions}
\label{ss:openq}

We discuss next a few natural questions for future investigation.

\paragraph{\bf Unbounded degrees.}
A central assumption used virtually throughout in the paper is that of a uniformly bounded degree for the vertices of the graph. While this assumption can certainly be weakened, doing so would require a careful analysis of many interconnected technical arguments involving both the geometry of the graph and the behaviour the random walk.  An inspection of our proofs will reveal that some mild growth of the maximal degree with the volume is allowed, although this would not address the real issues at hand and would therefore be far from optimal. For this reason we prefer to leave unbounded degrees for future work.

\paragraph{\bf Small $\varrho$.} 
The question of whether Theorem~\ref{t:tildechilargerho} is still true when $\varrho < 1/\log(d_{\min} +1)$ seems to us not clear at all, and in fact interesting. Indeed, the analogous variational problem in $\Z^d$ was analysed in \cite{GdH99} and was shown to be highly non-trivial for small $\varrho$.

\paragraph{\bf Different time scales.}
In a fixed finite graph, the PAM can be shown to localise for large times on the site that maximises the potential. It is reasonable to expect the same when the graph is allowed to grow but only very slowly in comparison to the time window considered, leading to a behaviour very different from that shown in Theorem~\ref{t:QLyapCM}. A more exciting and still widely open question is whether there could be other growth regimes between graph size and time that would lead to new asymptotic behaviours. We expect that Theorem~\ref{t:QLyapCM} would still hold for times well above the time cutoff given. For investigations of a similar flavour we direct the reader to \cite{AGH16, FM90}.

\paragraph{\bf Annealing.}
In the present paper we only consider the \emph{quenched} setting, i.e., statements that hold almost-surely or with high probability with respect to the law of both the random graph and the random potential. There are three possible \emph{annealed} settings, where we would average over one or both of these laws. Such settings would certainly lead to different growth scales for the total mass, corresponding to new probabilities to observe local structures in the graph and/or the potential. The variational problems could be potentially different, but for double-exponential tails comparison with the $\Z^d$ case suggests that they would coincide.


\subsection{Outline}

The remainder of the paper is organised as follows. In Section \ref{s:prep} we collect some basic notations and facts about graphs, spectral objects, alternate representations of the characteristic formula $\chi_G(\varrho)$, and the potential landscape. In Section \ref{s:pathexpansions} we employ a path expansion technique to estimate the contribution to the Feynman-Kac formula coming from certain specific classes of paths. In Section \ref{s:proofoutline} we prove Theorem \ref{t:QLyapGWT}. In Section \ref{s:proofThCM} we prove Theorem \ref{t:QLyapCM}. In Appendix \ref{s:analysischi} we analyse the behavior of the variational formula $\chi_T$ for trees $T$ under certain glueing operations, and prove Theorem \ref{t:tildechilargerho}.


\section{Preliminaries}
\label{s:prep}

In this section we gather some facts that will be useful in the remainder of the paper. In particular, we transfer some basic properties of the potential landscape derived in \cite{BK16} and \cite{BKS18} for the Euclidean-lattice setting to the {\em sparse-random-graph} setting. In Section~\ref{ss:graphs} we describe the classes of graphs we will work with. In Section \ref{ss:specbounds} we derive spectral bounds on the Feynman-Kac formula. In Section~\ref{ss:chi} we provide alternative representations for the constant $\chi$ in \eqref{e:defchiG}. In Section \ref{ss:islands} we obtain estimates on the maximal height of the potential in large balls as well as on the sizes and local eigenvalues of the islands where the potential is close to maximal. In Section \ref{ss:connectivity} we obtain estimates on the heights of the potential seen along self-avoiding paths and on the number of islands where the potential is close to maximal.


\subsection{Graphs}
\label{ss:graphs}

All graphs considered in Section~\ref{s:prep} are simple, connected and undirected, and are either finite or countably infinite. For a graph $G=(V,E)$, we denote by $\dist(x,y) = \dist_G(x,y)$ the graph distance between $x,y \in V$, and by
\begin{equation}
\label{e:defdegree}
\deg(x) = \deg_G(x) := \#\{y \in V \colon\, \{y,x\} \in E\},
\end{equation}
the degree of the vertex $x \in V$. The ball of radius $\ell>0$ around a vertex $x$ is defined as
\begin{equation}
\label{e:defBr}
B_\ell(x) = B^G_\ell(x) := \{y \in V \colon\, \dist_G(y,x) \leq \ell\}.
\end{equation}
For a rooted graph $G=(V, E, \cO)$, the distance to the root is defined as
\begin{equation}
|x| := \dist_G(x,\cO), \qquad x\in V,
\end{equation}
and we set $B_\ell := B_\ell(\cO)$, $L_\ell:= |B_\ell|$.

The classes of graphs that we will consider are as follows. Fix a parameter $d_{\max} \in \N$. For $r \in \N_0 = \N \cup \{0\}$, define
\begin{equation}
\label{e:deffrakGr}
\mathfrak{G}_r := \left\{ \substack{
\text{simple connected undirected rooted graphs }
G = (V,E, \cO) \text{ with } \\ V \text{ finite or countable, } |V| 
\geq r+1 \text{ and } \max_{x \in V} \deg_G(x) \leq d_{\max}} \right\}.
\end{equation}
Note that if $G \in \mathfrak{G}_r$, then $L_r =|B_r| \geq r+1$. Also define
\begin{equation}
\label{e:defGinfinity}
\mathfrak{G}_\infty = \bigcap_{r \in \N_0} \mathfrak{G}_r
= \left\{ \substack{ \text{simple connected undirected rooted graphs }
G = (V,E, \cO) \text{ with } \\ V \text{ countable, } |V| = \infty \text{ and } 
\max_{x \in V} \deg_G(x) \leq d_{\max}} \right\}.
\end{equation}

When dealing with infinite graphs, we will be interested in those that have an \emph{exponential growth}. Thus we define, for $\vartheta > 0$,
\begin{equation}
\label{e:defGinfvartheta}
\mathfrak{G}^{(\vartheta)}_\infty = 
\left\{ G \in \mathfrak{G}_\infty\colon\, \lim_{r \to \infty} \frac{\log L_r }{r} = \vartheta \right\}.
\end{equation}
Note that $\GW \in \mathfrak{G}^{(\vartheta)}_\infty$ almost surely, with $\vartheta$ as in \eqref{e:volumerateGW}.


\subsection{Spectral bounds}
\label{ss:specbounds}

Let $G = (V, E)$ be a simple connected graph with maximal degree $d_{\max}\in\N$, where the vertex set $V$ may be finite or countably infinite. 

We recall the Rayleigh-Ritz formula for the principal eigenvalue of the Anderson Hamiltonian. For $\Lambda \subset V$ and $q\colon\,V \to [-\infty, \infty)$, let $\lambda^{\ssup 1}_\Lambda(q; G)$ denote the largest eigenvalue of the operator $\Delta_G + q$ in $\Lambda$ with Dirichlet boundary conditions on $V\backslash\Lambda$. More precisely,
\begin{equation}
\label{e:RRformula}
\begin{aligned}
\lambda^{\ssup 1}_\Lambda(q;G) := \sup \big\{ \langle (\Delta_G + q) \phi, \phi \rangle_{\ell^2(V)} 
\colon\, \phi \in \R^{V}, \,\supp \phi \subset \Lambda, \, \|\phi\|_{\ell^2(V)}=1 \big\}.
\end{aligned}
\end{equation}
We will often omit the superscript ``$(1)$'', i.e., write $\lambda_\Lambda(q;G) = \lambda^{\ssup 1}_\Lambda(q;G)$, and abbreviate $\lambda_G(q) := \lambda_V(q; G)$. When there is no risk of confusion, we may also suppress $G$ from the notation, and omit $q$ when $q = \xi$.

Here are some straightforward consequences of the Rayleigh-Ritz formula:
\begin{enumerate}
\item 
For any $\Gamma \subset \Lambda$, 
\begin{equation}
\label{e:monot_princev}
\max_{z \in \Gamma} q(z) - d_{\max} \le \lambda^{\ssup 1}_\Gamma(q;G) 
\le \lambda^{\ssup 1}_\Lambda(q;G) \le \max_{z \in \Lambda} q(z).
\end{equation}
\item 
The eigenfunction corresponding to $\lambda^{\ssup 1}_\Lambda(q;G)$ can be taken to be non-negative.
\item 
If $q$ is real-valued and $\Gamma \subsetneq \Lambda$ are finite and connected in $G$, then the middle inequality in \eqref{e:monot_princev} is strict and the non-negative eigenfunction corresponding to $\lambda^{\ssup 1}_\Lambda(q;G)$ is strictly positive.
\end{enumerate}
In what follows we state some spectral bounds for the Feynman-Kac formula. These bounds are deterministic, i.e., they hold for any fixed realisation of the potential $\xi \in \R^{V}$.

Inside $G$, fix a finite connected subset $\Lambda \subset V$, and let $H_\Lambda$ denote the Anderson Hamiltonian in $\Lambda$ with zero Dirichlet boundary conditions on $\Lambda^c = V \backslash \Lambda$ (i.e., the restriction of the operator $H_G = \Delta_G + \xi$ to the class of functions supported on $\Lambda$). For $y \in \Lambda$, let $u^y_\Lambda$ be the solution of
\begin{equation}
\label{e:PAM}
\begin{array}{llll}
\partial_t u(x,t) &=& (H_{\Lambda} u)(x,t), &x \in \Lambda,\,t>0,\\
u(x,0) &=& \1_y(x), &x \in \Lambda,\\
\end{array}
\end{equation}
and set $U^y_\Lambda(t) := \sum_{x \in \Lambda} u^y_\Lambda(x,t)$. The solution admits the Feynman-Kac representation
\begin{equation}
\label{e:FKformula}
u^y_\Lambda(x,t) = \E_y \left[ \exp \left\{\int_0^t \xi(X_s) \textd s \right\} 
\1 \{\tau_{\Lambda^{\cc}}>t, X_t = x\} \right],
\end{equation}
where $\tau_{\Lambda^\cc}$ is the hitting time of $\Lambda^\cc$. It also admits the spectral representation
\begin{equation}
\label{e:specrepr}
u^y_\Lambda(x,t) = \sum_{k=1}^{|\Lambda|} \texte^{t \lambda^{\ssup k}_\Lambda} 
\phi_\Lambda^{\ssup k}(y) \phi^{\ssup k}_\Lambda(x),
\end{equation}
where $\lambda^{\ssup 1}_\Lambda \ge \lambda^{\ssup 2}_\Lambda \ge \cdots \ge \lambda^{\ssup{|\Lambda|}}_\Lambda$ and $\phi^{\ssup 1}_\Lambda, \phi^{\ssup 2}_\Lambda, \ldots, \phi^{\ssup{|\Lambda|}}_\Lambda$ are, respectively, the eigenvalues and the corresponding orthonormal eigenfunctions of $H_\Lambda$. These two representations may be exploited to obtain bounds for one in terms of the other, as shown by the following lemma.

\begin{lemma}{\bf [Bounds on the solution]}
\label{l:bounds_mass}
For any $y \in \Lambda$ and any $t > 0$,
\begin{multline}
\label{e:bounds_mass}
\qquad
\texte^{t \lambda^{\ssup 1}_\Lambda} \phi^{\ssup 1}_\Lambda(y)^2 
\le \E_y \left[ \texte^{\int_0^t \xi(X_s) \textd s} \1_{\{\tau_{\Lambda^\cc} > t, X_t = y\}} \right] \\
\le \E_y \left[ \texte^{\int_0^t \xi(X_s) \textd s} \1_{\{\tau_{\Lambda^\cc} > t\}} \right]
\le \texte^{t \lambda^{\ssup 1}_\Lambda} |\Lambda|^{1/2}.
\qquad
\end{multline}
\end{lemma}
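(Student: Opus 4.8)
The plan is to prove the three inequalities in \eqref{e:bounds_mass} by combining the spectral representation \eqref{e:specrepr} with the Feynman-Kac representation \eqref{e:FKformula} and the orthonormality of the eigenfunctions. The middle inequality is trivial: it just drops the indicator $\1\{X_t = y\}$ from a non-negative integrand, so $\E_y[\texte^{\int_0^t \xi(X_s)\,\textd s}\1_{\{\tau_{\Lambda^\cc}>t,\,X_t=y\}}] \le \E_y[\texte^{\int_0^t \xi(X_s)\,\textd s}\1_{\{\tau_{\Lambda^\cc}>t\}}]$.

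For the lower bound, I would identify $\E_y[\texte^{\int_0^t \xi(X_s)\,\textd s}\1_{\{\tau_{\Lambda^\cc}>t,\,X_t=y\}}] = u^y_\Lambda(y,t)$ via \eqref{e:FKformula} with $x=y$, and then expand using \eqref{e:specrepr}: $u^y_\Lambda(y,t) = \sum_{k=1}^{|\Lambda|} \texte^{t\lambda^{\ssup k}_\Lambda}\phi^{\ssup k}_\Lambda(y)^2$. Every summand is non-negative (each $\texte^{t\lambda^{\ssup k}_\Lambda}\phi^{\ssup k}_\Lambda(y)^2 \ge 0$), so the sum is bounded below by its $k=1$ term, giving $\texte^{t\lambda^{\ssup 1}_\Lambda}\phi^{\ssup 1}_\Lambda(y)^2$. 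This establishes the first inequality.

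For the upper bound, I would write $\E_y[\texte^{\int_0^t \xi(X_s)\,\textd s}\1_{\{\tau_{\Lambda^\cc}>t\}}] = U^y_\Lambda(t) = \sum_{x\in\Lambda} u^y_\Lambda(x,t) = \sum_{x\in\Lambda}\sum_{k=1}^{|\Lambda|}\texte^{t\lambda^{\ssup k}_\Lambda}\phi^{\ssup k}_\Lambda(y)\phi^{\ssup k}_\Lambda(x)$. Swapping sums and using $\texte^{t\lambda^{\ssup k}_\Lambda}\le\texte^{t\lambda^{\ssup 1}_\Lambda}$ together with $|\phi^{\ssup k}_\Lambda(y)\sum_{x}\phi^{\ssup k}_\Lambda(x)| \le |\phi^{\ssup k}_\Lambda(y)|\,\|\phi^{\ssup k}_\Lambda\|_{\ell^1(\Lambda)} \le |\phi^{\ssup k}_\Lambda(y)|\,|\Lambda|^{1/2}\|\phi^{\ssup k}_\Lambda\|_{\ell^2} = |\phi^{\ssup k}_\Lambda(y)|\,|\Lambda|^{1/2}$ (Cauchy-Schwarz on $\Lambda$, plus orthonormality), then Cauchy-Schwarz again over $k$ with $\sum_k \phi^{\ssup k}_\Lambda(y)^2 = 1$ (Parseval applied to $\1_y$), one obtains $U^y_\Lambda(t) \le \texte^{t\lambda^{\ssup 1}_\Lambda}|\Lambda|^{1/2}\bigl(\sum_k \phi^{\ssup k}_\Lambda(y)^2\bigr)^{1/2}|\Lambda|^{1/2}\cdot$... — here I would be slightly more careful: writing $U^y_\Lambda(t) = \langle \texte^{tH_\Lambda}\1_y, \1_\Lambda\rangle \le \texte^{t\lambda^{\ssup 1}_\Lambda}\|\1_y\|_{\ell^2}\|\1_\Lambda\|_{\ell^2} = \texte^{t\lambda^{\ssup 1}_\Lambda}|\Lambda|^{1/2}$, using that $\texte^{tH_\Lambda}$ has operator norm $\texte^{t\lambda^{\ssup 1}_\Lambda}$ on $\ell^2(\Lambda)$ since $H_\Lambda$ is self-adjoint with top eigenvalue $\lambda^{\ssup 1}_\Lambda$. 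This cleaner route via operator norm avoids any index juggling.

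The only mild subtlety — and the one place to take care — is justifying $\|\texte^{tH_\Lambda}\|_{\ell^2(\Lambda)\to\ell^2(\Lambda)} = \texte^{t\lambda^{\ssup 1}_\Lambda}$, i.e., that $H_\Lambda$ is self-adjoint on the finite-dimensional space $\ell^2(\Lambda)$ with largest eigenvalue $\lambda^{\ssup 1}_\Lambda$; this is immediate from the definition of $H_\Lambda$ as the restriction of the symmetric operator $\Delta_G + \xi$ and from \eqref{e:specrepr}. Since $\Lambda$ is finite all sums are finite and no convergence issues arise, so no real obstacle is expected.
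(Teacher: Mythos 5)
Your proof is correct and follows essentially the same route as the paper's: the lower bound drops all but the $k=1$ term in the spectral expansion of $u^y_\Lambda(y,t)$, and the upper bound is Parseval/Cauchy–Schwarz in the form $U^y_\Lambda(t)=\langle \texte^{tH_\Lambda}\1_y,\1_\Lambda\rangle\le \texte^{t\lambda^{\ssup1}_\Lambda}\|\1_y\|_2\|\1_\Lambda\|_2$. The operator-norm phrasing you settle on at the end is the cleanest way to package what the authors call ``a suitable application of Parseval's identity,'' and the middle inequality is indeed just monotonicity of the expectation.
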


\begin{proof}
The first and third inequalities follow from \twoeqref{e:FKformula}{e:specrepr} after a suitable application of Parseval's identity. The second inequality is elementary.
\end{proof}

The next lemma bounds the Feynman-Kac formula integrated up to an exit time.

\begin{lemma}{\bf [Mass up to an exit time]}
\label{l:mass_out}
For any $y \in \Lambda$ and $ \gamma  > \lambda^{\ssup 1}_\Lambda$,
\begin{equation}
\label{e:mass_out}
\E_y \left[ \exp \left\{ \int_0^{\tau_{\Lambda^\cc}} (\xi(X_s) -  \gamma )\, \textd s \right\} \right] 
\le 1 + \frac{d_{\max}|\Lambda|}{ \gamma  - \lambda^{\ssup 1}_\Lambda}.
\end{equation}
\end{lemma}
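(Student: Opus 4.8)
The plan is to expand the expectation on the left-hand side of \eqref{e:mass_out} as a series over the number of jumps of the random walk $X$ before it exits $\Lambda$, and to recognise each term as an iterate of the resolvent-type operator associated with $\Delta_G + \xi - \gamma$. Concretely, write $\tau = \tau_{\Lambda^\cc}$ and let $0 = \sigma_0 < \sigma_1 < \sigma_2 < \cdots$ be the jump times of $X$, with $N_\tau$ the number of jumps strictly before $\tau$. Since jumps occur at rate $\deg_G(X_s) \le d_{\max}$ and each holding interval is exponential, one can condition on the embedded discrete-time walk and its holding times; integrating out the last (partial) holding interval $[\sigma_{N_\tau}, \tau)$ and each full holding interval against $\mathrm e^{(\xi(\cdot) - \gamma)s}$ produces, at each visited vertex $z$, a factor $\frac{\deg_G(z)}{\deg_G(z) + \gamma - \xi(z)}$ for a completed step and $\frac{1}{\deg_G(z) + \gamma - \xi(z)}$ for the final incomplete one, provided $\gamma - \xi(z) + \deg_G(z) > 0$ throughout — which is guaranteed because $\gamma > \lambda^{\ssup 1}_\Lambda \ge \max_{z\in\Lambda} q(z) - d_{\max} \ge \xi(z) - d_{\max}$ on $\Lambda$ by \eqref{e:monot_princev}.

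Carrying this out gives the representation
\begin{equation}
\label{e:massout_expansion}
\E_y \left[ \exp \left\{ \int_0^{\tau} (\xi(X_s) - \gamma)\, \textd s \right\} \right]
= \sum_{n=0}^{\infty} \big( (G_\gamma)^n \mathbf 1_\Lambda \big)(y),
\end{equation}
where $G_\gamma$ is the substochastic-type kernel on $\Lambda$ obtained from one step of the walk killed on $\Lambda^\cc$, weighted by the potential. The cleanest way to bound the tail is to note that $G_\gamma$ is, up to conjugation by a positive diagonal matrix, similar to $(\gamma I - H_\Lambda)^{-1}(\text{something bounded})$; more efficiently, one estimates the operator norm of $G_\gamma$ on a weighted $\ell^\infty$ or $\ell^2$ space. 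Using the Rayleigh--Ritz bound $\lambda^{\ssup 1}_\Lambda \le \gamma$, the operator $\gamma - H_\Lambda$ is positive definite on $\ell^2(\Lambda)$, and a Neumann-series / geometric-series estimate shows the sum in \eqref{e:massout_expansion} is finite. To get the explicit bound $1 + \frac{d_{\max}|\Lambda|}{\gamma - \lambda^{\ssup 1}_\Lambda}$, separate the $n = 0$ term (which equals $1$) from the rest: $\sum_{n \ge 1} (G_\gamma)^n \mathbf 1_\Lambda = G_\gamma \big( \sum_{n\ge 0} (G_\gamma)^n \mathbf 1_\Lambda \big) = G_\gamma \, (\gamma - H_\Lambda)^{-1}(\cdots)$, and bound this using $\| (\gamma - H_\Lambda)^{-1} \|_{\ell^2 \to \ell^2} \le (\gamma - \lambda^{\ssup 1}_\Lambda)^{-1}$ together with crude $\ell^1$--$\ell^2$ and $\ell^2$--$\ell^\infty$ estimates that each cost a factor $|\Lambda|^{1/2}$, and the fact that a single application of the jump kernel contributes at most $d_{\max}$. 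A shorter route, avoiding norm bookkeeping: observe that $v(y) := \E_y[\int_0^\tau \mathrm e^{\int_0^s(\xi - \gamma)}\textd s \cdot (\text{exit indicator})]$ and the full expectation $w(y)$ solve, by the Markov property at the first jump, a linear system $w = \mathbf 1_\Lambda + G_\gamma w$, i.e.\ $(\gamma - H_\Lambda) \tilde w = \deg \cdot \mathbf 1_\Lambda$ after the diagonal conjugation, whence $\tilde w = (\gamma - H_\Lambda)^{-1}(\deg\,\mathbf 1_\Lambda)$ and the bound follows from $\langle \mathbf 1_\Lambda, (\gamma - H_\Lambda)^{-1} \mathbf 1_\Lambda\rangle \le |\Lambda|/(\gamma - \lambda^{\ssup 1}_\Lambda)$.

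The main obstacle is bookkeeping rather than conceptual: one must be careful that the expectation is genuinely finite before manipulating it (otherwise the identity $w = \mathbf 1 + G_\gamma w$ is vacuous), and that the diagonal conjugation relating $G_\gamma$ to the resolvent of $H_\Lambda = \Delta_G + \xi$ is set up correctly — the Laplacian $\Delta_G$ here has off-diagonal entries $1$ and diagonal $-\deg_G(x)$, so the jump kernel of $X$ is $\Delta_G + \deg_G$ (as a matrix), and tracking the $\deg_G$ factors is where sign errors can creep in. Finiteness can be secured first by a soft argument: truncate at $N$ jumps, bound the truncated sum uniformly using $\gamma > \lambda^{\ssup 1}_\Lambda$, then pass to the limit by monotone convergence. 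Once finiteness is in hand, the linear equation $w = \mathbf 1_\Lambda + G_\gamma w$ plus the spectral bound on $(\gamma - H_\Lambda)^{-1}$ yields \eqref{e:mass_out} directly.
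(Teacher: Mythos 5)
The paper's own ``proof'' is a bare citation to \cite[Lemma~4.2]{GKM07}, so your attempt to give a self-contained argument is a genuinely different contribution, and the underlying mechanism you propose --- condition on the first jump, read off a resolvent identity, then invoke the spectral bound $\|(\gamma - H_\Lambda)^{-1}\|_{\ell^2\to\ell^2} \le (\gamma - \lambda^{\ssup 1}_\Lambda)^{-1}$ --- is indeed the right one (and is essentially what GKM07 do). Two of your specific assertions, however, are off and should be corrected.

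First, the linear system. Writing $w(y) := \E_y\bigl[\exp\int_0^{\tau_{\Lambda^\cc}}(\xi-\gamma)\bigr]$ and conditioning on the first holding time $T_1\sim\mathrm{Exp}(\deg(y))$ and the first jump destination gives, for $y\in\Lambda$,
\[
\bigl(\deg(y)+\gamma-\xi(y)\bigr)\,w(y)\;=\;\sum_{z\sim y,\,z\in\Lambda} w(z)\;+\;b(y),
\qquad b(y):=\bigl|\{z\sim y:\,z\notin\Lambda\}\bigr|,
\]
i.e.\ $(\gamma - H_\Lambda)w = b$, not $w=\mathbf 1_\Lambda + G_\gamma w$ and not $(\gamma - H_\Lambda)\tilde w = \deg\cdot\mathbf 1_\Lambda$. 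The inhomogeneity is the boundary term $b$ (how many edges lead directly out of $\Lambda$), which happens to satisfy $b\le d_{\max}\mathbf 1_\Lambda$ but is not $\deg$ and not tied to $\mathbf 1_\Lambda$ in the way your identity asserts. The fix is harmless: $\gamma I - H_\Lambda$ is a symmetric positive-definite matrix with non-positive off-diagonal entries, hence an M-matrix, so $(\gamma - H_\Lambda)^{-1}$ has non-negative entries; therefore $w = (\gamma-H_\Lambda)^{-1}b \le d_{\max}(\gamma - H_\Lambda)^{-1}\mathbf 1_\Lambda$ entrywise, and $w(y) \le d_{\max}\,\|\delta_y\|_2\,\|(\gamma - H_\Lambda)^{-1}\|\,\|\mathbf 1_\Lambda\|_2 = d_{\max}\sqrt{|\Lambda|}/(\gamma - \lambda^{\ssup 1}_\Lambda)$, which is even slightly stronger than \eqref{e:mass_out} (you only pay one factor $|\Lambda|^{1/2}$, not two).

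Second, the convergence of the single-step exponential integral needs $\gamma + \deg(y) > \xi(y)$, but the chain you quote from \eqref{e:monot_princev} only yields $\gamma + d_{\max} > \xi(y)$, which does not imply the former when $\deg(y) < d_{\max}$. The correct a priori bound is $\lambda^{\ssup 1}_\Lambda \ge \xi(y) - \deg(y)$ for every $y\in\Lambda$, obtained from the Rayleigh--Ritz formula \eqref{e:RRformula} by testing with $\phi = \delta_y$; this gives exactly what you need. With both corrections your argument goes through; as you note, establishing finiteness of $w$ first by truncating at $N$ jumps and passing to the limit is the right way to make the manipulation of the identity rigorous.
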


\begin{proof}
See \cite[Lemma 4.2]{GKM07}.
\end{proof}


\subsection{About the constant $\chi$}
\label{ss:chi}

We next introduce alternative representations for $\chi$ in \eqref{e:defchiG} in terms of a \lq dual\rq\ variational formula. Fix $\varrho \in (0,\infty)$ and a graph $G=(V,E)$. The functional
\begin{equation}
\label{e:cL}
\cL_V(q;\varrho) := \sum_{x \in V} \ee^{q(x)/\varrho}\in[0,\infty], \qquad q\colon\, V \to [-\infty,\infty), 
\end{equation}
plays the role of a large deviation rate function for the potential $\xi$ in $V$ (compare with \eqref{e:DE}). 
Henceforth we suppress  the superscript ``$(1)$'' from the notation for the principal eigenvalue \eqref{e:RRformula}, i.e., we write
\begin{equation}
\lambda_{\Lambda}(q;G) = \lambda^{\ssup 1}_{\Lambda}(q;G), \qquad \Lambda \subset V,
\end{equation}
and abbreviate $\lambda_G(q) =  \lambda_V(q; G)$. We also define
\begin{equation}
\label{e:defhatchi}
\widehat{\chi}_{\Lambda}(\varrho; G) := - \sup_{\substack{q\colon V \to [-\infty,\infty), 
\\ \cL_V(q;\varrho) \leq 1 }} \lambda_{\Lambda}(q;G)\in[0,\infty), 
\qquad \widehat{\chi}_G(\varrho) := \widehat{\chi}_V(\varrho;G).
\end{equation}
The condition $\cL_V(q;\varrho) \leq 1$ on the supremum above ensures that the potentials $q$ have a fair probability under the i.i.d.\ double-exponential distribution. Finally, for an infinite rooted graph $G=(V,E, \cO)$, we define
\begin{equation}
\label{e:defchi0}
\chi_G^{\ssup 0}(\rho) := \inf_{r >0} \widehat{\chi}_{B_r}(\varrho; G).
\end{equation}

Both $\chi^{\ssup 0}$ and $\widehat{\chi}$ give different representations for $\chi$.

\begin{proposition}{\bf [Alternative representations for $\chi$]}
\label{p:dualrepchi}
For any graph $G = (V,E)$ and any $\Lambda \subset V$,
\begin{equation}
\widehat{\chi}_\Lambda(\varrho;G) \geq \widehat{\chi}_V(\varrho; G) = \widehat{\chi}_G(\varrho) = \chi_G(\varrho).
\end{equation}
If $G = (V,E,\cO) \in \mathfrak{G}_\infty$, then 
\begin{equation}
\chi^{\ssup 0}_G(\varrho) = \lim_{r \to \infty} \widehat{\chi}_{B_r}(\varrho;G) = \chi_G(\varrho).
\end{equation}
\end{proposition}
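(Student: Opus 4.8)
The proof of Proposition~\ref{p:dualrepchi} naturally splits into two parts: (i) the chain of (in)equalities $\widehat{\chi}_\Lambda(\varrho;G) \geq \widehat{\chi}_V(\varrho;G) = \widehat{\chi}_G(\varrho) = \chi_G(\varrho)$, and (ii) the limit statement $\chi^{\ssup 0}_G(\varrho) = \lim_{r\to\infty}\widehat{\chi}_{B_r}(\varrho;G) = \chi_G(\varrho)$ for $G\in\mathfrak{G}_\infty$. The monotonicity $\widehat{\chi}_\Lambda(\varrho;G)\geq\widehat{\chi}_V(\varrho;G)$ is immediate from the definition \eqref{e:defhatchi}: taking the supremum of $\lambda_\Lambda(q;G)$ over a smaller region $\Lambda\subset V$ can only decrease it, by the first inequality in \eqref{e:monot_princev} (monotonicity of the principal eigenvalue under enlarging the region), so negating reverses the inequality. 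The identity $\widehat{\chi}_V = \widehat{\chi}_G$ is just notational. So the heart of part (i) is the duality $\widehat{\chi}_G(\varrho) = \chi_G(\varrho)$, i.e., that the ``primal'' formula $\inf_p [I_E(p)+\varrho J_V(p)]$ and the ``dual'' formula $-\sup_{\cL_V(q)\leq 1}\lambda_G(q)$ coincide.

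For the duality $\widehat{\chi}_G(\varrho)=\chi_G(\varrho)$, the plan is to exploit the standard bijection between probability measures $p\in\cP(V)$ and admissible potentials $q$ via $q := \varrho\log p$ (with the convention $\log 0 = -\infty$), which is exactly the constraint-saturating substitution since then $\cL_V(q;\varrho) = \sum_x p(x) = 1$. The key computation is that, for a nonnegative $\phi\in\ell^2(V)$ with $\|\phi\|_2 = 1$, writing $p(x) = \phi(x)^2$,
\begin{equation}
\langle(\Delta_G + q)\phi,\phi\rangle = -I_E(p) + \sum_x q(x)\phi(x)^2 = -I_E(p) + \varrho\sum_x \phi(x)^2\log p(x).
\end{equation}
The first term uses the elementary identity $\langle -\Delta_G\phi,\phi\rangle = \sum_{\{x,y\}\in E}(\phi(x)-\phi(y))^2 = I_E(p)$ (with $\phi = \sqrt p$). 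For the upper bound $\widehat\chi_G\leq\chi_G$: given any $p$, test with $\phi = \sqrt p$ and $q = \varrho\log p$ to get $\lambda_G(q)\geq -I_E(p) + \varrho\sum_x p(x)\log p(x) = -[I_E(p)+\varrho J_V(p)]$, hence $\widehat\chi_G \leq I_E(p)+\varrho J_V(p)$; optimise over $p$. For the reverse $\widehat\chi_G\geq\chi_G$: given any admissible $q$ (i.e.\ $\cL_V(q;\varrho)\leq 1$) and any normalised nonnegative $\phi$, set $p(x) := \phi(x)^2$ and use Jensen's inequality / the Gibbs variational principle on $\sum_x \phi(x)^2 q(x) = \varrho\sum_x p(x)\log(e^{q(x)/\varrho})$: namely $\sum_x p(x)\log(e^{q(x)/\varrho}/p(x)) \leq \log\sum_x e^{q(x)/\varrho} = \log\cL_V(q;\varrho)\leq 0$, which rearranges to $\sum_x \phi(x)^2 q(x) \leq \varrho\sum_x p(x)\log p(x) = -\varrho J_V(p)$. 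Combining, $\langle(\Delta_G+q)\phi,\phi\rangle \leq -I_E(p)-\varrho J_V(p) \leq -\chi_G(\varrho)$; take sup over $\phi$ then over $q$.

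For part (ii), the limit statement, the plan is as follows. Since $B_r\uparrow V$ as $r\to\infty$ and $\widehat{\chi}_{B_r}(\varrho;G)$ is nonincreasing in $r$ (by the monotonicity just established) and bounded below by $\widehat{\chi}_V(\varrho;G) = \chi_G(\varrho)$, the limit $\lim_{r\to\infty}\widehat{\chi}_{B_r}(\varrho;G)$ exists and equals $\inf_{r>0}\widehat{\chi}_{B_r}(\varrho;G) = \chi^{\ssup 0}_G(\varrho)$, with $\lim_{r\to\infty}\widehat{\chi}_{B_r}(\varrho;G)\geq\chi_G(\varrho)$. For the matching upper bound $\chi^{\ssup 0}_G(\varrho)\leq\chi_G(\varrho)$, I would take a near-optimal $p\in\cP(V)$ for $\chi_G(\varrho)$ — one with $I_E(p)+\varrho J_V(p)\leq\chi_G(\varrho)+\eps$ — and truncate it to a large ball: let $p_r := p\,\1_{B_r}/p(B_r)$, a probability measure supported on $B_r$. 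As $r\to\infty$, $p(B_r)\to 1$, and one checks by dominated convergence (using $|V|=\infty$ but $p$ summable, and that $t\log t$ is bounded on $[0,1]$) that $I_E(p_r)\to I_E(p)$ and $J_{B_r}(p_r)\to J_V(p)$; some care is needed with the edges crossing $\partial B_r$, but their contribution to $I_E$ is at most $\sum_{x\notin B_r}\deg(x)\,p(x)\cdot(\text{const})$, which vanishes since $\deg\leq d_{\max}$ and $p(B_r^c)\to 0$. Hence, using the part (i) identity applied to the finite graph restriction (i.e.\ $\widehat{\chi}_{B_r}(\varrho;G)\leq I_E(p_r)+\varrho J_{B_r}(p_r)$ via testing with $\phi = \sqrt{p_r}$ and $q = \varrho\log p_r$ supported on $B_r$), we get $\limsup_r\widehat{\chi}_{B_r}(\varrho;G)\leq I_E(p)+\varrho J_V(p)\leq\chi_G(\varrho)+\eps$; let $\eps\downarrow 0$.

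\textbf{Main obstacle.} The routine-but-delicate point is the truncation argument in part (ii): verifying that the edge sum $I_E(p_r)$ converges to $I_E(p)$, since truncation creates new ``boundary terms'' of the form $(\sqrt{p_r(x)}-0)^2$ for $x\in B_r$ adjacent to $B_r^c$. Controlling these requires the bounded-degree hypothesis $G\in\mathfrak{G}_\infty$ (so each vertex contributes at most $d_{\max}$ such terms) together with the summability of $p$; this is where the assumption $G\in\mathfrak{G}_\infty$ is genuinely used (beyond $|V|=\infty$). The duality in part (i), while the conceptual core, is conversely quite standard — it is the discrete analogue of the well-known variational identity relating the ground-state energy of $\Delta+q$ to an entropy-penalised Dirichlet form — and should go through cleanly via the Gibbs variational principle as sketched.
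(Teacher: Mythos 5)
Your proof plan is correct and, for part (ii), follows essentially the same route as the paper: Lemma~\ref{l:chi0large} there also takes a near-optimal $p$, passes to the normalised restriction $p_r = p\,\1_{B_r}/p(B_r)$, and shows $I_E(p_r)+\varrho J_V(p_r) \to I_E(p)+\varrho J_V(p)$ using one-sided algebraic bounds (the paper's estimate of the boundary contribution is $\tfrac{d_{\max}}{2}p(B^c_{r-1})/p(B_r)$; your phrasing of the boundary term is slightly imprecise --- the relevant vertices are those in $B_r\setminus B_{r-1}$, not in $B_r^c$ --- but the idea that summability of $p$ kills the tail is exactly right). The genuine difference is in part (i): the paper disposes of the duality $\widehat{\chi}_\Lambda(\varrho;G) = \inf_{\supp p\subset\Lambda}[I_E(p)+\varrho J_V(p)]$ by citing \cite[Lemma~2.17]{GM98}, whereas you supply a self-contained proof via the substitution $q=\varrho\log p$, $\phi=\sqrt p$ in one direction and the Gibbs/Jensen inequality $\sum_x p(x)\log\bigl(e^{q(x)/\varrho}/p(x)\bigr)\leq\log\cL_V(q;\varrho)$ in the other. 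Your direct argument is correct (in the second direction one should also invoke the standard reduction to nonnegative $\phi$, since $(|\phi(x)|-|\phi(y)|)^2\leq(\phi(x)-\phi(y))^2$ shows the Rayleigh quotient is maximised on nonnegative functions); this makes the proof more self-contained at the cost of reproving a known lemma, which is a reasonable trade-off for a reader who does not have \cite{GM98} to hand. One tiny slip: for the monotonicity $\widehat\chi_\Lambda\geq\widehat\chi_V$ you cite ``the first inequality in \eqref{e:monot_princev},'' but the relevant one is the middle inequality $\lambda^{\ssup1}_\Gamma\leq\lambda^{\ssup1}_\Lambda$ for $\Gamma\subset\Lambda$.
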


\noindent
Proposition~\ref{p:dualrepchi} will be proved in Section~\ref{ss:altrepchi}.


\subsection{Potentials and islands}
\label{ss:islands}

We next consider properties of the potential landscape. Recall that $(\xi(x))_{x \in V}$ are i.i.d.\ double-exponential random variables. Set
\begin{equation}
\label{e:defa_n}
a_L := \varrho \log \log (L \vee \ee^\ee).
\end{equation}
The next lemma shows that $a_{L_r}$ is the leading order of the maximum of $\xi$ in $B_r$.

\begin{lemma}{\bf [Maximum of the potential]}
\label{l:maxpotential}
Fix $r \mapsto g_r >0$ with $\lim_{r \to \infty} g_r = \infty$. Then
\begin{equation}
\label{e:maxpotential}
\sup_{G \in \mathfrak{G}_r} \, 
\Prob \left( \left|\max_{x \in B_r} \xi(x) - a_{L_r} \right| \geq \frac{g_r}{\log L_r} \right) 
\leq \max \left\{ \frac{1}{r^2}, \ee^{-\frac{g_r}{\varrho}} \right\} \qquad \forall\, r > 2 e^2.
\end{equation}
Moreover, for any $\vartheta>0$ and any $G\in \mathfrak{G}^{(\vartheta)}_\infty$, $\Prob$-almost surely eventually as $r \to \infty$,
\begin{equation}
\label{e:asmaxpot}
\left| \max_{x \in B_r} \xi(x) - a_{L_r} \right| \leq \frac{2 \varrho \log r}{\vartheta r}.
\end{equation}
\end{lemma}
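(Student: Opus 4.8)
The plan is to reduce both statements to the standard extreme-value estimate for i.i.d.\ double-exponential variables. First I would record the elementary tail computation: if $M_m := \max_{i=1}^m \zeta_i$ with $\zeta_i$ i.i.d.\ satisfying Assumption~(DE) (so $\Prob(\zeta > u) = \ee^{-\ee^{u/\varrho}}$ for $u$ large), then for $u = a_m + s$ with $a_m = \varrho\log\log m$ and $s$ in a suitable range, $\Prob(\zeta > u) = \ee^{-\ee^{s/\varrho}/\log m}$, hence $m\Prob(\zeta > u) = \ee^{-\ee^{s/\varrho}}\cdot$(correction), and by a union bound $\Prob(M_m > a_m + s) \le m\,\ee^{-\ee^{s/\varrho}/\log m}$, while by independence $\Prob(M_m \le a_m - s) = (1 - \ee^{-\ee^{-s/\varrho}/\log m})^m \le \exp\{-m\ee^{-\ee^{-s/\varrho}/\log m}\}$. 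For $s = g/\log m$ with $g\to\infty$ slowly, $\ee^{\pm s/\varrho} = 1 \pm s/\varrho + O(s^2)$, so $\ee^{s/\varrho}/\log m \approx (1 + g/(\varrho\log m))/\log m$; feeding this in, the upper-deviation bound becomes of order $\ee^{-g/\varrho}$ and the lower-deviation bound of order $\exp\{-m^{\varepsilon}\}$ or better, which is certainly $\le \ee^{-g/\varrho}$ in the relevant regime. One has to be slightly careful that $a_m \pm g/\log m$ stays in the region where the exact tail formula \eqref{e:DE} applies; since $m = L_r \ge r+1 \to \infty$ this holds for $r$ large (the threshold $r > 2\ee^2$ and the truncation $L\vee\ee^\ee$ in \eqref{e:defa_n} are there precisely to handle small $m$).

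For \eqref{e:maxpotential} I would apply the above with $m = L_r = |B_r|$. The subtlety is the supremum over $G \in \mathfrak{G}_r$: here $|B_r|$ is not fixed, but ranges over $\{r+1, r+2, \dots\}$ (finite graphs in $\mathfrak{G}_r$ can be arbitrarily large, or countably infinite). I would handle this by monotonicity in two steps. The function $L\mapsto a_L$ is increasing, and the event in \eqref{e:maxpotential} concerns $|\max_{B_r}\xi - a_{L_r}|$, a quantity whose law depends on $G$ only through $L_r = |B_r|$ (the variables are i.i.d., so only the cardinality matters). So it suffices to bound $\sup_{m \ge r+1}\Prob(|M_m - a_m| \ge g_r/\log L_r)$ — but note the normalising $\log L_r$ in the deviation also depends on $m$. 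Writing $L_r = m$, for each fixed $m \ge r+1$ the one-$m$ bound gives $\le \ee^{-g_r/\varrho}$ for the upper tail and something super-polynomially small for the lower tail, and I must check these are uniform in $m \ge r+1$: the upper-tail bound $m\,\ee^{-\ee^{s/\varrho}/\log m}$ with $s = g_r/\log m$ is, after the expansion, essentially $\ee^{-g_r/\varrho}$ times a factor that is uniformly bounded for $m$ large; for $m$ in an intermediate range one uses the $1/r^2$ alternative in the max on the right-hand side. This uniformity is the step I expect to require the most care — tracking the $m$-dependence of both the location $a_m$ and the window $g_r/\log m$ simultaneously — but it is a routine if slightly fiddly real-variable estimate.

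For the almost-sure statement \eqref{e:asmaxpot} I would fix $G \in \mathfrak{G}^{(\vartheta)}_\infty$, so $L_r = |B_r|$ with $\log L_r / r \to \vartheta$, hence $\log L_r \ge (\vartheta/2) r$ for $r$ large and $\log\log L_r = \log r + \log\vartheta + o(1)$. Apply \eqref{e:maxpotential} — or rather its proof, directly with the now-deterministic value $L_r$ — with $g_r$ chosen so that $g_r/\log L_r$ matches the target $2\varrho\log r/(\vartheta r)$, i.e.\ $g_r \asymp (2\varrho/\vartheta)(\log r)(\log L_r)/r \asymp 2\varrho\log r$ up to constants; then $\ee^{-g_r/\varrho} \asymp r^{-2}$ (with the constant $2$ giving exactly the exponent needed), and $\max\{r^{-2}, \ee^{-g_r/\varrho}\}$ is summable in $r$. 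Borel–Cantelli then yields that $|\max_{B_r}\xi - a_{L_r}| \le 2\varrho\log r/(\vartheta r)$ eventually, $\Prob$-a.s. I would double-check the constant: the cleanest route is to pick $g_r = (2-\delta)\varrho\log r$ for the Borel–Cantelli summability and verify that $g_r/\log L_r \le 2\varrho\log r/(\vartheta r)$ eventually using $\log L_r \ge (\vartheta/2)r\cdot(1-o(1))$ — actually $\log L_r = \vartheta r(1+o(1))$ gives $g_r/\log L_r = (2-\delta)\varrho\log r/(\vartheta r)(1+o(1)) \le 2\varrho\log r/(\vartheta r)$, so any $\delta > 0$ works and the stated constant $2$ is comfortably attained. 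The main obstacle throughout is purely bookkeeping of the lower-order corrections in the double-exponential tail expansion; there is no conceptual difficulty.
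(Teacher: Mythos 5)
Your plan takes essentially the same route as the paper's proof: direct two-sided tail estimates from the exact double-exponential tail, uniformity over $G \in \mathfrak{G}_r$ reduced to the observation that the law of $\max_{B_r}\xi$ depends on $G$ only through $L_r = |B_r| \ge r+1$, and Borel--Cantelli for \eqref{e:asmaxpot}. However, the core tail formula you write down is inverted: with $u = a_m + s$ one has $\ee^{u/\varrho} = (\log m)\,\ee^{s/\varrho}$, so $\Prob(\zeta > u) = \exp\{-(\log m)\,\ee^{s/\varrho}\}$, not $\exp\{-\ee^{s/\varrho}/\log m\}$ (and the same slip propagates into your lower-tail formula). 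Once corrected, the upper tail is actually cleaner than your sketch suggests: $m\,\Prob(\zeta > u) = \exp\{(\log m)(1-\ee^{s/\varrho})\} \le \exp\{-(\log m)\,s/\varrho\} = \ee^{-g/\varrho}$ for $s = g/\log m$, using only $1-\ee^x \le -x$, and this holds for every $m$ with no correction factor whatsoever, so the uniformity over $m \ge r+1$ that you flag as ``slightly fiddly'' is in fact immediate.

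The lower tail is where the real calibration sits, and your description of the bound as ``of order $\exp\{-m^\varepsilon\}$'' is not correct when $g$ is small relative to $\log m$: the corrected bound is $\exp\{-\ee^{(\log m)(1-\ee^{-g/(\varrho\log m)})}\}$, and for fixed $g$ as $m\to\infty$ the inner exponent tends to the constant $g/\varrho$, so the bound converges to $\exp\{-\ee^{g/\varrho}\}$ rather than vanishing polynomially in $m$. Showing this is $\le \ee^{-g/\varrho}$ uniformly requires first restricting to $g_r \le 2\varrho\log r$ -- which is the actual purpose of the $r^{-2}$ alternative on the right-hand side of \eqref{e:maxpotential}, rather than the ``intermediate $m$'' role you assign it -- and then a small numeric check such as $\tfrac{1-\ee^{-x}}{x} \ge \tfrac{1-\ee^{-2}}{2}$ on $(0,2]$ combined with $\ee^{cy} \ge y$ for $c > 1/\ee$. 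This is what the paper's intermediate estimate $\ee^{-r\log r/(\ee^2\varrho)}$ is encoding. With these two corrections your argument goes through, and the Borel--Cantelli step for \eqref{e:asmaxpot}, including the choice $g_r \asymp 2\varrho\log r$ and the use of $\log L_r \sim \vartheta r$, matches the paper.
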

\begin{proof}
Without loss of generality, we may assume that $g_r \leq 2 \varrho \log r$. Fix $G \in \mathfrak{G}_r$ and estimate
\begin{equation}
\label{e:prmax1}
\Prob\left(\max_{x \in B_n} \xi(x) \leq a_{L_r}-\frac{g_r}{\log L_r} \right) 
= \ee^{-\frac{1}{\varrho}L_r (\log L_r) \ee^{-\frac{g_r}{\varrho \log L_r}} }
\leq \ee^{-\frac{r \log r}{\ee^2 \varrho}}
\leq \ee^{-\frac{g_r}{\varrho}},
\end{equation}
provided $r>2\ee^2$. On the other hand, using $\ee^x \geq 1+x$, $x \in \R$, we estimate
\begin{equation}
\label{e:prmax2}
\Prob \left(\max_{x \in B_n} \xi(x) \geq a_{L_r}+\frac{g_r}{\log r} \right) 
= 1 - \left(1 - \ee^{-\ee^{\log \log L_r + \frac{g_r}{\varrho \log r}}} \right)^{L_r} \leq \ee^{-\frac{g_r}{\varrho}}.
\end{equation}
Noting that the bounds above do not depend on $G$, so the case $G \in \mathfrak{G}_r$ is concluded.

For the case $G\in \mathfrak{G}^{(\vartheta)}_\infty$, let $g_r := \tfrac32 \varrho \log r$. Note that the right-hand side of \eqref{e:maxpotential} is summable over $r \in \N$, so that, by the Borel-Cantelli lemma,
\begin{equation*}
\left|\max_{x \in B_r} \xi(x) - a_{L_r} \right| < \frac{g_r}{\log L_r} < \frac{2 \varrho \log r}{\vartheta r} 
\qquad \Prob\text{-almost surely eventually as } r \to \infty.
\qedhere
\end{equation*}
\end{proof}

For a fixed rooted graph $G=(V,E, \cO) \in \mathfrak{G}_r$, we define sets of high excedances of the potential in $B_r$ as follows. Given $A>0$, let
\begin{equation}
\label{defPi}
\Pi_{r,A} = \Pi_{r,A}(\xi) := \{z \in B_r \colon\, \xi(z) > a_{L_r} - 2A\}
\end{equation}
be the set vertices in $B_r$ where the potential is close to maximal. 
For a fixed $\alpha \in (0,1)$, define
\begin{equation}
\label{e:def_Sr}
S_r := (\log r)^\alpha
\end{equation}
and set
\begin{equation}
\label{def_D_Lambda,A}
D_{r,A} = D_{r,A}(\xi) := \{z \in B_r \colon\, \dist_{G}(z, \Pi_{r,A}) \leq S_r \} \supset \Pi_{r,A},
\end{equation}
i.e., $D_{r,A}$ is the $S_r$-neighbourhood of $\Pi_{r,A}$. Let $\mathfrak{C}_{r,A}$ denote the set of all connected components of $D_{r,A}$ in $G$, which we call \emph{islands}. For $\CC \in \mathfrak{C}_{r,A}$, let
\begin{equation}
\label{defzC}
z_\CC := \textnormal{argmax}\{\xi(z) \colon\, z \in \CC\}
\end{equation}
be the point with highest potential within $\CC$. Since $\xi(0)$ has a continuous law, $z_\CC$ is $\Prob$-a.s.\ well defined
for all $\CC \in \mathfrak{C}_{r,A}$.

The next lemma gathers some useful properties of $\mathfrak{C}_{r,A}$.

\begin{lemma}{\bf [Maximum size of the islands]}
\label{l:size_comps}
For every $A > 0$, there exists $M_A \in \N$ such that the following holds. For a graph $G \in \mathfrak{G}_r$, define the event
\begin{equation}
\BB_r := \big\{ \exists\, \CC \in \mathfrak{C}_{r,A} \text{ with } |\CC \cap \Pi_{r,A}|>M_A \big\}.
\end{equation}
Then $\sum_{r \in \N_0} \sup_{G \in \mathfrak{G}_r} \Prob(\BB_r) < \infty$. In particular,
\begin{equation}
\lim_{r \to \infty} \sup_{G \in \mathfrak{G}_r} \Prob(\BB_r) = 0,
\end{equation}
and, for any fixed $G \in \mathfrak{G}_\infty$, $\Prob$-almost surely eventually as $r \to \infty$, $\BB_r$ does not occur.
Note that
\begin{equation}
\text{on $\BB_r^\cc$ all $\CC \in \mathfrak{C}_{r,A}$ satisfy:
$|\CC \cap \Pi_{r,A}| \leq M_A$, $\diam_G(\CC) \leq 2M_A S_r$, $|\CC| \leq M_A d_{\max}^{S_r}$.}
\end{equation}
\end{lemma}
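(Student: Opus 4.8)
The plan is to show that the number of high-excedance points inside a fixed island is tight, uniformly over the graph class $\mathfrak{G}_r$, by exploiting the sparsity of the exceedances and a counting bound over connected subsets. First I would fix $A>0$ and observe that, by definition, any island $\CC \in \mathfrak{C}_{r,A}$ is the $S_r$-neighbourhood of a connected cluster of points where $\xi(z) > a_{L_r} - 2A$; in particular, if $|\CC \cap \Pi_{r,A}| > M$ then there exist $M$ points of $\Pi_{r,A}$ that are pairwise within graph-distance $2M S_r$ of each other through the island, so they all lie in a single ball $B_{2MS_r}(z)$ for some vertex $z \in B_r$. Crucially, the probability that a \emph{given} set of $M$ vertices all have $\xi > a_{L_r}-2A$ is at most $\big(\Prob(\xi(0)>a_{L_r}-2A)\big)^M$, and since $\Prob(\xi(0)>u)=\e^{-\e^{u/\varrho}}$ for $u$ large, this is comparable to $\big(C_A/(L_r\log L_r)\big)^M$ for a constant $C_A=C_A(A,\varrho)$, using $a_{L_r}=\varrho\log\log(L_r\vee\e^\e)$ and $1-\e^{-x}\le x$.

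Next I would perform a union bound over the possible locations of such a cluster. The number of connected subsets of size $M$ containing a fixed vertex in a graph of maximal degree $d_{\max}$ is at most $(Cd_{\max})^{M}$ for an absolute constant $C$ (a standard combinatorial bound on "lattice animals" / connected subgraphs), and these connected subsets are what one needs, because the $M$ exceedance points, while not themselves adjacent, are joined through the island by paths of bounded length, so one can enclose them in a connected set of size at most $O(M S_r)$ — hence at most $(Cd_{\max})^{O(MS_r)}$ such enclosing sets through any fixed vertex. The total number of candidate starting vertices in $B_r$ is $L_r$. Multiplying, the probability of $\BB_r$ is bounded by
\begin{equation}
\sup_{G\in\mathfrak{G}_r}\Prob(\BB_r)\ \le\ L_r\,(Cd_{\max})^{O(MS_r)}\,\Big(\frac{C_A}{L_r\log L_r}\Big)^{M}\ \le\ (Cd_{\max})^{O(MS_r)}\,\frac{C_A^{M}}{L_r^{M-1}(\log L_r)^{M}}.
\end{equation}
Since $S_r=(\log r)^\alpha$ with $\alpha\in(0,1)$ and $L_r\ge r+1$ for $G\in\mathfrak{G}_r$, the factor $(Cd_{\max})^{O(MS_r)}$ grows only sub-polynomially in $r$ (it is $\e^{O((\log r)^\alpha)}$), while $L_r^{M-1}\ge (r+1)^{M-1}$ grows polynomially. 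Choosing $M=M_A$ large enough that $M_A-1\ge 3$, say, makes the right-hand side summable in $r$; this gives $\sum_{r}\sup_{G\in\mathfrak{G}_r}\Prob(\BB_r)<\infty$, and the remaining assertions follow: the supremum tends to $0$, and Borel--Cantelli gives that for fixed $G\in\mathfrak{G}_\infty$ the event $\BB_r$ eventually fails almost surely. The final displayed bounds on $\BB_r^\cc$ are then immediate from the definitions: $|\CC\cap\Pi_{r,A}|\le M_A$, so $\Pi_{r,A}\cap\CC$ has graph-diameter at most $2M_A S_r$ inside $\CC$ (connecting through the $S_r$-neighbourhoods), hence $\diam_G(\CC)\le 2M_A S_r + 2S_r \le $ (absorb into) $2M_AS_r$ after adjusting $M_A$, and $|\CC|\le M_A\, d_{\max}^{S_r}$ by the trivial volume bound $|B_{S_r}(z)|\le d_{\max}^{S_r}$ applied around each of the at most $M_A$ exceedance points.

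The main obstacle, and the step requiring the most care, is the combinatorial enclosing argument in the second paragraph: one must be careful that the $M$ exceedance points lying in a common island can indeed be covered by a connected vertex set whose size is controlled (order $MS_r$, not order $L_r$), and that the number of such enclosing sets rooted at a fixed vertex is at most exponential in $MS_r$ with base depending only on $d_{\max}$. This uses that within an island consecutive exceedance clusters are within $2S_r$ of each other (by the connectivity of $D_{r,A}$) together with the bounded-degree lattice-animal count; getting the bookkeeping right so that the sub-polynomial factor $\e^{O((\log r)^\alpha)}$ is genuinely beaten by the polynomial gain $L_r^{-(M-1)}$ — uniformly over all $G\in\mathfrak{G}_r$ — is where the argument must be executed precisely.
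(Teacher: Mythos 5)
Your overall strategy --- union bound over candidate exceedance clusters, per-vertex exceedance probability, summability, Borel--Cantelli --- is the right one, and it matches the argument to which the paper defers (the proof simply cites \cite[Lemma~6.6]{BK16}). However, your central probability estimate is wrong, and the error is not cosmetic: it invalidates your choice of $M_A$.

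You claim $\Prob(\xi(0) > a_{L_r} - 2A)$ is comparable to $C_A/(L_r\log L_r)$. In fact, with $a_{L_r} = \varrho\log\log L_r$ for $L_r\geq\ee^{\ee}$,
\begin{equation}
\Prob\bigl(\xi(0) > a_{L_r} - 2A\bigr) = \exp\bigl\{-\ee^{(a_{L_r}-2A)/\varrho}\bigr\} = \exp\bigl\{-(\log L_r)\,\ee^{-2A/\varrho}\bigr\} = L_r^{-\epsilon_A}, \qquad \epsilon_A := \ee^{-2A/\varrho}\in(0,1).
\end{equation}
This is exactly the computation the paper records in the proof of Lemma~\ref{l:boundhighexceedances}, where $p_r = L_r^{-\epsilon}$ with $\epsilon = \ee^{-2A/\varrho}$. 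For any $A>0$ the exponent $\epsilon_A$ is strictly less than $1$, so this probability is \emph{polynomially larger} than $C_A/(L_r\log L_r)$; there is no extra $\log L_r$ factor, and the inequality $1-\ee^{-x}\le x$ you invoke plays no role here.

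The consequence is that your closing ``choose $M_A-1\ge 3$'' does not work. Substituting the correct probability into your union bound (the combinatorial enclosing-set count gives the subpolynomial factor $\ee^{O(M S_r)} = L_r^{o(1)}$ since $S_r=(\log r)^\alpha$ and $L_r\ge r$) gives
\begin{equation}
\sup_{G\in\mathfrak{G}_r}\Prob(\BB_r) \ \lesssim\ L_r^{\,1-M\epsilon_A + o(1)}.
\end{equation}
If $A$ is large then $\epsilon_A$ is tiny; with $M=4$ one has $1-4\epsilon_A>0$ and the right-hand side diverges rather than being summable --- indeed $\Prob(\BB_r)$ genuinely fails to tend to $0$ when $M$ is too small. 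To obtain summability you must take $M_A > 3/\epsilon_A = 3\,\ee^{2A/\varrho}$, i.e.\ $M_A$ must grow \emph{exponentially} in $A$. This dependence is precisely why the statement says ``for every $A>0$ there exists $M_A\in\N$'' rather than asserting a universal bound. With the probability estimate corrected and $M_A$ chosen accordingly, the remainder of your argument --- enclosing the exceedance points in a connected set of size $O(M_A S_r)$ via the chain of $S_r$-balls, the bounded-degree lattice-animal count, and the Borel--Cantelli step --- is sound and delivers the lemma.
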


\begin{proof}
The claim follows from a straightforward estimate based on \eqref{e:DE} (see \cite[Lemma 6.6]{BK16}).
\end{proof}

Apart from the dimensions, it will be also important to control the principal eigenvalues of islands in $\mathfrak{C}_{r,A}$. For this we restrict to graphs in $\mathfrak{G}^{(\vartheta)}_\infty$.

\begin{lemma}{\bf [Principal eigenvalues of the islands]}
\label{l:eigislands}
For any $\vartheta>0$ and any $G \in \mathfrak{G}^{(\vartheta)}_\infty$, $\Prob$-almost surely eventually as $r \to \infty$,
\begin{equation}
\label{e:eigislands}
\text{ all $\CC \in \mathfrak{C}_{r,A}$ satisfy: } \;\; \lambda^{\ssup 1}_\CC(\xi; G) 
\leq a_{L_r} - \widehat{\chi}_{\CC}(\varrho; G) + \varepsilon.
\end{equation}
\end{lemma}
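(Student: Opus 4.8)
The plan is to bound $\lambda^{\ssup 1}_\CC(\xi;G)$ by comparing the potential $\xi$ restricted to $\CC$ with the ``allowed shapes'' $q$ appearing in the definition of $\widehat\chi_\CC(\varrho;G)$ in \eqref{e:defhatchi}. Concretely, fix an island $\CC \in \mathfrak{C}_{r,A}$ with highest point $z_\CC$ (so $\xi(z_\CC) = \max_{z\in\CC}\xi(z)$) and set $q_\CC(x) := \xi(x) - \max_{z \in B_r}\xi(z)$ for $x \in \CC$ and $q_\CC(x) := -\infty$ otherwise. By the monotonicity of $\lambda^{\ssup 1}$ under adding a constant to the potential, $\lambda^{\ssup 1}_\CC(\xi;G) = \max_{z\in B_r}\xi(z) + \lambda^{\ssup 1}_\CC(q_\CC;G)$. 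Since $\lambda^{\ssup 1}_\CC(q_\CC;G) \le -\widehat\chi_\CC(\varrho;G)$ whenever $\cL_V(q_\CC;\varrho) \le 1$, the first step is to check that $q_\CC$ (or a suitably truncated/rescaled version) lies in the admissible set, i.e., that $\cL_V(q_\CC;\varrho) = \sum_{x\in\CC}\e^{(\xi(x)-\max_{z\in B_r}\xi(z))/\varrho} \le 1$ up to a controllable error.

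The second step is the heart of the matter: showing that $\cL_V(q_\CC;\varrho)$ is indeed at most $1$ eventually almost surely, uniformly over the (finitely many, by Lemma~\ref{l:size_comps}) islands. Here one uses that $\max_{z\in B_r}\xi(z) = a_{L_r} + O(\log r / r)$ by \eqref{e:asmaxpot}, that each island satisfies $|\CC| \le M_A d_{\max}^{S_r}$ with $S_r = (\log r)^\alpha$, $\alpha \in (0,1)$, and crucially that the number of vertices in $B_r$ where $\xi$ exceeds a given high level is tightly controlled — this is exactly the kind of extreme-value input provided by the analysis in Section~\ref{ss:islands} (and its Euclidean antecedent \cite{BK16}). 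The idea is that, at the relevant scale $a_{L_r} \approx \varrho\log\log L_r$, the empirical point process of exceedances $\{(\xi(x)-a_{L_r})/\,\cdot\,\}_{x\in B_r}$ converges to a Poisson process, whose intensity makes $\sum_{x\in B_r}\e^{(\xi(x)-a_{L_r})/\varrho}$ of order $\log L_r$; dividing out by the correct normalisation (hidden in the precise definition of $a_{L_r}$ and absorbed via the $\e^{-\max/\varrho}$ factor) yields a sum that is $O(1)$, and one then has to pay a small price, $\e^{\varepsilon t}$ or rather an $+\varepsilon$ additive correction in the eigenvalue, to push it below $1$. Quantitatively: $\cL_V(q_\CC;\varrho) \le |B_r|\,\e^{(a_{L_r} - \max_{z\in B_r}\xi(z))/\varrho} \cdot \e^{O(\log r/(\varrho r))}$ combined with $|B_r| = L_r$ and $\e^{-a_{L_r}/\varrho} = 1/\log(L_r\vee\e^\e)$ gives $\cL_V(q_\CC;\varrho) \le L_r/\log L_r$ times small corrections — which is \emph{not} bounded, so the naive bound fails and one must restrict $q_\CC$ to the high part of the potential only (e.g.\ set $q_\CC(x) = -\infty$ where $\xi(x) < a_{L_r} - 2A$, i.e.\ outside $\Pi_{r,A}$, shifting eigenvalue estimates by an error controlled by $2A$), and then use $|\Pi_{r,A}\cap\CC| \le M_A$ to get a genuinely bounded sum. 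Thus the admissible competitor is $q_\CC$ supported on $\CC\cap\Pi_{r,A}$, at height $\xi - \max$, appropriately lowered by a constant of order $\log(M_A)\varrho$ so that $\cL \le 1$ exactly; the resulting loss is absorbed into the $+\varepsilon$.

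The third step is purely bookkeeping: one invokes \eqref{e:monot_princev} to relate $\lambda^{\ssup 1}_\CC(q_\CC;G)$ computed on $\CC\cap\Pi_{r,A}$ versus on all of $\CC$ (losing $d_{\max}$, but also we need to recover the gap $2A$ introduced by the truncation — this is why the constant in front of $A$ in the definition \eqref{defPi} of $\Pi_{r,A}$ is chosen as $2A$ and not $A$, giving room for both the Dirichlet-restriction error and the shift), assembles all error terms, and checks they are $o(1)$ as $r\to\infty$ so that for any fixed $\varepsilon > 0$ the bound \eqref{e:eigislands} holds $\Prob$-a.s.\ eventually. One also needs a union bound over islands, which is fine because on $\BB_r^\cc$ (Lemma~\ref{l:size_comps}) there are at most $|B_r| = L_r = \e^{O(r)}$ of them and the error probabilities decay faster.

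\textbf{Main obstacle.} The delicate point is Step~2: controlling $\cL_V(q_\CC;\varrho)$ from above by $1$ (or $1+o(1)$, then corrected) requires a \emph{uniform} extreme-value estimate on the local profile of $\xi$ near its maximum over all islands simultaneously, and getting the constant right — specifically, knowing that one cannot do appreciably better than the admissible shapes in \eqref{e:defhatchi}, which is where the precise normalisation $a_L = \varrho\log\log(L\vee\e^\e)$ enters and must be tracked through every inequality. This is the analogue of the lattice estimates of \cite{BK16}, adapted to the exponentially-growing-ball geometry where $|B_r|$ replaces the polynomial volume; the adaptation is where the hidden work lies, though it follows the established blueprint.
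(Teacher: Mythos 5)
Your framing is right---you correctly identify the dual characterization $\lambda_\CC(q)\le-\widehat\chi_\CC$ whenever $\cL(q)\le1$ as the lever, and you correctly spot the danger point---but the fix you propose for the ``main obstacle'' does not work, and the paper in fact goes a different route entirely.

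The gap is in Step~2. You want to construct a competitor $q$ with $\cL_\CC(q)\le1$ and $\lambda_\CC(q)\ge\lambda_\CC(\xi)-a_{L_r}-\varepsilon$. Taking $q=\xi-a_{L_r}-\varepsilon$ makes the second requirement an equality, but the first fails (as you note). Your proposed repair---truncate $q$ to $-\infty$ off $\Pi_{r,A}\cap\CC$---does make $\cL$ small, but it only \emph{decreases} the principal Dirichlet eigenvalue on $\CC$, so the needed inequality $\lambda_\CC(q)\ge\lambda_\CC(\xi)-a_{L_r}-\varepsilon$ now points the wrong way: nothing prevents the eigenfunction of $\Delta+\xi$ on $\CC$ from placing appreciable mass on the ``bulk'' $\CC\setminus\Pi_{r,A}$, whose potential can be as high as $a_{L_r}-2A$ and whose cardinality is up to $M_A d_{\max}^{S_r}$. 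Raising the bulk values instead (so that the eigenvalue comparison goes the right way) makes $\cL_\CC(q)$ of order $d_{\max}^{S_r}\ee^{-2A/\varrho}$, which is unbounded since $S_r=(\log r)^\alpha\to\infty$. There is no choice of additive shift ``of order $\log(M_A)\varrho$'' that fixes both constraints simultaneously: this is a genuine obstruction to any pathwise bound, not just bookkeeping.

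The paper sidesteps this entirely by not attempting a deterministic competitor. It argues by contradiction: if $\lambda_\Lambda(\xi)>a_{L_r}-\widehat\chi_\Lambda+\varepsilon$ then necessarily $\cL_\Lambda(\xi)>\ee^{\varepsilon/\varrho}\log L_r$, and it bounds the \emph{probability} of the latter event via Markov's inequality applied to $\exp\{\gamma^{-1}\cL_\Lambda(\xi)\}$ with $\gamma=\sqrt{\ee^{\varepsilon/\varrho}}>1$, using the stochastic domination of $\ee^{\xi(x)/\varrho}$ by $C\vee E$ ($E$ standard exponential) to get $E[\ee^{\gamma^{-1}\cL_\Lambda(\xi)}]\le K_\gamma^{|\Lambda|}$. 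A union bound over all connected $\Lambda$ meeting $B_r$ with $|\Lambda|\le M_A d_{\max}^{S_r}$ (their number grows like $L_r\cdot\ee^{c_\circ|\Lambda|}$) then gives $\Prob(\BB_r)\le\ee^{-c r}$, and Borel--Cantelli finishes. Note the union bound must run over \emph{all} small connected sets, not just the random islands $\CC\in\mathfrak C_{r,A}$, precisely because the islands are $\xi$-measurable and hence cannot be treated as fixed inside a moment bound. So what you need to add is the exponential-moment estimate and the enumeration of connected subsets; the pathwise truncation idea should be abandoned.
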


\begin{proof}
We follow \cite[Lemma~2.11]{GM98}. Let $\varepsilon>0$, $G = (V,E,\cO) \in \mathfrak{G}^{(\vartheta)}_\infty$, and define the event
\begin{equation}
\BB_r := 
\left\{ 
\substack{
\text{there exists a connected subset } \Lambda \subset V \text{ with } \Lambda \cap B_r \neq \emptyset,  
\\  
|\Lambda| \leq M_A d_{\max}^{S_r} \text{ and } \lambda^{\ssup 1}_\Lambda(\xi; G) 
> a_{L_r} - \widehat{\chi}_\Lambda(\varrho;G) + \varepsilon
}
\right\}
\end{equation}
with $M_A$ as in Lemma~\ref{l:size_comps}.
Note that, by \eqref{e:DE},
$\ee^{\xi(x)/\varrho}$ is stochastically dominated by $C \vee E$, where $E$ is an Exp($1$) random variable and $C>0$ is a constant.
Thus, for any $\Lambda \subset V$, using \eqref{e:defhatchi}, taking $\gamma := \sqrt{\ee^{\varepsilon/\varrho}} > 1$ and applying Markov's inequality, 
we may estimate
\begin{equation}
\begin{aligned}
\Prob \left( \lambda^{\ssup 1}_\Lambda(\xi; G) > a_{L_r} - \widehat{\chi}_\Lambda(\varrho;G) + \varepsilon \right)
& \leq \Prob\left( \cL_\Lambda(\xi - a_{L_r}-\varepsilon) > 1\right) 
 = \Prob\left( \gamma^{-1} \cL_\Lambda(\xi)> \gamma \log L_r \right) \\
& \leq \ee^{-\gamma \log L_r} E[\ee^{\gamma^{-1} \cL_\Lambda(\xi)}]
\leq \ee^{-\gamma \log L_r } K_\gamma^{|\Lambda|}
\end{aligned}
\end{equation}
for some constant $K_\gamma \in (1,\infty)$.
Next note that, for any $x \in B_r$, $n \in \N$, the number of connected subsets $\Lambda \subset V$ 
with $x \in \Lambda$ and $|\Lambda|=n$ is at most $\ee^{c_\circ  n}$ for some $c_\circ = c_{\circ}(d_{\max})>0$ 
(see e.g.\ \cite[Proof of Theorem~(4.20)]{Gr99}).
Using a union bound and applying $\log L_r \sim \vartheta r$, we estimate, for some constants $c_1, c_2, c_3>0$,
\begin{equation}
\begin{aligned}
\Prob(\BB_r) 
\leq \ee^{-(\gamma-1) \log L_r} \sum_{n=1}^{\lfloor M_A d_{\max}^{S_r} \rfloor} \ee^{c_\circ n}  K_\gamma^n
\leq c_1 \exp \left\{-c_2 r + c_3 d_{\max}^{(\log r)^\alpha} \right\} \leq \ee^{-\tfrac12 c_2 r}
\end{aligned}
\end{equation}
when $r$ is large. Now the Borel-Cantelli lemma implies that, $\Prob$-almost surely eventually as $r\to \infty$, $\BB_r$ does not occur. The proof is completed by invoking Lemma~\ref{l:size_comps}.
\end{proof}

For later use, we state the consequence for $\GW$ in terms of $\widetilde{\chi}(\rho)$ in \eqref{e:deftildechi}.

\begin{corollary}{\bf [Uniform bound on principal eigenvalue of the islands]}
\label{c:eigislandsGW}
For $G = \GW$ as in Section~{\rm \ref{ss:GW}}, $\vartheta >$ as in \eqref{e:volumerateGW}, and any $\varepsilon>0$,
$\Prob \times \Probgr$-almost surely eventually as $r \to \infty$,
\begin{equation}
\label{e:eigislandsGW}
\max_{\CC \in \mathfrak{C}_{r,A}}\lambda^{\ssup 1}_\CC(\xi; G) 
\leq a_{L_r} - \widetilde{\chi}(\varrho) + \varepsilon.
\end{equation}
\end{corollary}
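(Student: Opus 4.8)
The plan is to deduce Corollary~\ref{c:eigislandsGW} directly from Lemma~\ref{l:eigislands} by specialising to $G=\GW$ and then bounding each $\widehat{\chi}_\CC(\varrho;G)$ from below by the uniform constant $\widetilde{\chi}(\varrho)$. First I would recall that $\GW \in \mathfrak{G}^{(\vartheta)}_\infty$ $\Probgr$-almost surely, with $\vartheta$ as in \eqref{e:volumerateGW}, so that Lemma~\ref{l:eigislands} applies: $\Prob \times \Probgr$-almost surely eventually as $r \to \infty$, every $\CC \in \mathfrak{C}_{r,A}$ satisfies $\lambda^{\ssup 1}_\CC(\xi;G) \leq a_{L_r} - \widehat{\chi}_\CC(\varrho;G) + \varepsilon$. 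Thus it suffices to show $\widehat{\chi}_\CC(\varrho;G) \geq \widetilde{\chi}(\varrho)$ for every island $\CC$, since then the upper bound in \eqref{e:eigislandsGW} follows by taking the maximum over $\CC \in \mathfrak{C}_{r,A}$ (which is a finite set on the almost-sure event).

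The key step is therefore the inequality $\widehat{\chi}_\CC(\varrho;G) \geq \widetilde{\chi}(\varrho)$. By Proposition~\ref{p:dualrepchi} (the first displayed inequality, with $\Lambda = \CC$ and $V$ the full vertex set of $\GW$), we have $\widehat{\chi}_\CC(\varrho;G) \geq \widehat{\chi}_V(\varrho;G) = \chi_G(\varrho) = \chi_{\GW}(\varrho)$. Now $\GW$ is $\Probgr$-almost surely an infinite tree all of whose vertex degrees lie in $\supp(D_g)$ (the root has degree $D_0$ and every other vertex has degree in $\supp(D_g)$; under Assumption~(BD) one has $D_0 \in \N$ and, since $d_{\min} \geq 2$, the root degree can be absorbed — more carefully, $\GW$ contains as a subtree an infinite tree with all degrees in $\supp(D_g)$, or one argues directly that $\GW$ itself qualifies after noting $\min\supp(D_g)\ge 2$). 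Hence $\GW$ is an admissible competitor in the infimum \eqref{e:deftildechi} defining $\widetilde{\chi}(\varrho)$, which gives $\chi_{\GW}(\varrho) \geq \widetilde{\chi}(\varrho)$. Chaining these inequalities yields $\widehat{\chi}_\CC(\varrho;G) \geq \widetilde{\chi}(\varrho)$, and combining with Lemma~\ref{l:eigislands} completes the proof.

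The main obstacle is the bookkeeping issue of the root degree: the definition \eqref{e:deftildechi} of $\widetilde{\chi}(\varrho)$ quantifies over infinite trees $T$ \emph{with degrees in} $\supp(D_g)$, whereas the root of $\GW$ has degree $D_0$, which need not lie in $\supp(D_g)$. One resolves this by observing that $\chi_T(\varrho)$ is defined through an infimum over $p \in \cP(V)$ and is monotone under passing to subgraphs containing the root; since $d_{\min}\ge 2$, every vertex of $\GW$ has degree at least $2$, and one can exhibit an infinite subtree $T'\subset\GW$ rooted at (a neighbour of) $\cO$ all of whose degrees lie in $\supp(D_g)$, whence $\chi_{\GW}(\varrho) \geq \chi_{T'}(\varrho) \geq \widetilde\chi(\varrho)$. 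Alternatively, and more cleanly, one notes that the argument of Lemma~\ref{l:eigislands} already shows $\lambda^{\ssup 1}_\CC(\xi;G) \le a_{L_r} - \widehat\chi_\CC(\varrho;G)+\varepsilon$, and that any island $\CC$, being a finite connected subgraph of the tree $\GW$, embeds isometrically into \emph{some} infinite tree $T$ with all degrees in $\supp(D_g)$ (extend $\CC$ downward using the offspring law); by the monotonicity $\widehat\chi_\CC(\varrho;G) \ge \widehat\chi_{V(T)}(\varrho;T) = \chi_T(\varrho) \ge \widetilde\chi(\varrho)$ from Proposition~\ref{p:dualrepchi}. Either way the quantitative content is immediate once this embedding/monotonicity point is settled; there is no further analytic work.
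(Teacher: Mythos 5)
Your overall strategy --- deduce the corollary from Lemma~\ref{l:eigislands} and then bound $\widehat\chi_\CC(\varrho;G)$ from below by $\widetilde\chi(\varrho)$ via Proposition~\ref{p:dualrepchi} and an embedding of each island into an admissible infinite tree --- is exactly the paper's approach, and your ``alternatively'' paragraph is essentially the paper's proof. However, both routes you sketch leave a genuine gap, precisely at the point you yourself flag.

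Your first route chains $\widehat\chi_\CC(\varrho;G)\geq\chi_{\GW}(\varrho)$ (correct, by Proposition~\ref{p:dualrepchi}) with $\chi_{\GW}(\varrho)\geq\widetilde\chi(\varrho)$. The second inequality needs $\GW$ itself to be a legal competitor in the infimum \eqref{e:deftildechi}, i.e.\ an infinite tree \emph{all} of whose degrees lie in $\supp(D_g)$. But the root has degree $D_0$, which Assumption~(BD) does not constrain to lie in $\supp(D_g)$, so this step fails in general. Your suggested repair --- pass to a subtree $T'\subset\GW$ and invoke ``monotonicity under passing to subgraphs'' to get $\chi_{\GW}\geq\chi_{T'}$ --- is not a valid deduction: removing vertices and their incident edges simultaneously decreases $I_E$ and shrinks the class of admissible $p$, so $\chi$ has no simple monotonicity under vertex-induced subgraphs. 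Proposition~\ref{p:dualrepchi} only gives $\widehat\chi_\Lambda(\varrho;G)\geq\chi_G(\varrho)$, where the support of $p$ is restricted but the full edge set $E$ of the \emph{same} graph is retained; that is a different statement.

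Your second route is the correct one, but it is missing the one fact that makes the embedding work: that $\cO\notin\CC$ for every island $\CC\in\mathfrak C_{r,A}$, eventually as $r\to\infty$. The paper obtains this from Lemma~\ref{l:maxpotential}: the maximum of $\xi$ in a ball of radius $O(S_r)$ around $\cO$ is only $O(\log\log r)\ll a_{L_r}-2A$, hence $\dist(\cO,D_{r,A})\geq 2$ eventually, so $\cO$ lies in no $\CC$ and in no $\partial\CC$. Once this is known, one builds $T_\CC$ by attaching to each boundary vertex of $\CC$ an infinite tree with offspring $d_{\min}-1$; every vertex of $\CC$ then retains its $\GW$-degree (which lies in $\supp(D_g)$ since it is not the root), the boundary vertices get degree $d_{\min}$, and $\widehat\chi_\CC(\varrho;\GW)=\widehat\chi_\CC(\varrho;T_\CC)\geq\chi_{T_\CC}(\varrho)\geq\widetilde\chi(\varrho)$. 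Without the observation that $\cO$ is far from the high-potential region, the tree you ``extend $\CC$ downward'' into may contain $\cO$ with its inadmissible degree $D_0$, and the inequality $\chi_T\geq\widetilde\chi$ does not apply. This is not a bookkeeping nicety: it is a probabilistic input (from Lemma~\ref{l:maxpotential}) that the proof genuinely requires, and your write-up omits it.
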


\begin{proof}
First note that $\GW \in \mathfrak{G}^{(\vartheta)}_\infty$ almost surely, so Lemma~\ref{l:eigislands} applies. By Lemma~\ref{l:maxpotential}, for any constant $C>0$, the maximum of $\xi$ in a ball of radius $C S_r$ around $\cO$
is of order $O(\log \log r)$. This means that $\cO$ is distant from $\Pi_{r,A}$, in particular, $\dist(\cO, D_{r,A}) \geq 2$ almost surely eventually as $r \to \infty$. For $\CC \in \mathfrak{C}_{r,A}$, let $T_{\CC}$ be the infinite tree obtained by attaching to each $x \in \partial \CC :=\{y \notin \CC \colon\, \exists z \in \CC \text{ with } z \sim y\} \not \ni \cO$  an infinite tree with constant offspring $d_{\min}-1$. Then $T_\CC$ is an infinite tree with degrees in $\supp(D_g)$ and, by Proposition~\ref{p:dualrepchi},
\begin{equation*}
\widehat{\chi}_{\CC}(\varrho ; \GW) = \widehat{\chi}_{\CC}(\varrho ; T_\CC) 
\geq \chi_{T_\CC}(\varrho) \geq \widetilde{\chi}(\varrho),
\end{equation*}
so the claim follows by Lemma~\ref{l:eigislands}.
\end{proof}


\subsection{Connectivity}
\label{ss:connectivity}

We again work in the setting of Section~\ref{ss:graphs}. We recall the following Chernoff bound for a Binomial random variable $\textnormal{Bin}(n,p)$ with parameters $n$, $p$ (see e.g.\ \cite[Lemma 5.9]{BKS18}):
\begin{equation}
\label{e:chernoffBin}
P \left( \textnormal{Bin}(n,p) \geq u\right) \leq \exp \left\{ -u \left( \log \frac{u}{np} - 1\right)\right\} 
\qquad \forall\, u > 0.
\end{equation}

\begin{lemma}{\bf [Number of intermediate peaks of the potential]}
\label{l:bound_mediumpoints}
For any $\beta \in (0,1)$ and any $\varepsilon \in (0, \beta/2)$, the following holds. For $G \in \mathfrak{G}_r$ and a self-avoiding path $\pi$ in $G$, set
\begin{equation}
\label{e:bound_mediumpoints}
N_{\pi} = N_{\pi}(\xi) :=|\{z \in \supp(\pi) \colon\, \xi(z) > (1-\varepsilon) a_{L_r} \}|.
\end{equation}
Define the event
\begin{equation}
\BB_r := \left\{ 
\substack{\text{there exists a self-avoiding path } \pi \text{ in $G$ with } \\ 
 \supp(\pi) \cap B_r \neq \emptyset, \, |\supp(\pi)| \geq (\log L_r)^{\beta}
\text{ and }  N_\pi > \frac{|\supp(\pi)|}{(\log {L_r})^\varepsilon}}
\right\}.
\end{equation}
Then $\sum_{r \in \N_0} \sup_{G \in \mathfrak{G}_r} \Prob(\BB_r) < \infty$. In particular,
\begin{equation}
\lim_{r \to \infty} \sup_{G \in \mathfrak{G}_r} \Prob(\BB_r) = 0
\end{equation}
and, for any fixed $G \in \mathfrak{G}_\infty$, $\Prob$-almost surely eventually as $r \to \infty$, all self-avoiding paths $\pi$ in $G$ with $\supp(\pi) \cap B_r \neq \emptyset$ and $|\supp(\pi)| \geq (\log L_r)^{\beta}$ satisfy $N_{\pi} \le \frac{|\supp(\pi)|}{(\log L_r)^\varepsilon}$.
\end{lemma}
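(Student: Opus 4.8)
The strategy is a first-moment / union bound over self-avoiding paths, combined with the Chernoff bound \eqref{e:chernoffBin} for the Binomial count of ``medium-high'' vertices along a fixed path, followed by Borel--Cantelli. First I would fix $\beta\in(0,1)$ and $\varepsilon\in(0,\beta/2)$, and fix $G\in\mathfrak G_r$. For a fixed self-avoiding path $\pi$ of length $\ell:=|\supp(\pi)|$, the random variables $\big(\1\{\xi(z)>(1-\varepsilon)a_{L_r}\}\big)_{z\in\supp(\pi)}$ are i.i.d.\ Bernoulli (the $\xi$'s are i.i.d.\ and the vertices of a self-avoiding path are distinct), with success probability $p_r:=\Prob(\xi(0)>(1-\varepsilon)a_{L_r})$. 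By Assumption~(DE), for $r$ large, $p_r=\exp\{-\exp((1-\varepsilon)a_{L_r}/\varrho)\}=\exp\{-(\log L_r)^{1-\varepsilon}\}$, so $p_r$ is superpolynomially small in $\log L_r$; in particular $\ell p_r$ is tiny whenever $\ell$ is, say, polynomial in $\log L_r$, while the threshold $u:=\ell/(\log L_r)^\varepsilon$ is much larger than $\ell p_r$. Thus $N_\pi\sim\textnormal{Bin}(\ell,p_r)$, and \eqref{e:chernoffBin} with $u=\ell(\log L_r)^{-\varepsilon}$ gives
\begin{equation}
\label{e:chernoff_applied}
\Prob\big(N_\pi>u\big)\le\exp\Big\{-\frac{\ell}{(\log L_r)^\varepsilon}\Big(\log\frac{\ell(\log L_r)^{-\varepsilon}}{\ell p_r}-1\Big)\Big\}
\le\exp\Big\{-\frac{\ell}{(\log L_r)^\varepsilon}\cdot\tfrac12(\log L_r)^{1-\varepsilon}\Big\}
=\exp\Big\{-\tfrac12\,\ell\,(\log L_r)^{1-2\varepsilon}\Big\},
\end{equation}
using that $\log(1/p_r)=(\log L_r)^{1-\varepsilon}$ dominates $\varepsilon\log\log L_r+1$ for large $r$.

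Next I would control the number of self-avoiding paths to union-bound over. A self-avoiding path $\pi$ with $\supp(\pi)\cap B_r\ne\emptyset$ and $|\supp(\pi)|=\ell$ is determined by a choice of one of its vertices in $B_r$ (at most $L_r$ choices) together with its shape; since every vertex has degree $\le d_{\max}$, the number of such paths is at most $L_r\,\ell\,d_{\max}^{\ell}$ (the factor $\ell$ accounts for which vertex along $\pi$ lies in $B_r$; this crude bound suffices). Summing \eqref{e:chernoff_applied} over all $\ell\ge(\log L_r)^\beta$ and over the $\le L_r\,\ell\,d_{\max}^\ell$ paths of each length,
\begin{equation}
\label{e:unionbound_B}
\Prob(\BB_r)\le L_r\sum_{\ell\ge(\log L_r)^\beta}\ell\,d_{\max}^{\ell}\,
\exp\Big\{-\tfrac12\ell(\log L_r)^{1-2\varepsilon}\Big\}
\le L_r\sum_{\ell\ge(\log L_r)^\beta}\ell\,\exp\Big\{-\tfrac14\ell(\log L_r)^{1-2\varepsilon}\Big\},
\end{equation}
where in the last step I absorb $d_{\max}^\ell=\exp\{\ell\log d_{\max}\}$ into the exponential, valid once $(\log L_r)^{1-2\varepsilon}\ge 4\log d_{\max}$, i.e.\ for $r$ large (here $1-2\varepsilon>0$ since $\varepsilon<\beta/2<1/2$). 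The geometric-type sum in \eqref{e:unionbound_B} is dominated by its first term and is at most (a constant times) $(\log L_r)^\beta\exp\{-\tfrac14(\log L_r)^{\beta}(\log L_r)^{1-2\varepsilon}\}=(\log L_r)^\beta\exp\{-\tfrac14(\log L_r)^{1+\beta-2\varepsilon}\}$. Multiplying by $L_r=\exp\{\log L_r\}$ and using $1+\beta-2\varepsilon>1$, the bound $\Prob(\BB_r)\le\exp\{\log L_r-\tfrac14(\log L_r)^{1+\beta-2\varepsilon}+o(\cdot)\}$ decays faster than any polynomial in $L_r$; since $G\in\mathfrak G_r$ forces $L_r\ge r+1$, this is summable in $r$. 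As the estimate is uniform in $G\in\mathfrak G_r$, we get $\sum_{r\in\N_0}\sup_{G\in\mathfrak G_r}\Prob(\BB_r)<\infty$, whence $\sup_{G\in\mathfrak G_r}\Prob(\BB_r)\to0$, and for any fixed $G\in\mathfrak G_\infty$ the Borel--Cantelli lemma gives that $\BB_r$ fails for all large $r$, which is the almost-sure statement.

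The only delicate points are bookkeeping ones: (i) making sure the comparison $\ell p_r\ll u$ holds so that the Chernoff exponent in \eqref{e:chernoff_applied} is genuinely of order $\ell(\log L_r)^{1-2\varepsilon}$ and not smaller — this is where $\varepsilon<\beta/2$ (hence $\varepsilon<1/2$) is used, to keep the exponent's base power $1-2\varepsilon$ positive; and (ii) ensuring the combinatorial path count $L_r\,\ell\,d_{\max}^\ell$ is overwhelmed, which works precisely because the per-path bound carries the superpolynomial-in-$\log L_r$ gain $(\log L_r)^{1-2\varepsilon}$ per unit length while the entropy only costs $O(1)$ per unit length. I expect step (ii) — tracking that the $\exp\{\log L_r\}$ prefactor from the choice of the base vertex is killed by the $\exp\{-c(\log L_r)^{1+\beta-2\varepsilon}\}$ factor, uniformly over $\mathfrak G_r$ — to be the main thing to get right, but it is routine given $1+\beta-2\varepsilon>1$.
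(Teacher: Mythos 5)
Your proof is correct and follows essentially the same route as the paper: compute $p_r=\exp\{-(\log L_r)^{1-\varepsilon}\}$, apply the Chernoff bound \eqref{e:chernoffBin} to the Binomial count along a fixed self-avoiding path, union-bound over path lengths and base vertices using $d_{\max}^\ell$ entropy, and conclude with Borel--Cantelli. Your path count $L_r\,\ell\,d_{\max}^\ell$ (anchoring at an arbitrary visited vertex of $B_r$ rather than at $\pi_0$) is slightly more careful than the paper's, which fixes $\pi_0=x\in B_r$; both yield the same summable bound $\exp\{-c(\log L_r)^{1+\delta}\}$ since $1+\beta-2\varepsilon>1$.
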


\begin{proof}
Fix $\beta \in (0,1)$ and $\varepsilon \in (0,\beta/2)$. For any $G \in \mathfrak{G}_r$, \eqref{e:DE} implies
\begin{equation}
\label{e:bound_mp1}
p_r := \Prob(\xi(0) > (1-\varepsilon)a_{L_r}) = \exp\left\{-(\log L_r)^{1-\varepsilon}\right\}.
\end{equation}
Fix $x \in B_n$ and $k \in \N$. The number of self-avoiding paths $\pi$ in $B_r$ with $|\supp(\pi)|=k$ and $\pi_0 = x$ is at most $d_{\max}^k$. For such a $\pi$, the random variable $N_{\pi}$ has a Bin($p_r$, $k$)-distribution. Using \eqref{e:chernoffBin} and a union bound, we obtain
\begin{multline}
\label{e:bound_mp4}
\Prob\Bigl( \exists\, \text{ self-avoiding } \pi \text{ with } |\supp(\pi)|=k, \pi_0 = x 
\text{ and } N_{\pi} > k/ (\log L_r)^\varepsilon \Bigr) \\
\le \exp \left\{ -k \left((\log L_r)^{1-2\varepsilon} - \log d_{\max} 
- \frac{1+ \varepsilon\log\log L_r}{(\log L_r)^{\varepsilon}}\right) \right\}.
\end{multline}
Note that, since $L_r > r$ and the function $x \mapsto \log \log x / (\log x)^\varepsilon$ is eventually decreasing, for $r$ large enough and uniformly over $G \in \mathfrak{G}_r$, the expression in parentheses above is at least $\frac12 (\log L_r)^{1- 2\varepsilon}$. Summing over $k \ge (\log L_r)^\beta$ and $x \in B_r$, we get
\begin{equation}
\label{e:bound_mp5}
\begin{aligned}
&\Prob\left(\exists\, \text{ self-avoiding } \pi \text{ such that } |\supp(\pi)| \geq (\log L_r)^\beta
\text{ and } \eqref{e:bound_mediumpoints} \text{ does not hold}\right)\\
&\qquad \le 2 L_r \exp \left\{-\tfrac12 (\log L_r)^{1+\beta-2\varepsilon} \right\}
\leq c_1 \exp \left\{-c_2 (\log L_r)^{1+\delta} \right\}
\end{aligned}
\end{equation}
for some positive constants $c_1, c_2, \delta$, uniformly over $G \in \mathfrak{G}_r$. Since $L_r > r$, \eqref{e:bound_mp5} is summable in $r$ (uniformly over $G \in \mathfrak{G}_r$). The proof is concluded invoking the Borel-Cantelli lemma.
\end{proof}

A similar computation bounds the number of high exceedances of the potential.

\begin{lemma}{\bf [Number of high exceedances of the potential]}
\label{l:boundhighexceedances}
For any $A>0$ there is a $C \ge 1$ such that, for all $\delta \in (0,1)$, the following holds. For $G \in \mathfrak{G}_r$ and a self-avoiding path $\pi$ in $G$, let
\begin{equation}
N_\pi := |\{ x \in \supp(\pi) \colon\, \xi(x) > a_{L_r} - 2A \}|.
\end{equation}
Define the event
\begin{equation}
\BB_r := \left\{ 
\substack{\text{there exists a self-avoiding path } \pi \text{ in $G$ with } \\ 
 \supp(\pi) \cap B_r \neq \emptyset, \, |\supp(\pi)| \geq C (\log L_r)^{\delta}
\text{ and }  N_\pi > \frac{|\supp(\pi)|}{(\log {L_r})^\delta}}
\right\}.
\end{equation}
Then $\sum_{r \in \N_0} \sup_{G \in \mathfrak{G}_r} \Prob(\BB_r) < \infty.$ In particular,
\begin{equation}
\lim_{r \to \infty} \sup_{G \in \mathfrak{G}_r} \Prob(\BB_r) = 0
\end{equation}
and, for any fixed $G \in \mathfrak{G}_\infty$, $\Prob$-almost surely eventually as $r \to \infty$, all self-avoiding paths $\pi$ in $G$ with $\supp(\pi) \cap B_r \neq \emptyset$ and $|\supp(\pi)| \geq C (\log L_r)^{\delta}$ satisfy
\begin{equation}
\label{e:boundhighexceedances}
N_\pi = |\{ x \in \supp(\pi) \colon\, \xi(x) > a_{L_r} - 2A \}| \le \frac{|\supp(\pi)|}{(\log L_r)^\delta}.
\end{equation} 
\end{lemma}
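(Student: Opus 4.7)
My plan is to adapt the proof of Lemma~\ref{l:bound_mediumpoints} to the new threshold. The crucial observation is that, since the threshold $a_{L_r}-2A$ differs from $a_{L_r}$ only by a constant (rather than by a fixed fraction of it), Assumption~(DE) gives
\begin{equation}
p_r := \Prob(\xi(0)>a_{L_r}-2A) = \exp\bigl\{-\ee^{\log\log L_r-2A/\varrho}\bigr\} = L_r^{-c_A},
\qquad c_A:=\ee^{-2A/\varrho}\in(0,1],
\end{equation}
valid for all $r$ large enough, uniformly over $G\in\mathfrak{G}_r$. Compared with the threshold $(1-\varepsilon)a_{L_r}$ in Lemma~\ref{l:bound_mediumpoints}, the single-vertex exceedance probability is now only \emph{polynomially} small in $L_r$ rather than super-polynomially small. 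This is precisely why the hypothesis here demands paths of length at least $C(\log L_r)^\delta$ with a constant $C$ calibrated to $A$.

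The argument will proceed in three steps. First, for a fixed self-avoiding path $\pi$ with $k:=|\supp(\pi)|$ vertices, the variable $N_\pi$ is $\mathrm{Bin}(k,p_r)$-distributed, and I will apply \eqref{e:chernoffBin} with $u=k/(\log L_r)^\delta$. Since $\log(u/(kp_r))=c_A\log L_r-\delta\log\log L_r$, this should yield, for all $r$ large,
\begin{equation}
\Prob\bigl(N_\pi>k/(\log L_r)^\delta\bigr)\le \exp\bigl\{-\tfrac{c_A}{2}\,k\,(\log L_r)^{1-\delta}\bigr\}.
\end{equation}
Second, I will bound the number of self-avoiding paths with $|\supp(\pi)|=k$ passing through a fixed $x\in B_r$ by $k\,d_{\max}^k$ (choose the position of $x$ along $\pi$, and extend at most $d_{\max}$ ways at each step). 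Third, I will union-bound over $x\in B_r$ and $k\ge C(\log L_r)^\delta$, absorb the $d_{\max}^k$ factor into the exponent (negligible since $\log d_{\max}\ll (\log L_r)^{1-\delta}$ for $r$ large), and sum the resulting geometric series in $k$, whose leading contribution comes from $k=\lceil C(\log L_r)^\delta\rceil$. This should produce a bound of the form
\begin{equation}
\sup_{G\in\mathfrak{G}_r}\Prob(\BB_r) \le c_1\,L_r^{\,1-c_A C/3}.
\end{equation}

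The main — and essentially only — delicate point is the calibration of $C$: it must be taken large enough depending on $A$ (concretely, any $C>9/c_A=9\,\ee^{2A/\varrho}$ will suffice) to ensure that $c_A C/3>2$, so that the bound above becomes summable in $r$ via $L_r\ge r+1$. Once this is in place, the Borel--Cantelli lemma delivers both conclusions of the statement: the summability of $\sup_{G\in\mathfrak{G}_r}\Prob(\BB_r)$ in $r$, and — applied to any fixed $G\in\mathfrak{G}_\infty$, which belongs to $\mathfrak{G}_r$ for every $r$ — the $\Prob$-almost-sure eventual absence of $\BB_r$, which translates into \eqref{e:boundhighexceedances} holding for all relevant paths $\pi$.
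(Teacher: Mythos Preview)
Your proposal is correct and follows essentially the same approach as the paper's proof, which simply says ``proceed as for Lemma~\ref{l:bound_mediumpoints}, noting that this time $p_r = L_r^{-\epsilon}$ where $\epsilon = \ee^{-2A/\varrho}$, and taking $C > 2/\epsilon$.'' Your constant $c_A$ is the paper's $\epsilon$, and your choice $C > 9/c_A$ is a safe (if slightly larger) version of the paper's $C > 2/\epsilon$; the extra factor of $k$ in your path count (from allowing $\pi$ to pass through rather than start at $x$) is harmless and arguably more careful than the paper's phrasing.
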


\begin{proof}
Proceed as for Lemma~\ref{l:bound_mediumpoints}, noting that this time
\begin{equation}
\label{e:bound_he1}
p_r := \Prob\big(\xi(0) > a_{L_r} - 2A\big) = L_r^{-\epsilon}
\end{equation}
where $\epsilon =\ee^{-\frac{2A}{\varrho}}$, and taking $C > 2/\epsilon$.
\end{proof}


\section{Path expansions}
\label{s:pathexpansions}

We again work in the setting of Section~\ref{ss:graphs}. In the following, we develop a way to bound the contribution of certain specific classes of paths to the Feynman-Kac formula, similar to what is done in \cite{BKS18} in the $\Z^d$-case. In Section~\ref{ss:keyprop} we state a key proposition reducing the entropy of paths. This proposition is proved in Section~\ref{ss:proofProp} with the help of a lemma bounding the mass of an equivalence class of paths, which is stated and proved in Section~\ref{ss:equivclasspaths} and is based on ideas from \cite{MP16}. The proof of this lemma requires two further lemmas controlling the mass of the solution along excursions, which are stated and proved in Section~\ref{ss:mass_fixed_path}.   


\subsection{Key proposition}
\label{ss:keyprop}

Fix a graph $G=(V, E, \cO) \in \mathfrak{G}_r$. We define various sets of nearest-neighbour paths in $G$ as follows. For $\ell \in \N_0$ and subsets $\Lambda, \Lambda' \subset V$, put
\begin{equation}
\label{defcurlyP}
\begin{aligned}
&\scrP_\ell(\Lambda,\Lambda') := \left\{ (\pi_0, \ldots, \pi_{\ell}) \in V^{\ell+1} \colon\,
\begin{array}{ll} 
&\pi_0 \in \Lambda, \pi_{\ell} \in \Lambda',\\
&\{\pi_{i}, \pi_{i-1}\} \in E \;\forall\, 1 \le i \le \ell
\end{array}
\right\},\\
&\scrP(\Lambda, \Lambda') := \bigcup_{\ell \in \N_0} \scrP_\ell(\Lambda,\Lambda'),
\end{aligned}
\end{equation}
and set
\begin{equation}
\scrP_\ell := \scrP_\ell(V,V), \qquad \scrP := \scrP(V,V). 
\end{equation}
When $\Lambda$ or $\Lambda'$ consists of a single point, we write $x$ instead of $\{x\}$. For $\pi \in \scrP_\ell$, we set $|\pi| := \ell$. We write $\supp(\pi) := \{\pi_0, \ldots, \pi_{|\pi|}\}$ to denote the set of points visited by $\pi$.

Let $X=(X_t)_{t\ge0}$ be the continuous-time random walk on $G$ that jumps from $x \in V$ to any neighbour $y\sim x$ with rate $1$. We denote by $(T_k)_{k \in \N_0}$ the sequence of jump times (with $T_0 := 0$). For $\ell \in \N_0$, let 
\begin{equation}
\pi^{\ssup \ell}(X) := (X_0, \ldots, X_{T_{\ell}})
\end{equation}
be the path in $\scrP_\ell$ consisting of the first $\ell$ steps of $X$ and, for $t  \ge 0$, let
\begin{equation}
\label{e:defpathX0t}
\pi(X_{[0,t]}) = \pi^{\ssup{\ell_t}}(X), \quad \text{ where } \ell_t \in \N_0 \, 
\text{ satisfies } \, T_{\ell_t} \le t < T_{\ell_t+1},
\end{equation}
denote the path in $\scrP$ consisting of all the steps taken by $X$ between times $0$ and $t$.

Recall the definitions from Section~\ref{ss:islands}. For $G \in \mathfrak{G}_r$, $\pi \in \scrP$ and $A>0$, define
\begin{equation}
\label{e:deflambdaLApi}
\lambda_{r,A}(\pi) := \sup \big\{ \lambda^{\ssup 1}_\CC(\xi; G) 
\colon\, \CC \in \mathfrak{C}_{r,A}, 
\, \supp(\pi)\cap \CC \cap \Pi_{r,A} \neq \emptyset \big\},
\end{equation}
with the convention $\sup \emptyset = -\infty$. 
This is the largest principal eigenvalue among the components of $\mathfrak C_{r,A}$ in $G$ 
that have a point of high exceedance visited by the path $\pi$.

The main result of this section is the following proposition. Hereafter we abbreviate $\log^{\ssup 3} x := \log \log \log x$.

\begin{proposition}{\bf [Entropy reduction]}
\label{p:massclass}
For every fixed $d_{\max} \in \N$, there exists an $A_0 = A_0(d_{\max}) > 0$ such that the following holds. Let $\alpha \in (0,1)$ be as in \eqref{e:def_Sr} and let $\kappa\in (\alpha,1)$. For all $A > A_0$, there exists a constant $c_A = c_A(d_{\max}) >0$ such that, with probability tending to one as $r\to\infty$ uniformly over $G \in \mathfrak{G}_r$, the following statement is true: For each $x \in B_r$, each $\NN \subset \scrP(x,B_r)$ satisfying $\supp(\pi) \subset B_r$ and $\max_{1 \le \ell \le |\pi|} \dist_{G}(\pi_\ell, x) \geq (\log L_r)^\kappa$ for all $\pi \in \mathcal{N}$, and each assignment $\pi\mapsto ( \gamma_\pi , z_\pi)\in \R \times V$ satisfying
\begin{equation}
\label{e:cond_massclass1}
\gamma_\pi \ge \left(\lambda_{r,A}(\pi)  + \texte^{-S_r} \right) \vee (a_{L_r}- A) \qquad \text{ for all } \pi \in \NN
\end{equation}
and
\begin{equation}
\label{e:cond_massclass2}
z_\pi \in \supp(\pi) \cup 
\bigcup_{ \substack{\CC \in \mathfrak{C}_{r,A} \colon \\ \supp(\pi) \cap \CC \cap \Pi_{r,A} \neq \emptyset}} \CC \qquad \text{ for all } \pi \in \NN,
\end{equation}
the following inequality holds for all $t \ge 0$:
\begin{equation}
\label{e:mass_class}
\log \E_x \left[ \texte^{\int_0^t \xi(X_s) \textd s} \1_{\{\pi(X_{[0,t]}) \in \mathcal{N}\}}\right]
\le \sup_{\pi \in \mathcal{N}} \Big\{ t \gamma_\pi - \left( \log^{\ssup 3} L_r   - c_A \right) \dist_{G}(x,z_\pi) \Big\}. 
\end{equation}
Moreover, for any $G \in \mathfrak{G}_\infty$, $\Prob$-almost surely eventually as $r\to\infty$, the same statement is true.
\end{proposition}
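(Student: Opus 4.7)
The plan is to partition $\NN$ into equivalence classes with controlled cardinality, bound the Feynman--Kac contribution of each class via the strong Markov property, and absorb the number of classes into the constant $c_A$. I would proceed through three steps: an equivalence relation on paths via signatures, a per-class mass bound, and an entropy count for signatures.

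\emph{Step 1 (signatures).} To each $\pi \in \NN$ I attach a \emph{signature} consisting of the ordered sequence of distinct islands $\CC_1,\CC_2,\ldots\in\mathfrak{C}_{r,A}$ whose intersection with $\Pi_{r,A}$ is met by $\pi$, together with the chronologically loop-erased self-avoiding skeletons of the portions of $\pi$ lying in $D_{r,A}^\cc$ between consecutive island visits and at the two endpoints. Two paths are declared equivalent when they share the same signature; let $\KK$ denote an equivalence class. Since $\lambda_{r,A}(\pi)$ and the set of admissible $z_\pi$ depend on $\pi$ only through its signature, hypotheses \eqref{e:cond_massclass1}--\eqref{e:cond_massclass2} lift $\gamma_\pi$ and $z_\pi$ to signature-level quantities $\gamma_\KK,z_\KK$.

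\emph{Step 2 (per-class bound).} The key lemma to be proved in Section~\ref{ss:equivclasspaths} should read
\begin{equation}
\E_x\!\left[\texte^{\int_0^t \xi(X_s)\,\dd s}\,\1_{\{\pi(X_{[0,t]})\in\KK\}}\right]
\le \ee^{t\gamma_\KK}\exp\!\Big(-(\log^{\ssup 3} L_r - c'_A)\,\sigma(\KK)\Big),
\end{equation}
where $\sigma(\KK)\ge\dist_G(x,z_\KK)$ denotes the total length of the self-avoiding skeletons in the signature. To prove it, I insert the strong Markov property at each successive entry and exit time of the islands of $\mathfrak{C}_{r,A}$. Inside every island, Lemma~\ref{l:bounds_mass} gives the rate $\lambda^{\ssup 1}_\CC\le\gamma_\KK$ together with a polynomial volume prefactor $|\CC|^{1/2}\le(M_A d_{\max}^{S_r})^{1/2}$. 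Outside the islands, Lemma~\ref{l:mass_out} applied with this same $\gamma_\KK$ controls each excursion; the gap $\ee^{-S_r}$ in \eqref{e:cond_massclass1} keeps the denominator $\gamma_\KK-\lambda^{\ssup 1}_\Lambda$ bounded below. Finally, the specific $\sigma(\KK)$-jump sequence along the fixed skeletons is executed with probability bounded by a Poisson-jump tail of order $\exp(-\sigma(\KK)\log(\sigma(\KK)/t))$, and balancing this cost against the in-island dwelling-time budget (which is at most $t$) converts the naive $\log^{\ssup 2}L_r$ cost per skeleton step into the required $\log^{\ssup 3} L_r$ factor.

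\emph{Step 3 (signature count).} To count signatures, I use the high-probability event from Lemmas~\ref{l:size_comps}, \ref{l:bound_mediumpoints} and \ref{l:boundhighexceedances}: any self-avoiding skeleton of length $\ell\ge C(\log L_r)^\delta$ meets at most $\ell/(\log L_r)^\delta$ sites of $\Pi_{r,A}$, and each island contains at most $M_A d_{\max}^{S_r}$ vertices. Hence the number of island sequences attachable to a given skeleton of length $\ell$ is at most $(M_A d_{\max}^{S_r})^{\ell/(\log L_r)^\delta}\le\exp(c''_A\ell)$ for $\delta>\alpha$, while the number of self-avoiding skeletons of length $\ell$ from $x$ is at most $d_{\max}^\ell$. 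Since $\sigma(\KK)\ge\dist_G(x,z_\pi)$, a geometric sum in $\ell$ against the per-class bound gives
\begin{equation}
\sum_\KK\E_x\!\left[\texte^{\int_0^t\xi(X_s)\,\dd s}\,\1_{\{\pi(X_{[0,t]})\in\KK\}}\right]
\le C\,\sup_{\pi\in\NN}\exp\!\Big(t\gamma_\pi-(\log^{\ssup 3} L_r-c_A)\dist_G(x,z_\pi)\Big),
\end{equation}
with $c_A:=c'_A+c''_A+\log d_{\max}+1$. Taking logarithms then yields \eqref{e:mass_class}, the harmless $\log C$ being absorbed by $c_A$ once $\dist_G(x,z_\pi)\ge 1$.

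The main obstacle is Step~2: identifying the $\log^{\ssup 3} L_r$ rate requires a delicate balance between the Feynman--Kac reward and the Poisson jump cost across the excursions, combined with the spectral bounds of Section~\ref{ss:specbounds}, which is where the ideas of \cite{BKS18,MP16} enter. The hypothesis $\max_\ell\dist_G(\pi_\ell,x)\ge(\log L_r)^\kappa$ is exactly what allows Lemmas~\ref{l:bound_mediumpoints}--\ref{l:boundhighexceedances} to apply (choosing $\beta,\delta\in(\alpha,\kappa)$) and so drives the entropy count of Step~3. Uniformity over $G\in\mathfrak{G}_r$ is inherited from the uniform-in-$G$ statements of Section~\ref{s:prep}; the almost-sure version for fixed $G\in\mathfrak{G}_\infty$ then follows from the Borel--Cantelli portions of those same lemmas.
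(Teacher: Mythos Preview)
Your overall architecture---decompose into equivalence classes, bound each class, count classes---is right and matches the paper's strategy. However, Step~2 contains a genuine gap in how the $\log^{\ssup 3} L_r$ factor arises, and the roles of the two spectral lemmas are inverted.

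The $\log^{\ssup 3} L_r$ does \emph{not} come from a Poisson-jump tail balanced against a dwelling-time budget. It comes from the explicit path evaluation of Lemma~\ref{l:path_eval}: along a fixed exterior path segment, the shifted Feynman--Kac integral factorises as $\prod_i \deg(\pi_i)/(\gamma - \xi(\pi_i) + \deg(\pi_i))$. At a site with $\xi(\pi_i) \le (1-\varepsilon)\, a_{L_r}$ this factor is at most $d_{\max}/(\varepsilon a_{L_r} - A + d_{\max}) \asymp 1/\log\log L_r$, whose logarithm is $-\log^{\ssup 3} L_r + O(1)$. Lemma~\ref{l:bound_mediumpoints} guarantees that all but a vanishing fraction of the exterior steps are through such sites, giving a total cost $\approx k^{r,\varepsilon}_\pi \cdot \log^{\ssup 3} L_r$; this is precisely Lemma~\ref{l:mass_in}. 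Your proposed Poisson cost $\exp(-\sigma\log(\sigma/t))$ depends on $t$, which is a free parameter in the statement, so no balancing argument can extract a $t$-independent $\log^{\ssup 3} L_r$ from it.

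Relatedly, you have swapped the roles of Lemmas~\ref{l:bounds_mass} and~\ref{l:mass_out}. In the paper Lemma~\ref{l:mass_out} is used \emph{inside} the islands (where $|\Lambda|\le C_{r,A}$ is small and $\gamma-\lambda^{\ssup 1}_\Lambda \ge \ee^{-S_r}$), producing the factor $(1 + d_{\max} C_{r,A}/(\gamma-\lambda_{r,A}(\pi)))^m$ in Lemma~\ref{l:fixed_class}. Applying it to exterior excursions would force $\Lambda$ to be essentially $B_r\setminus D_{r,A}$, giving a useless bound. Instead, the paper conditions on the \emph{full} exterior path (not its loop-erasure) and uses the exact formula of Lemma~\ref{l:path_eval}. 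Consequently the paper's equivalence classes are finer than yours: two paths are equivalent only when all their exterior pieces $\check\pi^{\ssup i}$, $\bar\pi$ coincide, not merely their loop-erasures. The passage from $k^{r,\varepsilon}_\pi$ to $\dist_G(x,z_\pi)$ then requires a separate argument (the auxiliary path $\pi_\star$ and Lemma~\ref{l:boundhighexceedances}), which your outline folds into the claim $\sigma(\KK)\ge\dist_G(x,z_\KK)$ without justification.
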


The key to the proof of Proposition~\ref{p:massclass} in Section \ref{ss:proofProp} is Lemma~\ref{l:fixed_class} in Section \ref{ss:equivclasspaths}, whose proof depends on Lemmas \ref{l:path_eval}--\ref{l:mass_in} in Section \ref{ss:mass_fixed_path}. We emphasize that all these results are deterministic, i.e., they hold for any realisation of the potential $\xi$.


\subsection{Mass of the solution along excursions}
\label{ss:mass_fixed_path}

Fix $G=(V,E,\cO) \in \mathfrak{G}_r$. The first step to control the contribution of a path to the total mass is to control the contribution of excursions outside $\Pi_{r,A}$  (recall~\eqref{defPi}).

\begin{lemma}{\bf [Path evaluation]}
\label{l:path_eval}
For $\ell\in\N_0$, $\pi \in \scrP_\ell$ and $\gamma  > \max_{0 \leq i < |\pi|} \{\xi(\pi_i)-\deg(\pi_i)\}$,
\begin{equation}
\label{e:path_eval}
\E_{\pi_0} \left[\exp\left\{\int_0^{T_{\ell}} (\xi(X_s) -  \gamma )\, \textd s\right\} \,\middle|\, \pi^{\ssup {\ell}}(X) = \pi  \right]
= \prod_{i=0}^{\ell-1} \frac{\deg(\pi_i)}{\gamma - [\xi(\pi_i)-\deg(\pi_i)]}.
\end{equation}
\end{lemma}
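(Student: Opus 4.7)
The plan is to reduce the assertion to a direct computation of a product of moment generating functions of independent exponential variables, using the standard representation of the continuous-time random walk $X$ as an alternation of exponential holding times and discrete jumps.

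First I would recall that $X$ is the minimal Markov chain associated with the generator $\Delta_G$: at a vertex $x$, the walk waits an Exp$(\deg(x))$ time and then jumps to a neighbour chosen uniformly at random. A basic consequence of the strong Markov property is that, conditional on the discrete skeleton $\pi^{\ssup{\ell}}(X)=\pi=(\pi_0,\ldots,\pi_\ell)$, the holding times $W_i := T_{i+1}-T_i$, $i=0,\ldots,\ell-1$, are \emph{independent}, with $W_i \sim \mathrm{Exp}(\deg(\pi_i))$. (This is because the jump destinations, given the current state, are independent of the holding times; conditioning on the destinations therefore does not alter the conditional law of the $W_i$.)

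Next I would observe that on each interval $[T_i,T_{i+1})$ the walk sits at $\pi_i$, so $\xi(X_s)=\xi(\pi_i)$ there, and hence
\begin{equation*}
\int_0^{T_\ell}\bigl(\xi(X_s)-\gamma\bigr)\,\textd s \;=\; \sum_{i=0}^{\ell-1}\bigl(\xi(\pi_i)-\gamma\bigr)W_i.
\end{equation*}
Taking the conditional expectation and using the independence of the $W_i$ reduces the left-hand side of \eqref{e:path_eval} to
\begin{equation*}
\prod_{i=0}^{\ell-1}\Expec\!\left[\exp\bigl((\xi(\pi_i)-\gamma)W_i\bigr)\right].
\end{equation*}

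Finally, for any $\lambda>0$ and $t<\lambda$ one has $\Expec[e^{tW}]=\lambda/(\lambda-t)$ when $W\sim\mathrm{Exp}(\lambda)$. Applying this with $\lambda=\deg(\pi_i)$ and $t=\xi(\pi_i)-\gamma$ is legal precisely under the hypothesis $\gamma>\xi(\pi_i)-\deg(\pi_i)$ for each $0\le i<|\pi|$, yielding the factor $\deg(\pi_i)/\bigl(\gamma-[\xi(\pi_i)-\deg(\pi_i)]\bigr)$ and hence the identity \eqref{e:path_eval}.

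There is no real obstacle here; the only subtle point to articulate cleanly is the conditional independence of the holding times given the discrete path, which I would either invoke from a standard reference on continuous-time Markov chains or derive in one line from the fact that the joint density of $(W_0,\ldots,W_{\ell-1})$ and the successive jump destinations factorises.
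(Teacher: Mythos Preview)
Your proposal is correct and follows the same approach as the paper: both argue that, conditional on the discrete skeleton $\pi^{\ssup{\ell}}(X)=\pi$, the holding times are independent with $W_i\sim\mathrm{Exp}(\deg(\pi_i))$, and then compute the product of exponential moment generating functions. Your write-up is simply a more detailed version of the paper's two-sentence proof.
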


\begin{proof}
The left-hand side of \eqref{e:path_eval} can be evaluated by using the fact that $T_\ell$ is the sum of $\ell$ independent Exp($\deg(\pi_i)$) random variables that are independent of $\pi^{\ssup {\ell}}(X)$. The condition on $\gamma$ ensures that all integrals are finite.
\end{proof}

For a path $\pi \in \scrP$ and $\varepsilon \in (0,1)$, we write
\begin{equation}
\label{e:def_Mpi}
M^{r,\varepsilon}_\pi := \big| \bigl\{0 \leq i < |\pi| \colon\, \xi(\pi_i) \le (1-\varepsilon)a_{L_r}\bigr\}\big|,
\end{equation}
with the interpretation that $M^{r,\varepsilon}_\pi = 0$ if $|\pi|=0$.

\begin{lemma}{\bf [Mass of excursions]}
\label{l:mass_in}
For every $A, \varepsilon>0$ there exist $c > 0$ and $n_0 \in \N$ such that, for all $r \ge n_0$, all $\gamma > a_{L_r} - A$ and all $\pi \in \scrP$ satisfying $\pi_i \notin \Pi_{r,A}$ for all $0 \leq i < \ell:=|\pi|$,
\begin{equation}
\label{e:mass_in}
\E_{\pi_0} \left[ \exp \left\{ \int_0^{T_{\ell}}(\xi(X_t) - \gamma)\, \textd s \right\} \,\middle|\, \pi^{\ssup {\ell}}(X) = \pi \right] 
\le q_A^{\ell} \texte^{\left(c -\log^{\ssup 3} L_r \right) M^{r,\varepsilon}_\pi},
\end{equation}
where $q_A := (1+A/d_{\max})^{-1}$. Note that $\pi_{\ell} \in \Pi_{r,A}$ is allowed.
\end{lemma}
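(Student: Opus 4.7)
The approach is to apply Lemma~\ref{l:path_eval} directly to rewrite the conditional expectation as an explicit product, and then bound the resulting product factor by factor. Two regimes on $\xi(\pi_i)$ contribute separately: the coarse regime $\pi_i \notin \Pi_{r,A}$ (valid for \emph{every} $i<\ell$ by hypothesis) delivers the base factor $q_A$, and the sharper regime $\xi(\pi_i) \leq (1-\varepsilon) a_{L_r}$ (valid for the $M^{r,\varepsilon}_\pi$ indices counted in \eqref{e:def_Mpi}) delivers an additional factor of order $1/\log\log L_r$, which is precisely $\exp(c - \log^{\ssup 3} L_r)$ up to a multiplicative constant.

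Concretely, I would first verify the hypothesis of Lemma~\ref{l:path_eval}: because $\pi_i \notin \Pi_{r,A}$ forces $\xi(\pi_i) \leq a_{L_r} - 2A$ while $\gamma > a_{L_r}-A$, one has $\gamma - \xi(\pi_i) > A > 0$, so a fortiori $\gamma > \xi(\pi_i) - \deg(\pi_i)$ for every $0 \le i < \ell$. Applying that lemma and writing $b_i := (\gamma - \xi(\pi_i))/\deg(\pi_i)$, the left-hand side of \eqref{e:mass_in} becomes $\prod_{i=0}^{\ell-1}(1+b_i)^{-1}$.

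Next I would estimate each factor. For every index $i$, $\gamma - \xi(\pi_i) > A$ and $\deg(\pi_i) \leq d_{\max}$ give $b_i \geq A/d_{\max}$, so each factor is at most $q_A$; this already yields the crude bound $q_A^\ell$. At indices counted by $M^{r,\varepsilon}_\pi$, however, $\gamma - \xi(\pi_i) \geq \varepsilon a_{L_r} - A$, and since $a_{L_r} = \varrho \log\log(L_r \vee \texte^\texte)$ diverges, for $r$ large enough (this fixes $n_0$) one has $\gamma - \xi(\pi_i) \geq \tfrac12 \varepsilon \varrho \log\log L_r$. Then $b_i \geq \tfrac{\varepsilon \varrho}{2 d_{\max}} \log\log L_r$, so the corresponding factor is bounded by $C/\log\log L_r$ for a constant $C = C(A,\varepsilon,d_{\max},\varrho)$. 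Rewriting
\[
\frac{C}{\log\log L_r} \;=\; q_A \cdot \exp\!\bigl(\log(C/q_A) - \log^{\ssup 3} L_r\bigr)
\]
and multiplying the bounds over the two classes of indices (applying the crude factor $q_A$ to all $\ell$ indices, and the extra factor $\exp(c - \log^{\ssup 3} L_r)$ to the $M^{r,\varepsilon}_\pi$ low-peak ones) gives \eqref{e:mass_in} with $c := \log(C/q_A)$.

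There is no genuine obstacle here: the proof is essentially a term-by-term product estimate made possible by the exact formula in Lemma~\ref{l:path_eval}. The only subtleties are (i) making sure $r$ is large enough that $\varepsilon a_{L_r}/2 \geq A$ so that the lower bound on $\gamma - \xi(\pi_i)$ in the sharpening step is $\Theta(\log\log L_r)$, and (ii) keeping track of which factor comes with which base $q_A$ versus the improved $q_A \exp(c - \log^{\ssup 3} L_r)$, so that one recovers the stated bound rather than an unnecessarily weaker one.
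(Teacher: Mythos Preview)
Your proposal is correct and follows essentially the same approach as the paper's proof: apply Lemma~\ref{l:path_eval} to obtain the explicit product, bound every factor by $q_A$ using $\xi(\pi_i)\le a_{L_r}-2A$, and for the indices counted by $M^{r,\varepsilon}_\pi$ extract the additional factor of order $1/\log\log L_r$. The only cosmetic differences are your introduction of the shorthand $b_i$ and your explicit verification of the hypothesis of Lemma~\ref{l:path_eval}; the resulting constant $c$ agrees with the paper's up to an inessential ``$1\vee$''.
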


\begin{proof}
By our assumptions on $\pi$ and $\gamma$, we can use Lemma~\ref{l:path_eval}. Splitting the product in the right-hand side of \eqref{e:path_eval} according to whether $\xi(\pi_i) \geq (1-\varepsilon)a_{L_r}$ or not, and using that $\xi(\pi_i) \le a_{L_r} - 2A$ for all  $0 \leq i < |\pi|$, we bound the left-hand side of \eqref{e:mass_in} by
\begin{equation}
\label{e:mass_in1}
q_A^{\ell} \left[q_A \frac{\varepsilon a_{L_r} - A}{d_{\max}}\right]^{-|\{0 \leq i < \ell \colon\, \xi(\pi_i) 
\le (1-\varepsilon)a_{L_r}\}|}.
\end{equation}
Since $a_{L_r} = \varrho \log\log L_r \geq \varrho \log \log r$, for large $r$ the number within square brackets in \eqref{e:mass_in1} is at least $q_A \varepsilon \varrho (\log\log L_r) /2d_{\max} > 1$. Hence \eqref{e:mass_in} holds with $c := \log (1 \vee 2d_{\max} (q_A \varepsilon \varrho)^{-1})$.
\end{proof}


\subsection{Equivalence classes of paths}
\label{ss:equivclasspaths}

We follow \cite[Section 6.2]{BKS18}. Note that the distance between $\Pi_{r,A}$ and $D_{r,A}^\cc$ in $G$ is at least $S_r = (\log L_r)^\alpha$.

\begin{definition}{\bf [Concatenation of paths]} {\rm (a)}
\label{def:concat}
When $\pi$ and $\pi'$ are two paths in $\scrP$ with $\pi_{|\pi|} = \pi'_0$, 
we define their \emph{concatenation} as
\begin{equation}
\label{def_concat}
\pi \circ \pi' := (\pi_0, \ldots, \pi_{|\pi|}, \pi'_1, \ldots, \pi'_{|\pi'|}) \in \scrP.
\end{equation}
Note that $|\pi \circ \pi'| = |\pi| + |\pi'|$. 

\medskip\noindent
{\rm (b)} When $\pi_{|\pi|} \neq \pi'_0$, we can still define the \emph{shifted concatenation} of $\pi$ and $\pi'$ as $\pi \circ \hat{\pi}'$, where $\hat{\pi}' := (\pi_{|\pi|}, \pi_{|\pi|}  + \pi'_1 - \pi'_0, \ldots, \pi_{|\pi|} + \pi'_{|\pi'|} - \pi'_0)$. The shifted concatenation of multiple paths is defined inductively via associativity. 
\end{definition}

Now, if a path $\pi \in \scrP$ intersects $\Pi_{r,A}$, then it can be decomposed into an initial path, 
a sequence of excursions between $\Pi_{r,A}$ and $D_{r,A}^\cc$, and a terminal path. 
More precisely, there exists $m_\pi \in \N $ such that
\begin{equation}
\label{e:concat1}
\pi = \check{\pi}^{\ssup 1} \circ \hat{\pi}^{\ssup 1} \circ \cdots \circ \check{\pi}^{\ssup {m_\pi}} 
\circ \hat{\pi}^{\ssup {m_\pi}} \circ \bar{\pi},
\end{equation}
where the paths in \eqref{e:concat1} satisfy
\begin{equation}
\label{e:concat2}
\begin{alignedat}{9}
\check{\pi}^{\ssup 1} & \in  \scrP(V, \Pi_{r,A}) 
&\qquad\text{with}\qquad& 
\check{\pi}^{\ssup 1}_i & \notin  \Pi_{r,A}, & \quad\, 0\le i < |\check{\pi}^{\ssup 1}|, 
\\
\hat{\pi}^{\ssup k} & \in  \scrP(\Pi_{r,A}, D_{r,A}^\cc) 
&\qquad\text{with}\qquad& 
\hat{\pi}^{\ssup k}_i & \in  D_{r, A}, & \quad\, 0\le i < |\hat{\pi}^{\ssup k}|, \; 1 \le k \le m_{\pi} - 1, 
\\
\check{\pi}^{\ssup k} & \in  \scrP(D_{r,A}^\cc, \Pi_{r,A}) 
&\qquad\text{with}\qquad& 
\check{\pi}^{\ssup k}_i & \notin  \Pi_{r,A}, & \quad\, 0\le i < |\check{\pi}^{\ssup k}|, \; 2 \le k \le m_\pi, 
\\
\hat{\pi}^{\ssup {m_\pi}} & \in  \scrP(\Pi_{r,A}, V) 
&\qquad\text{with}\qquad& 
\hat{\pi}^{\ssup {m_\pi}}_i & \in  D_{r,A}, & \quad\, 0\le i < |\hat{\pi}^{\ssup {m_\pi}}|, 
\end{alignedat}
\end{equation}
while
\begin{equation}
\label{e:concat3}
\begin{array}{ll} 
\bar{\pi} \in \scrP(D_{r,A}^\cc, V) \text{ and } \bar{\pi}_i \notin \Pi_{r,A} \; \forall\, i \ge 0 
& \text{ if } \hat{\pi}^{\ssup {m_\pi}} \in \scrP(\Pi_{r,A}, D^\cc_{r, A}), \\
\bar{\pi}_0 \in D_{r,A}, |\bar{\pi}| = 0  & \text{ otherwise.}
\end{array}
\end{equation}
Note that the decomposition in \eqref{e:concat1}--\eqref{e:concat3} is unique, 
and that the paths $\check{\pi}^{\ssup 1}$, $\hat{\pi}^{\ssup {m_\pi}}$ and $\bar{\pi}$ can have zero length. 
If $\pi$ is contained in $B_r$, then so are all the paths in the decomposition. 

Whenever $\supp(\pi) \cap \Pi_{r,A} \ne \emptyset$ and $\varepsilon > 0$, we define
\begin{align}
\label{e:defnpikpi}
s_\pi := \sum_{i=1}^{m_\pi} |\check{\pi}^{\ssup i}| + |\bar{\pi}|, \qquad
k^{r,\varepsilon}_\pi := \sum_{i=1}^{m_\pi} M^{r,\varepsilon}_{\check{\pi}^{\ssup i}} + M^{r,\varepsilon}_{\bar{\pi}} 
\end{align}
to be the total time spent in exterior excursions, respectively, 
on moderately low points of the potential visited by exterior excursions (without their last point). 

In case $\supp(\pi) \cap \Pi_{r,A} = \emptyset$, we set $m_\pi := 0$, $s_\pi := |\pi|$ and $k^{r,\varepsilon}_\pi := M^{r,\varepsilon}_{\pi}$. Recall from \eqref{e:deflambdaLApi} that, in this case, $\lambda_{r,A}(\pi) = -\infty$. 

We say that $\pi, \pi' \in \scrP$ are \emph{equivalent}, written $\pi' \sim \pi$, if $m_{\pi} = m_{\pi'}$, $\check{\pi}'^{\ssup i}=\check{\pi}^{\ssup i}$ for all $i=1,\ldots,m_{\pi}$, and $\bar{\pi}' = \bar{\pi}$. If $\pi' \sim \pi$, then $s_{\pi'}$, $k^{r, \varepsilon}_{\pi'}$ and $\lambda_{r,A}(\pi')$ are all equal to the counterparts for $\pi$.

To state our key lemma, we define, for $m,s \in \N_0$,
\begin{equation}
\label{e:defPmn}
\scrP^{(m,s)} = \left\{ \pi \in \scrP \colon\, m_\pi = m, s_\pi = s \right\},
\end{equation}
and denote by
\begin{equation}
\label{def_CLA}
C_{r,A}:= \max \{|\CC| \colon\, \CC \in \mathfrak{C}_{r,A}\}
\end{equation}
the maximal size of the islands in $\mathfrak{C}_{r,A}$.

\begin{lemma}{\bf [Mass of an equivalence class]}
\label{l:fixed_class}
For every $A,\varepsilon > 0$ there exist $c>0$ and $r_0 \in \N$ such that, for all $r \ge r_0$, all $m,s \in \N_0$, all $\pi \in \scrP^{(m,s)}$ with $\supp(\pi) \subset B_r$, all $\gamma > \lambda_{r,A}(\pi) \vee (a_{L_r} -A)$ and all $t \ge 0$,
\begin{multline}
\label{e:fixed_class}
\qquad
\E_{\pi_0} \left[ \texte^{\int_0^t (\xi(X_u) - \gamma)\, \textd u}\, \1_{\{\pi(X_{[0,t]}) \sim \pi\}} \right]  
\\\le \left(C_{r,A}^{1/2} \right)^{\1_{\{m>0\}}} \left(1+\frac{ d_{\max} \, C_{r,A}}{\gamma - \lambda_{r,A}(\pi)} \right)^m \left(\frac{q_A}{d_{\max}}\right)^s \texte^{\left(c-\log^{\ssup 3} L_r \right) k^{r,\varepsilon}_{\pi}}.
\end{multline}
\end{lemma}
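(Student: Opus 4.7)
The plan is to decompose the event $\{\pi(X_{[0,t]})\sim \pi\}$ along the skeleton of $\pi$ given by \eqref{e:concat1}--\eqref{e:concat3}, factorise the Feynman–Kac expectation by the strong Markov property, and then control the exterior pieces explicitly via Lemma~\ref{l:path_eval}/Lemma~\ref{l:mass_in} and the interior excursions spectrally via Lemmas~\ref{l:mass_out} and \ref{l:bounds_mass}.

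First I would introduce the stopping times $0=\tau_0<\sigma_1<\tau_1<\cdots<\sigma_m$, where $\sigma_k$ is the $k$-th entry time of $X$ into $\Pi_{r,A}$ and $\tau_k$ is the first exit of $X$ from $D_{r,A}$ strictly after $\sigma_k$. On the event $\{\pi(X_{[0,t]})\sim\pi\}$ the embedded discrete path of $X$ on each $[\tau_{k-1},\sigma_k]$ agrees with $\check{\pi}^{\ssup k}$, while on $[\sigma_k,\tau_k]$ the walk stays inside the island $\CC_k\in\mathfrak{C}_{r,A}$ containing $\check{\pi}^{\ssup k}_{|\check{\pi}^{\ssup k}|}\in\Pi_{r,A}$ and exits (for $k<m$) at $\check{\pi}^{\ssup{k+1}}_0$; if $\bar{\pi}$ is non-trivial an analogous exterior piece follows $\tau_m$, and otherwise $X$ remains in $\CC_m$ up to time $t$, ending at $\bar{\pi}_0\in D_{r,A}$. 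Iterating the strong Markov property at $\sigma_k,\tau_k$ turns $\E_{\pi_0}[\texte^{\int_0^t(\xi-\gamma)\textd s}\1_{\{\pi(X_{[0,t]})\sim\pi\}}]$ into a product over exterior pieces and interior excursions, since $\int_0^t(\xi-\gamma)\textd s$ splits additively along $[\tau_{k-1},\sigma_k]$ and $[\sigma_k,\tau_k]$.

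Next I would handle the exterior pieces: Lemma~\ref{l:path_eval} (multiplied by the embedded-chain probability $\prod_{i<|\check{\pi}^{\ssup k}|}\deg(\check{\pi}^{\ssup k}_i)^{-1}$) gives
\begin{equation}
\E_{\check{\pi}^{\ssup k}_0}\!\Bigl[\texte^{\int_0^{T_{|\check{\pi}^{\ssup k}|}}(\xi-\gamma)\textd s}\1_{\{\pi^{(|\check{\pi}^{\ssup k}|)}(X)=\check{\pi}^{\ssup k}\}}\Bigr]=\prod_{i=0}^{|\check{\pi}^{\ssup k}|-1}\frac{1}{\gamma-\xi(\check{\pi}^{\ssup k}_i)+\deg(\check{\pi}^{\ssup k}_i)}.
\end{equation}
Because $\check{\pi}^{\ssup k}_i\notin\Pi_{r,A}$ and $\gamma>a_{L_r}-A$, each factor is controlled by the two-case split used in the proof of Lemma~\ref{l:mass_in}: moderately-low points ($\xi\le(1-\varepsilon)a_{L_r}$) give a factor $\leq \texte^{c-\log^{\ssup 3}L_r}$, while the remaining points give the bound $q_A/d_{\max}$ coming from $\deg/(\gamma-\xi+\deg)\le q_A$ combined with the $1/\deg$ from the path probability. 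The analogous estimate applies to $\bar{\pi}$ in the non-trivial case. Multiplying across all exterior pieces then produces the factor $(q_A/d_{\max})^s\,\texte^{(c-\log^{\ssup 3}L_r)k^{r,\varepsilon}_\pi}$ by the definitions of $s_\pi$ and $k^{r,\varepsilon}_\pi$ in \eqref{e:defnpikpi}.

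For the interior excursions I would drop the constraint on the exit vertex (which only increases the expectation) and apply Lemma~\ref{l:mass_out} to $\Lambda=\CC_k$: since $\CC_k\cap\supp(\pi)\cap\Pi_{r,A}\neq\emptyset$, one has $\lambda^{\ssup 1}_{\CC_k}(\xi;G)\le\lambda_{r,A}(\pi)<\gamma$, giving a contribution $\le 1+d_{\max}C_{r,A}/(\gamma-\lambda_{r,A}(\pi))$ per excursion, hence the $m$-th power factor. If the path ends inside an island ($\bar{\pi}$ trivial), Lemma~\ref{l:bounds_mass} applied in $\CC_m$ bounds the final sojourn from $\sigma_m$ to $t$ by $\texte^{(t-\sigma_m)(\lambda^{\ssup 1}_{\CC_m}-\gamma)}|\CC_m|^{1/2}\le C_{r,A}^{1/2}$, producing the $C_{r,A}^{1/2}$ prefactor on $\{m>0\}$. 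Collecting every factor and choosing $r_0$ so large that the threshold from Lemma~\ref{l:mass_in} applies will yield \eqref{e:fixed_class}.

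The main obstacle I expect is making the strong Markov factorisation fully rigorous: the partition of $[0,t]$ by $(\sigma_k,\tau_k)$ must be shown to coincide exactly, on $\{\pi(X_{[0,t]})\sim\pi\}$, with the alternation between the fixed exterior paths $\check{\pi}^{\ssup k}$ and the free interior excursions in $D_{r,A}$, so that conditioning iteratively at these stopping times does not create boundary mismatches. The subtlety is that $\check{\pi}^{\ssup k}$ may transit through $D_{r,A}\setminus\Pi_{r,A}$ without triggering an interior excursion; defining the stopping times as entries to $\Pi_{r,A}$ (not $D_{r,A}$) and exits from $D_{r,A}$ (not $\Pi_{r,A}$) is exactly what makes the decomposition consistent with the definition of equivalence, and one has to verify this carefully when writing out the Markovian factorisation.
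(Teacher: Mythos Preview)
Your proposal is correct and uses exactly the same ingredients as the paper's proof: Lemma~\ref{l:mass_in} for the exterior pieces $\check{\pi}^{\ssup k}$ and $\bar{\pi}$, Lemma~\ref{l:mass_out} for interior excursions that exit $D_{r,A}$, and Lemma~\ref{l:bounds_mass} for the terminal sojourn when $\bar{\pi}$ is trivial. The only difference is organizational: rather than introducing all stopping times $(\sigma_k,\tau_k)$ at once, the paper proceeds by induction on $m$, peeling off $\check{\pi}^{\ssup 1}$ and the first interior excursion via one strong Markov step at $\sigma:=\inf\{u>T_{|\check{\pi}^{\ssup 1}|}: X_u\notin D_{r,A}\}$ and then applying the induction hypothesis to $\pi':=\check{\pi}^{\ssup 2}\circ\cdots\circ\bar{\pi}\in\scrP^{(m-1,s-|\check{\pi}^{\ssup 1}|)}$; this neatly sidesteps the global bookkeeping issue you flag at the end. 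If you pursue your direct decomposition, note that $\sigma_k$ must be defined recursively as the first hitting time of $\Pi_{r,A}$ \emph{after} $\tau_{k-1}$ rather than as the $k$-th entry overall, since the walk may re-enter $\Pi_{r,A}$ multiple times during a single interior excursion $\hat{\pi}^{\ssup k}$.
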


\begin{proof}
Fix $A, \varepsilon>0$ and let $c>0$, $n_0 \in \N$ be as given by Lemma~\ref{l:mass_in}. Set
\begin{equation}
I_a^b := \texte^{\int_a^b(\xi(X_u) - \gamma ) \textd u}, \qquad
0 \le a \le b <\infty.
\end{equation} 
We use induction on $m$. Suppose that $m=1$, let $\ell := |\check{\pi}^{\ssup 1}|$. There are two possibilities: either $\bar{\pi}_0$ belongs to $D_{r, A}$ or not. 
First we consider the case $\bar{\pi}_0\in D_{r,A}$, which implies that $|\bar{\pi}|=0$. By the strong Markov property,
\begin{align}
\label{e:fixedclass1}
\E_{\pi_0} \Bigl[I_0^t &\1_{\{ \pi(X_{[0,t]}) \sim \pi \}} \Bigr]
\leq \E_{\pi_0} \left[I_0^{T_\ell} I_{T_\ell}^t  \1_{\{ \pi^{\ssup \ell}(X) 
= \check{\pi}^{\ssup 1}\}}\1_{\{T_\ell < t\}}\1_{\{X_{u+T_\ell} \in D_{r,A} \, 
\forall u\in[0, t - T_\ell]\}} \right] \nonumber\\
&= \E_{\pi_0} \left[I_0^{T_\ell} \1_{\{ \pi^{\ssup \ell}(X) = \check{\pi}^{\ssup 1}\}}
\1_{\{ T_\ell < t \}} \left(\E_{\check{\pi}^{\ssup 1}_{\ell}} \left[ I_{0}^{t-u} \1_{\{\tau_{D^\cc_{r,A}} > t-u \}} \right] \right)_{u=T_\ell} \right].
\end{align}
Put $z=\check{\pi}^{\ssup 1}_{\ell}$. Since $z \in \Pi_{r,A}$, we may write $\CC_z$ to denote the island in $\mathfrak{C}_{r,A}$ containing $z$.  Since $\tau_{D^\cc_{r,A}} = \tau_{\CC^\cc_z}$ $\P_z$-a.s., Lemma~\ref{l:bounds_mass} and the hypothesis on $\gamma$ allow us to bound the inner expectation in \eqref{e:fixedclass1} by $|\CC_z|^{1/2}$.  Applying Lemma~\ref{l:mass_in}, we further bound \eqref{e:fixedclass1} by
\begin{equation}
\label{e:fixedclass2}
|\CC_z|^{1/2} \E_{\pi_0} \left[ I_0^{T_\ell} \1_{\{ \pi^{\ssup \ell}(X) = \check{\pi}^{\ssup 1} \}}\right]
\le C_{r,A}^{1/2} \left(\frac{q_A}{d_{\max}}\right)^{\ell} \texte^{\left(c-\log^{\ssup 3} L_r \right) 
M^{r,\varepsilon}_{\check{\pi}^{\ssup 1}}},
\end{equation}
which proves \eqref{e:fixed_class} for $m=1$ and $\bar{\pi}_0\in D_{r,A}$.

Next consider the case $\bar{\pi}_0 \in D^\cc_{r,A}$. Abbreviating $\sigma := \inf\{ u> T_\ell \colon\, X_u \notin D_{r,A} \}$, write 
\begin{align}
\label{e:fixedclass3}
\E_{\pi_0} \left[I_0^t \1_{\{ \pi(X_{[0,t]}) \sim \pi \}} \right]
\le \E_{\pi_0} \left[I_0^{\sigma} \1_{\{ \pi^{\ssup \ell}(X) = \check{\pi}^{\ssup 1}, \, \sigma < t \}} 
\Big( \E_{\bar{\pi}_0} \left[I_0^{t-u} \1_{\{\pi(X_{[0,t-u]}) = \bar{\pi}\}} \right] \Big)_{u=\sigma} \right].
\end{align}
Let $\ell_* := |\bar{\pi}|$ and note that, since $\bar{\pi}_{\ell_*} \notin \Pi_{r,A}$, by the hypothesis on $\gamma$ we have
\begin{align}
\label{e:fixedclass4}
\E_{\bar{\pi}_0} \left[I_0^{t-u} \1_{\{\pi(X_{[0,t-u]}) = \bar{\pi}\}} \right]
\le \E_{\bar{\pi}_0} \left[I_0^{T_{\ell_*}}\1_{\{\pi^{\ssup{\ell_*}}(X) = \bar{\pi}\}} \right]
\le \left(\frac{q_A}{d_{\max}}\right)^{\ell_*} \texte^{\left(c-\log^{\ssup 3}L_r \right) M^{r,\varepsilon}_{\bar{\pi}}}
\end{align}
where the second inequality holds by Lemma~\ref{l:mass_in}.
On the other hand, by Lemmas~\ref{l:mass_out} and~\ref{l:mass_in},
\begin{align}
\label{e:fixedclass5}
\E_{\pi_0} \left[I_0^{\sigma} \1_{\{ \pi^{\ssup \ell}(X) = \check{\pi}^{\ssup 1}\}} \right]
& = \E_{\pi_0} \left[I_0^{T_\ell} \1_{\{ \pi^{\ssup \ell}(X) = \check{\pi}^{\ssup 1} \}} \right] 
\E_z \left[I_0^{\tau_{\CC^\cc_z}} \right]\nonumber\\
& \le \left( 1 + \frac{d_{\max} \, C_{r,A}}{\gamma  - \lambda_{r,A}(\pi)} \right) \left(\frac{q_A}{d_{\max}}\right)^{\ell} 
\texte^{\left(c-\log^{\ssup 3} L_r \right) M^{r,\varepsilon}_{\check{\pi}^{\ssup 1}}}.
\end{align}
Putting together~\eqref{e:fixedclass3}--\eqref{e:fixedclass5}, we complete the proof of the case $m=1$.
The case $m=0$ follows from \eqref{e:fixedclass4} after we replace $\bar{\pi}$ by $\pi$ and $t-u$ by $t$.

Suppose now that the claim is proved for some $m \ge 1$, and let $\pi \in \scrP^{(m+1,s)}$. Define $\pi' := \check{\pi}^{\ssup 2} \circ \hat{\pi}^{\ssup 2} \circ \cdots \circ \check{\pi}^{\ssup {m+1}} \circ \hat{\pi}^{\ssup {m+1}} \circ \bar{\pi}$. Then $\pi' \in \scrP^{(m,s')}$, where $s = s'+|\check{\pi}^{\ssup 1}|$ and $k^{r,\varepsilon}_\pi = M^{r,\varepsilon}_{\check{\pi}^{\ssup 1}}+k^{r,\varepsilon}_{\pi'}$. Setting $\ell := |\check{\pi}^{\ssup 1}|$, $\sigma := \inf\{ u> T_\ell \colon\; X_u \notin D_{r,A}\}$ and $x:=\check{\pi}^{\ssup 2}_0$, we get 
\begin{equation}
\label{e:fixedclass6}
\E_{\pi_0} \left[ I_0^t \1_{\{\pi(X_{0,t}) \sim \pi \}}\right]
\le \E_{\pi_0} \left[I_0^\sigma \1_{\{\pi^{\ssup \ell}(X) 
= \check{\pi}^{\ssup 1},\, \sigma < t\}} \Big(\E_x \left[ I_0^{t-u} \1_{\{\pi(X_{0,t-u}) 
\sim \pi'\}}\right] \Big)_{s=\sigma} \right],
\end{equation}
from which \eqref{e:fixed_class} follows via the induction hypothesis and \eqref{e:fixedclass5}. 
\end{proof}


\subsection{Proof of Proposition~\ref{p:massclass}}
\label{ss:proofProp}

\begin{proof}
The proof is based on Lemma~\ref{l:fixed_class}. First define
\begin{equation}
\label{e:defc0A0}
c_0 := 1 + 3 \log \log d_{\max}, \qquad A_0 := d_{\max} \left( \ee^{3 c_0} - 1\right).
\end{equation}
Fix $A>A_0$, $\beta < \alpha$ and $\varepsilon \in (0,\beta/2)$ as in Lemma~\ref{l:bound_mediumpoints}. Let $r_0 \in \N$ be as given by Lemma~\ref{l:fixed_class}, and take $r \ge r_0$ so large that the conclusions of Lemmas~\ref{l:size_comps}--\ref{l:bound_mediumpoints} hold, i.e., assume that the events $\BB_r$ from both lemmas do not occur with either $G = (V,E,\cO) \in \mathfrak{G}_r$ or $G \in \mathfrak{G}_\infty$ accordingly. Fix $x \in B_r$. Recall the definitions of $C_{r,A}$ and $\scrP^{(m,s)}$. Noting that the relation $\sim$ defined below \eqref{e:defnpikpi} is an equivalence relation in $\scrP^{(m,s)}$, we define
\begin{equation}
\label{e:propmassclass2}
\widetilde{\scrP}^{(m,s)}_x := \big\{\text{equivalence classes of the paths in } \scrP(x,V) \cap \scrP^{(m,s)}\big\}.
\end{equation}

\begin{lemma}{\bf [Bound equivalence classes]}
\label{l:propmassclass3}
$|\widetilde{\scrP}^{(m,s)}_x| \le [2 d_{\max} C_{r,A}]^m d_{\max}^s$ for all $m,s \in \N_0$.
\end{lemma}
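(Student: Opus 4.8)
The plan is to count the equivalence classes directly from the canonical decomposition \eqref{e:concat1}--\eqref{e:concat3}. By the definition of $\sim$ (given below \eqref{e:defnpikpi}), an equivalence class of paths in $\scrP(x,V)\cap\scrP^{(m,s)}$ is uniquely determined by the tuple $(\check{\pi}^{\ssup 1},\dots,\check{\pi}^{\ssup m},\bar{\pi})$, so it suffices to bound the number of admissible such tuples. First I would record two elementary structural facts. For an island $\CC\in\mathfrak{C}_{r,A}$, the outer boundary $\partial\CC=\{y\notin\CC:\exists z\in\CC,\ z\sim y\}$ is contained in $D_{r,A}^\cc$ (a $D_{r,A}$-neighbour of a point of $\CC$ would lie in the same component $\CC$), and $|\CC\cup\partial\CC|\le(1+d_{\max})|\CC|\le 2d_{\max}C_{r,A}$ by bounded degree. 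Moreover, the interior excursion $\hat{\pi}^{\ssup k}$ that connects $\check{\pi}^{\ssup k}$ to the next piece starts at $z_k:=\check{\pi}^{\ssup k}_{|\check{\pi}^{\ssup k}|}\in\Pi_{r,A}$ and stays in $D_{r,A}$ until its last step, hence visits only the single island $\CC_{z_k}\ni z_k$, so its endpoint --- which is the starting vertex of $\check{\pi}^{\ssup{k+1}}$ for $k<m$, and for $k=m$ is either the starting vertex of $\bar{\pi}$ or (when $|\bar{\pi}|=0$) equals $\bar{\pi}$ itself --- lies in $\CC_{z_k}\cup\partial\CC_{z_k}$.

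With this in hand I would encode an admissible tuple as a sequence of moves issued from $x$: a \emph{normal step}, which selects one of at most $d_{\max}$ neighbours of the current vertex, and a \emph{teleport}, which selects a vertex of $\CC_{z_k}\cup\partial\CC_{z_k}$, i.e.\ at most $2d_{\max}C_{r,A}$ choices. Concretely, one follows $\check{\pi}^{\ssup 1}$ by $|\check{\pi}^{\ssup 1}|$ normal steps, teleports to the start of $\check{\pi}^{\ssup 2}$, follows $\check{\pi}^{\ssup 2}$ by normal steps, teleports, and so on; after the $m$-th teleport one follows $\bar{\pi}$ by $|\bar{\pi}|$ normal steps. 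The total number of normal steps is $\sum_{i=1}^m|\check{\pi}^{\ssup i}|+|\bar{\pi}|=s$ and the total number of teleports is exactly $m$. The crucial point is that the \emph{positions} of the teleports within this sequence are not an extra degree of freedom: by \eqref{e:concat2} the piece $\check{\pi}^{\ssup k}$ avoids $\Pi_{r,A}$ until its last vertex, so $\check{\pi}^{\ssup k}$ ends --- and the $k$-th teleport is performed --- exactly when the walk first (re-)enters $\Pi_{r,A}$; and since $m$ and $s$ are fixed, after the $m$-th teleport one simply executes the remaining $s-\sum_{i\le m}|\check{\pi}^{\ssup i}|$ normal steps. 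Hence an admissible tuple is completely determined by the outcomes of $s$ normal steps and $m$ teleports, which yields $|\widetilde{\scrP}^{(m,s)}_x|\le (2d_{\max}C_{r,A})^m\,d_{\max}^s$. (For $m=0$ this reads $d_{\max}^s$, which is just the number of length-$s$ nearest-neighbour paths starting at $x$, consistent with the convention $\scrP^{(0,s)}=\{\pi:\supp(\pi)\cap\Pi_{r,A}=\emptyset,\ |\pi|=s\}$.)

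I expect the only genuine obstacle to be precisely this bookkeeping observation. A naive count that first chooses the lengths $(|\check{\pi}^{\ssup 1}|,\dots,|\check{\pi}^{\ssup m}|,|\bar{\pi}|)$ and then the paths would introduce a spurious combinatorial factor $\binom{s+m}{m}$; avoiding it requires noticing that the decomposition \eqref{e:concat1}--\eqref{e:concat3} is read off from the vertices visited, so the way $s$ splits among the pieces carries no independent information. The remaining ingredients --- that $\partial\CC\subset D_{r,A}^\cc$, that an interior excursion stays within one island, and the degree bound $|\partial\CC|\le d_{\max}|\CC|$ --- are routine given the definitions in Section~\ref{ss:equivclasspaths}.
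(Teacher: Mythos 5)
Your proof is correct and matches the paper's argument in essence: both encode an equivalence class by the sequence of local (edge) choices for the exterior pieces $\check\pi^{\ssup 1},\dots,\check\pi^{\ssup m},\bar\pi$ (at most $d_{\max}^s$ possibilities) together with one landing vertex in $\CC\cup\partial\CC$ per transit through an island (at most $(2d_{\max}C_{r,A})^m$ possibilities), and both use that the splitting of $s$ among the pieces is recovered for free from the successive first-entrance times to $\Pi_{r,A}$, which is exactly what kills the spurious $\binom{s+m}{m}$ factor you identified. The only cosmetic difference is that the paper packages the edge choices as the \emph{shifted concatenation} $\widetilde\pi\in\scrP_s(x,V)$ -- a device inherited from the $\Z^d$ setting of \cite{BKS18} whose literal formula $\pi_{|\pi|}+\pi'_i-\pi'_0$ uses additive structure unavailable on a tree -- whereas your ``sequence of local moves'' formulation is the graph-robust way of saying the same thing.
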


\begin{proof}
The estimate is clear when $m=0$. To prove that it holds for $m \ge 1$, 
write $\partial \Lambda := \{z \notin \Lambda \colon\, \dist_{G}(z, \Lambda)=1\}$ for $\Lambda \subset V$. 
Then $|\partial \CC \cup \CC| \leq (d_{\max}+1) |\CC| \leq 2d_{\max} C_{r,A}$. 
We define a map 
$\Phi\colon\widetilde{\scrP}^{(m,s)}_x \to\scrP_s(x,V) \times \{1, \ldots, 2d_{\max} C_{r,A} \}^m$ 
as follows. 
For each $\Lambda \subset V$ with $1 \le |\Lambda| \le 2 d_{\max} C_{r,A}$, 
fix an injection $f_\Lambda\colon \Lambda \to \{1, \ldots, 2 d_{\max} C_{r,A} \}$. 
Given a path $\pi \in \scrP^{(m,s)} \cap \scrP(x,V)$, decompose $\pi$ as in \eqref{e:concat1}, 
and denote by $\widetilde{\pi} \in \scrP_s(x, V)$ the shifted concatenation (cf.\ Definition~\ref{def:concat}) 
of $\check\pi^{\ssup 1}, \ldots, \check\pi^{\ssup m}$, $\bar{\pi}$. 
Note that, for $2\le k\le m$, the point $\check\pi^{\ssup k}_0$ lies in $\partial\CC_k$ for some $\CC_k\in \mathfrak{C}_{r,A}$, 
while $\bar{\pi}_0 \in \partial \overline{\CC} \cup \overline{\CC}$ for some $\overline{\CC} \in \mathfrak{C}_{r,A}$. 
Thus, we may set 
\begin{equation} 
\Phi(\pi):= 
\bigl(\widetilde \pi,f_{\partial \CC_2}(\check{\pi}^{\ssup 2}_0),\dots,
f_{\partial \CC_m}(\check{\pi}^{\ssup{m}}_0), f_{\partial \bar{\CC} \cup \bar{\CC}}(\bar{\pi}_0) \bigr).
\end{equation}
As is readily checked, $\Phi(\pi)$ depends only on the equivalence class of $\pi$ and, when restricted to equivalence classes, $\Phi$ is injective. Hence the claim follows.
\end{proof}

Now take $\NN \subset \scrP(x, V)$ as in the statement, and set
\begin{equation}
\label{e:propmassclass1}
\widetilde{\mathcal{N}}^{(m,s)} := \big\{\text{equivalence classes of paths in } 
\NN \cap \scrP^{(m,s)}\big\} \subset \widetilde{\scrP}^{(m,s)}_x.
\end{equation}
For each $\MM \in \widetilde{\NN}^{(m,s)}$, choose a representative $\pi_\MM \in \MM$, and use Lemma~\eqref{l:propmassclass3} to write 
\begin{align}
\label{e:propmassclass6}
& \E_x \left[ \texte^{\int_0^t \xi(X_u) \textd u} \1_{\{\pi(X_{[0,t]}) \in \mathcal{N}\}} \right] 
= \sum_{m, s \in \N_0}  \sum_{\MM \in \widetilde{\mathcal{N}}^{(m,s)}}
\E_x \left[ \texte^{\int_0^t \xi(X_u) \textd u} \1_{\{\pi(X_{[0,t]}) \sim \pi_\MM \}} \right] \nonumber\\
& \quad\qquad \le \sum_{m, s \in \N_0} (2 d_{\max} C_{r,A})^m d_{\max}^s 
\sup_{\pi \in \NN^{(m,s)}} \E_x \left[ \texte^{\int_0^t \xi(X_u) \textd u} \1_{\{\pi(X_{[0,t]}) \sim \pi\}} \right],
\end{align}
where we use the convention $\sup \emptyset = 0$. For fixed $\pi \in \mathcal{N}^{(m,s)}$, by \eqref{e:cond_massclass1}, we may apply \eqref{e:fixed_class} and Lemma~\ref{l:size_comps} to obtain, for all $r$ large enough and with $c_0$ as in \eqref{e:defc0A0},
\begin{align}
\label{e:propmassclass7}
(2 d_{\max})^m d_{\max}^s \E_x \left[ \texte^{\int_0^t \xi(X_u) \textd u} \1_{\{\pi(X_{[0,t]}) \sim \pi\}} \right] 
\le \texte^{t \gamma_\pi } \texte^{c_0 m S_r} 
 q_A^s \texte^{\left(c-\log^{\ssup 3}L_r \right) k^{r,\varepsilon}_\pi}.
\end{align}

We next claim that, for $r$ large enough and $\pi \in \NN^{(m,s)}$,
\begin{equation}
\label{e:propmassclass7.1}
s \ge \left[(m-1)\vee 1 \right] S_r .
\end{equation}
Indeed, when $m\ge 2$, $|\supp(\check{\pi}^{\ssup i})| \ge S_r$ for all $2 \le i \le m$. When $m=0$, $|\supp(\pi)| \ge \max_{1 \le \ell \le |\pi|} |\pi_\ell -x| \ge (\log L_r)^\kappa \gg S_r$ by assumption. When $m=1$, the latter assumption and Lemma~\ref{l:size_comps} together imply that $\supp(\pi) \cap D^\cc_{r,A} \neq \emptyset$, and so either $|\supp(\check{\pi}^{\ssup 1})| \geq S_r$ or $|\supp(\check{\pi}^{\ssup 1})|\ge S_r$. Thus, \eqref{e:propmassclass7.1} holds by \eqref{e:defnpikpi} and \eqref{e:def_Sr}.

Note that $q_A < \ee^{-3c_0}$, so
\begin{equation}
\label{e:propmassclass7.5}
\sum_{m \geq 0} \sum_{s \geq [(m-1)\vee 1] S_r} \ee^{c_0 m S_r} q_A^s
= \frac{q_A^{S_r} + \ee^{c_0 S_r}q_A^{S_r} + \sum_{m \geq 2} \ee^{c_0 S_r m } q_A^{(m-1)S_r}}{1-q_A}
\leq \frac{4 \ee^{-c_0 S_r}}{1-q_A} < 1
\end{equation}
for $r$ large enough. Inserting this back into~\eqref{e:propmassclass6}, we obtain
\begin{equation}
\label{e:intermediatemassclass}
\log \E_x \left[ \texte^{\int_0^t \xi(X_s) \textd s} \1_{\{\pi(X_{0,t}) \in \mathcal{N}\}} \right]
\le \sup_{\pi \in \mathcal{N}} \Big\{ t \gamma_\pi + \left(c- \log^{\ssup 3}L_r \right) 
k^{r,\varepsilon}_\pi \Big\}.
\end{equation}

Thus the proof will be finished once we show that, for some $\varepsilon' > 0$, whp (respectively, almost surely eventually) as $n \to \infty$, all $\pi \in \NN$ satisfy
\begin{equation}
\label{e:propmassclass9}
k^{r,\varepsilon}_\pi \ge \dist_{G}(x,z_{\pi})(1-2(\log L_r)^{-\varepsilon'}).
\end{equation}
To that end, we define for each $\pi \in \NN$ an auxiliary path $\pi_\star$ as follows. 
First note that by using our assumptions we can find points $z', z'' \in \supp(\pi)$ (not necessarily distinct) such that 
\begin{equation}
\label{e:propmassclass10}
\dist_{G}(x,z') \geq (\log L_r)^\kappa, \qquad \dist_{G}(z'', z_\pi) \leq 2 M_A S_r,
\end{equation}
where the latter holds by Lemma~\ref{l:size_comps}. 
Write  $\{z_1, z_2 \} = \{z', z''\}$ with $z_1$, $z_2$ ordered according to their hitting times by $\pi$, 
i.e., $\inf\{ \ell \colon \pi_\ell = z_1 \} \leq \inf\{\ell \colon \pi_\ell = z_2\}$. 
Define $\pi_e$ as the concatenation of the loop erasure of $\pi$ between $x$ and $z_1$ and the loop erasure of $\pi$ between $z_1$ and $z_2$. 
Since $\pi_e$ is the concatenation of two self-avoiding paths, it visits each point at most twice. 
Finally, define $\pi_\star \sim \pi_e$ by substituting the excursions of $\pi_e$ from $\Pi_{r,A}$ to $D_{r,A}^\cc$ 
by direct paths between the corresponding endpoints, i.e., 
substitute each $\hat{\pi}_e^{\ssup i}$ with $|\hat{\pi}_e^{\ssup i}|=\ell_i$, $(\hat{\pi}_e^{\ssup i})_0 = x_i \in \Pi_{r,A}$ 
and $(\hat{\pi}_e^{\ssup i})_{\ell_i} = y_i \in D_{r,A}^\cc$ 
by a shortest-distance path $\widetilde{\pi}_\star^{\ssup i}$ 
with the same endpoints and $|\widetilde{\pi}_\star^{\ssup i}| = \dist_{G}(x_i, y_i)$.
Since $\pi_\star$ visits each $x \in \Pi_{r,A}$ at most $2$ times,
\begin{equation}
\label{e:propmassclass11}
\begin{aligned}
k^{r,\varepsilon}_\pi \ge k^{r,\varepsilon}_{\pi_\star} \geq M^{r,\varepsilon}_{\pi_\star} - 2 |\supp(\pi_\star)\cap \Pi_{r,A}|(S_r+1) \geq M^{r,\varepsilon}_{\pi_\star} - 4 |\supp(\pi_\star)\cap \Pi_{r,A}|  S_r.
\end{aligned}
\end{equation}
Note that $M_{\pi_\star}^{r, \varepsilon} \geq \left|\{x \in \supp(\pi_\star) \colon\, \xi(x) \leq (1-\varepsilon) a_{L_r}\} \right| - 1$
and, by \eqref{e:propmassclass10}, $|\supp(\pi_\star)| \geq \dist_{G}(x,z') \geq (\log L_r)^\kappa \gg (\log L_r)^{\alpha+2\varepsilon'}$ for some $0<\varepsilon'<\varepsilon$. 
Applying Lemmas~\ref{l:bound_mediumpoints}--\ref{l:boundhighexceedances} and using \eqref{e:def_Sr} and $L_r > r$, we obtain, for $r$ large enough,
\begin{equation}
\label{e:propmassclass12}
\begin{aligned}
k^{r,\varepsilon}_\pi 
& \geq |\supp(\pi_\star)|\left( 1 - \frac{2}{(\log L_r)^{\varepsilon}} 
-  \frac{4 S_r}{(\log L_r)^{\alpha+2\varepsilon'}}\right) 
\geq |\supp(\pi_\star)|\left( 1 - \frac{1}{(\log L_r)^{\varepsilon'}}\right).
\end{aligned}
\end{equation}
On the other hand, since $|\supp(\pi_\star)| \geq (\log L_r)^\kappa$ and by \eqref{e:propmassclass10} again,
\begin{equation}
\label{e:propmassclass13}
\begin{aligned}
\left|\supp(\pi_\star) \right| &= \big(\left|\supp(\pi_\star) \right| + 2 M_A S_r\big) - 2 M_A S_r\\
& \geq \left( \dist_{G}(x,z'') + 2 M_A S_r \right) \left( 1-\frac{2 M_A S_r}{(\log L_r)^\kappa} \right) \\
& \geq \dist_{G}(x,z_\pi)\left( 1-\frac{1}{(\log L_r)^{\varepsilon'}} \right).
\end{aligned}
\end{equation}
Now \eqref{e:propmassclass9} follows from \eqref{e:propmassclass12}--\eqref{e:propmassclass13}.
\end{proof}


\section{Proof of Theorem~\ref{t:QLyapGWT}} 
\label{s:proofoutline}

This section is devoted to the proof of Theorem \ref{t:QLyapGWT}. We note that, after replacing $d_{\max}$ by $d_{\max} \vee D_0$ if necessary, we may assume without loss of generality that
\begin{equation}
\label{e:GWinfrakG}
\GW \in \mathfrak{G}^{(\vartheta)}_\infty.
\end{equation}
%


\subsection{Lower bound}

In this section we give the proof of the lower bound for the large-$t$ asymptotics of the total mass. This proof already explains the random mechanism that produces the main contribution to the total mass. This mechanism comes from an \emph{optimization} of the behavior of the random path in the Feynman-Kac formula, which in turn comes from the existence of a \emph{favorite region} in the random graph, both in terms of the local graph structure and the high values of the potential in this local graph structure. The optimality is expressed in terms of a distance to the starting point $\cO$ that can be reached in a time $o(t)$ with a sufficiently high probability, such that time $t-o(t)$ is left for staying inside the favorite region, thus yielding a maximal contribution to the Feynman-Kac formula. The latter is measured in terms of the \emph{local eigenvalue} of the Anderson operator $\Delta+\xi$, which in turn comes from high values and optimal shape of the potential $\xi$ in the local region.

We write the total mass of the solution of \eqref{e:PAM} in terms of the Feynman-Kac formula as
\begin{equation}
\label{FKform}
U(t)=\E_\cO\Big[\exp\Big\{\int_0^t \xi(X_s)\,\dd s\Big\}\Big],
\end{equation}
where $(X_s)_{s \geq 0}$ is the continuous-time random walk on $\GW$, i.e., the Markov chain with generator $\Delta_{\GW}=\Delta$, the Laplacian on $\GW$, starting from the origin $\cO$. As usual in the literature of the PAM, this formula is the main point of departure for our proof.

Fix $\varepsilon>0$. By the definition of $\widetilde{\chi}$, there exists an infinite rooted tree $T=(V',E',\YY)$ with degrees in $\supp(D_g)$ such that $\chi_T(\varrho) < \widetilde{\chi}(\varrho) + \tfrac14 \varepsilon$. Let $Q_r = B^T_r(\YY)$ be the ball of radius $r$ around $\YY$ in $T$. By Proposition~\ref{p:dualrepchi} and \eqref{e:defhatchi}, there exist a radius $R \in \N$ and a potential profile $q\colon B^T_R \to\R$ with $\cL_{Q_R}(q;\varrho)<1$ (in particular, $q\leq 0$) such that
\begin{equation}
\label{e:prLB0}
\lambda_{Q_R}(q;T) \geq -\widehat{\chi}_{Q_R}(\varrho;T) - \tfrac12 \varepsilon 
> -\widetilde{\chi}(\varrho) - \varepsilon.
\end{equation}

For $\ell\in\N$, let $B_\ell = B_\ell(\cO)$ denote the ball of radius $\ell$ around $\cO$ in $\GW$. We will show next that, 
almost surely eventually as $\ell \to \infty$, $B_\ell$ contains a copy of the ball $Q_R$ where $\xi$ is lower bounded by $\varrho\log\log |B_\ell| + q$.

\begin{proposition}{\bf [Balls with high exceedances]}
\label{p:existencesubtree}
$\Probgr \times \Prob$-almost surely eventually as $\ell \to \infty$, there exists a vertex $z \in B_\ell$ with $B_{R+1}(z) \subset B_\ell$ and an isomorphism $\varphi:B_{R+1}(z) \to Q_{R+1}$ such that $\xi \geq \varrho \log \log |B_\ell| + q \circ \varphi$ in $B_R(z)$. In particular,
\begin{equation}
\lambda_{B_R(z)}(\xi; \GW) > \varrho \log \log |B_\ell| - \widetilde{\chi}(\varrho) - \varepsilon.
\end{equation}
Any such $z$ necessarily satisfies $|z| \geq c \ell$ $\Probgr \times \Prob$-almost surely eventually as $\ell \to \infty$ 
for some constant $c = c(\varrho, \vartheta, \widetilde{\chi}(\varrho), \varepsilon) >0$.
\end{proposition}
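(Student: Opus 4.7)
The plan splits the proposition into two claims: existence of $z$, and the lower bound $|z| \geq c\ell$. For existence I produce exponentially many (in $\ell$) approximately independent trials inside $B_\ell$, each with positive probability of carrying both a structural copy of $Q_{R+1}$ and a sufficiently high potential; since $\LL_{Q_R}(q;\varrho) < 1$, the expected number of fully successful trials will tend to infinity fast enough that Borel--Cantelli produces one $\Prob \times \Probgr$-almost surely eventually. The depth bound then follows by combining the forced lower bound on $\xi$ near $z$ with the upper bound on $\max_{B_{c\ell + R}}\xi$ provided by Lemma~\ref{l:maxpotential}.

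\textbf{Independent structural trials.} Set $d := \ell - R - 1$ and, for each vertex $a$ of $\GW$ at depth $\ell - 2R - 3$, fix measurably a descendant $z(a)$ of $a$ at depth $d$ passing through a single distinguished child $a'$ of $a$; such a descendant exists $\Probgr$-a.s.\ since $d_{\min}\geq 2$ forces every vertex to have an infinite line of descent. By construction $B_{R+1}(z(a))$ is contained in the subtree of $\GW$ rooted at $a'$, so the balls $\{B_{R+1}(z(a))\}_a$ are pairwise disjoint in $\GW$ and their structures depend on disjoint sets of i.i.d.\ $D_g$-distributed degree variables. Consequently the events $\AA_a := \{B_{R+1}(z(a))\simeq Q_{R+1}$ as rooted graphs, with $z(a)\mapsto\YY\}$ are conditionally i.i.d.\ given the $(\ell - 2R - 3)$-th generation of $\GW$, each with a common probability $p_0 > 0$; positivity holds because every degree of $Q_{R+1}$ lies in $\supp(D_g)$.

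\textbf{Potential exceedance and existence.} Conditionally on $\GW$ and on $\AA_a$, by \eqref{e:DE} and independence of the potential,
\begin{equation}
\Prob\!\left( \xi \geq \varrho\log\log|B_\ell| + q\circ\varphi_a \text{ on } B_R(z(a)) \,\middle|\, \GW, \AA_a \right) = |B_\ell|^{-\LL_{Q_R}(q;\varrho)(1+o(1))},
\end{equation}
where $\varphi_a$ realises $\AA_a$. Since the balls $B_R(z(a))$ are pairwise disjoint, these events are also conditionally independent across $a$. By Kesten--Stigum applied to $\GW$ (supercritical with bounded offspring), the $(\ell - 2R - 3)$-th generation has size $\geq c_1 \e^{\vartheta \ell}$ $\Probgr$-a.s.\ eventually, and the law of large numbers on the i.i.d.\ subtrees yields $|\{a : \AA_a\}| \geq \tfrac12 p_0 c_1 \e^{\vartheta \ell}$ $\Probgr$-a.s.\ eventually. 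Hence the $\Prob \times \Probgr$-probability that no good $z$ exists is dominated by $\exp(-c_2 \e^{\vartheta \ell (1 - \LL_{Q_R}(q;\varrho))})$ plus the (exponentially small) deviation terms controlling the two preceding bounds, which is summable in $\ell$; Borel--Cantelli yields the existence claim.

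\textbf{Depth lower bound and main obstacle.} For any good $z$, pick $x \in B_R(z)$ with $q(\varphi(x)) = \max_{Q_R} q$, a fixed finite constant $\leq 0$; then $\xi(x) \geq \varrho\log\log|B_\ell| + \max q$. If $|z|\leq c\ell$ then $x \in B_{c\ell + R}$, so by Lemma~\ref{l:maxpotential} and \eqref{e:volumerateGW},
\begin{equation}
\varrho\log\log|B_\ell| + \max q \leq \max_{B_{c\ell + R}}\xi \leq \varrho\log(c\vartheta\ell) + o(1),
\end{equation}
forcing $\log c \geq \max q/\varrho - o(1)$, which fails for any $c < \e^{\max q/\varrho}$. \emph{The main obstacle} in this plan is engineering genuine independence: the ball $B_{R+1}(z(a))$ extends back through the ancestors of $z(a)$ in $\GW$, so one must index the trials by a generation deep enough and route $z(a)$ through one fixed child $a'$ of each $a$, ensuring that the balls lie in mutually disjoint subtrees of $\GW$ -- this is exactly why we place $a$ at depth $\ell - 2R - 3$ and $z(a)$ at depth $\ell - R - 1$.
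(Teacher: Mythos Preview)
Your proposal is correct and follows essentially the same strategy as the paper: find many disjoint copies of $Q_{R+1}$ inside $B_\ell$, use independence of the potential on disjoint balls together with $\cL_{Q_R}(q;\varrho)<1$ and Borel--Cantelli to get one copy where $\xi$ exceeds $\varrho\log\log|B_\ell|+q$, and then derive the depth bound from Lemma~\ref{l:maxpotential}. The paper simply asserts the existence of $N\geq p|B_\ell|$ disjoint isomorphic balls as ``straightforward,'' whereas you carry out this step explicitly by indexing trials via a deep generation and routing through a fixed child to force the balls into disjoint subtrees; this is a legitimate and careful way to secure the independence the paper takes for granted. For the depth bound, the paper argues through the eigenvalue (if $|z|<c\ell$ then $\lambda_{B_R(z)}\leq\max_{B_{c\ell}}\xi$, which is too small), while you argue directly via the maximum of $\xi$; both routes work and yield valid constants $c$, yours depending on $q$ (hence implicitly on $\widetilde{\chi}(\varrho),\varepsilon$) rather than on $\widetilde{\chi}(\varrho)$ explicitly. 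The only omission is that you do not spell out the ``In particular'' eigenvalue inequality, but this is immediate from the isomorphism, the potential lower bound, and the choice of $q$ in \eqref{e:prLB0}.
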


\begin{proof}
First note that, as a consequence of the definition of $\GW$, it may be shown straightforwardly that, for some $p=p(T, R) \in (0,1)$ and $\Probgr$-almost surely eventually as $\ell \to \infty$, there exist $N \in \N$, $N \geq p |B_\ell|$ and distinct $z_1, \ldots, z_N \in B_\ell$ such that $B_{R+1}(z_i) \cap B_{R+1}(z_j) = \emptyset$ for $1\leq i\neq j\leq N$ and, for each $1 \leq i \leq N$, $B_{R+1}(z_i) \subset B_\ell$  and $B_{R+1}(z_i)$ is isomorphic to $Q_{R+1}$. Now, by \eqref{e:DE}, for each $i \in \{1,\ldots, N\}$,
\begin{equation}
\Prob\big(\xi\geq \varrho\log\log|B_\ell|+q \mbox{ in }B_R(z_i)\big)=|B_\ell|^{-\cL_{Q_R}(q)}.
\end{equation}
Using additionally that $|B_\ell| \geq \ell$ and $1-x \leq \ee^{-x}$, $x\in\R$, we obtain
\begin{equation*}
\begin{aligned}
\Prob(\not \exists i \in \{1,\ldots,N\} \colon \xi\geq \varrho\log\log|B_\ell|+q \mbox{ in }B_R(z_i))
= \left( 1 - |B_\ell|^{-\cL_{Q_R}(q)} \right)^N 
\leq \ee^{-p \ell^{1-\cL_{Q_R}(q)}}
\end{aligned}
\end{equation*}
which is summable in $\ell \in \N$, so the proof of the first statement is completed using the Borel-Cantelli lemma. As for the last statement, note that, by \eqref{e:monot_princev}, Lemma~\ref{l:maxpotential} and $L_r \sim \vartheta r$,
\begin{equation}
\lambda_{B_{c\ell}}(\xi;\GW) \leq \max_{x \in B_{c \ell}} \xi(x) < a_{L_{c \ell}} + o(1) 
< a_{L_\ell} + \varrho \log c \vartheta + o(1) < a_{L_\ell} - \widetilde{\chi}(\varrho)-\varepsilon
\end{equation}
provided $c>0$ is small enough.
\end{proof}

\begin{proof}[Proof of the lower bound in \eqref{e:QLyapGWT}]
Let $z$ be as in Proposition~\ref{p:existencesubtree}. 
Write $\tau_z$ for the hitting time of $z$ by the random walk $X$.
For any $s\in (0,t)$, we obtain a lower bound for $U(t)$ as follows:
\begin{equation}
\label{lowbound1}
\begin{aligned}
U(t)&\geq \E_\cO\Big[\exp\Big\{\int_0^t \xi(X_u)\,\dd u\Big\}\,\1_{{\{\tau_z\leq s\}}}\,
\1_{{\{X_u\in B_{R}(z)\,\forall u\in[\tau_z,t]\}}}\Big]\\
&=\E_\cO\Big[\ee^{\int_0^{\tau_z} \xi(X_u)\,\dd u}\,\1_{{\{\tau_z\leq s\}}}\,
\E_z\Big[\ee^{\int_0^{v} \xi(X_u)\,\dd u}\,\1_{{\{X_u\in T\,\forall u\in [0,v]\}}}\Big]\Big|_{v=t-\tau_z}\Big],
\end{aligned}
\end{equation}
where we use the strong Markov property at time $\tau_z$. We first bound the last term in the integrand in \eqref{lowbound1}. Since $\xi \geq \varrho \log\log |B_\ell| +q $ in $B_R(z)$, 
\begin{equation}
\begin{aligned}
\E_z\Big[\ee^{\int_0^{v} \xi(X_u)\,\dd u} \1_{\{X_u\in B_R(z)\,\forall u\in [0,v]\}}\Big]
& \geq \ee^{v \varrho \log \log |B_\ell|} \E_{\YY}\Big[\ee^{\int_0^{v} q(X_u)\,\dd u} 
\1_{\{X_u\in Q_R\,\forall u\in [0,v]\}}\Big] \\
& \geq \e^{ v \varrho \log \log |B_\ell|} \ee^{v \lambda_{Q_R}(q;T)} \phi^{\ssup 1}_{Q_R}(\YY)^2 \\
& > \exp \big\{ v \left(\varrho \log\log |B_{\ell}| -  \widetilde{\chi}(\varrho) - \varepsilon \right) \big\},
\end{aligned}
\end{equation}
for large $v$, where we used that $B_{R+1}(z)$ is isomorphic to $Q_{R+1}$ and applied Lemma~\ref{l:bounds_mass} and \eqref{e:prLB0}.
On the other hand, since $\xi\geq0$,  
\begin{equation}
\E_\cO \Big[\exp\Big\{\int_0^{\tau_z} \xi(X_u)\,\dd u\Big\}\1{\{\tau_z\leq s\}}\Big]
\geq \P_\cO(\tau_z\leq s),
\end{equation}
and we can bound the latter probability from below by the probability that the random walk runs along a shortest path from the root $\cO$ to $z$ within a time at most $s$. Such a path $(y_i)_{i=0}^{|z|}$ has $y_0 = \cO$, $y_{|z|} = z$, $y_i \sim y_{i-1}$ for $i=1, \ldots, |z|$, has at each step from $y_i$ precisely $\deg(y_i)$ choices for the next step with equal probability, and the step is carried out after an exponential time $E_i$ with parameter $\deg(y_i)$. This gives
\begin{equation}
\begin{aligned}
\Prob_\cO(\tau_z\leq s) 
& \geq \Big(\prod_{i=1}^{|z|}\frac 1{\deg(y_i)}\Big) P\Big(\sum_{i=1}^{|z|} E_i \leq s\Big) 
\geq  d_{\max}^{-|z|}
{\rm Poi}_{d_{\rm min} s}([|z|,\infty)),
\end{aligned}
\end{equation}
where ${\rm Poi}_\gamma$ is the Poisson distribution with parameter $\gamma$, and $P$ is the generic symbol for probability. Summarising, we obtain
\begin{equation}
\label{lowbound2}
\begin{aligned}
U(t) 
& \geq d_{\max}^{-|z|} \e^{-d_{\rm min} s}\frac{(d_{\rm min} s)^{|z|}}{|z|!}
\e^{(t-s)\left[\varrho\log\log |B_{\ell}| - \widetilde{\chi}(\varrho) - \varepsilon \right]} \\
& \geq \exp \left\{-d_{\min} s + (t-s)\left[\varrho\log\log |B_{\ell}| - \widetilde{\chi}(\varrho) 
- \varepsilon \right] - |z| \log \left( \frac{d_{\max}}{d_{\min}}\frac{|z|}{s}\right) \right\} \\
& \geq \exp \left\{-d_{\min} s + (t-s)\left[\varrho\log\log |B_{\ell}| - \widetilde{\chi}(\varrho) 
- \varepsilon \right] - \ell \log \left( \frac{d_{\max}}{d_{\min}}\frac{\ell}{s}\right) \right\},
\end{aligned}
\end{equation}
where for the last inequality we assume $s \leq |z|$ and use $\ell \geq |z|$. Further assuming that $\ell = o(t)$, we see that the optimum over $s$ is obtained at 
\begin{equation}
s= \frac{\ell}{d_{\min}+\varrho\log\log|B_{\ell}|-\widetilde{\chi}(\varrho) - \varepsilon} =o(t).
\end{equation} 
Note that, by Proposition~\ref{p:existencesubtree}, this $s$ indeed satisfies $s\leq |z|$. Applying \eqref{e:volumerateGW} we get, after a straightforward computation, almost surely eventually as $t \to \infty$,
\begin{equation}
\label{Ulowboundwithr}
\frac 1t\log U(t)\geq \varrho\log \log |B_\ell| -\frac{\ell}t \log\log \ell - \widetilde{\chi}(\varrho) 
- \varepsilon + O\left( \frac{\ell}{t} \right).
\end{equation}
Analysing the main terms above and using $\log |B_\ell| \sim \vartheta \ell$, we find that the optimal $\ell$ satisfies $\ell \log\log \ell -\frac{\ell}{\log \ell} \sim t\varrho$, i.e., $\ell\sim\varrho t/\log\log t = \mathfrak{r}_t$. For this choice we obtain
\begin{equation}
\label{UlowboundGWT1}
\frac 1t\log U(t)\geq \varrho\log \log |B_{\mathfrak{r}_t}| - \mathfrak{r}_t \log \log \mathfrak{r}_t
-\widetilde{\chi}(\varrho) - \varepsilon +O\left( \frac{1}{\log \log t}\right).
\end{equation}
Substituting $\log |B_r| \sim \vartheta r$ and the definition of $\mathfrak{r}_t$, we obtain, $\Probgr \times \Prob$-almost surely,
\begin{equation}
\label{UlowboundGWT2}
\liminf_{t \to \infty}
\left\{ \frac 1t\log U(t)- \varrho\log \left(\frac{\varrho \vartheta t}{\log \log t}\right) \right\}
\geq -\varrho -\widetilde{\chi}(\varrho) - \varepsilon.
\end{equation}
Since $\varepsilon>0$ is arbitrary, the proof of the lower bound in \eqref{e:QLyapGWT} is complete.
\end{proof}


\subsection{Upper bound}

In this section we prove the upper bound in \eqref{e:QLyapGWT}. A first step is to reduce the problem to a ball of radius $t \log t$. Here we include more general graphs.

\begin{lemma}{\bf [Spatial truncation]}
\label{l:firsttruncation}
For any $c>0$ and any $\ell_t \in \N$, $\ell_t \geq c t \log t$, 
\begin{equation}
\label{e:FT1}
\sup_{G \in \mathfrak{G}_{\ell_t}} \E_{\cO} \left[\ee^{\int_0^t \xi(X_s) \dd s} 
\1_{\{\tau_{B^\cc_{\ell_t}}< t\}} \right] \leq \ee^{-\ell_t} \quad \text{ whp as } t \to \infty.
\end{equation}
Moreover, for any $G \in \mathfrak{G}^{(\vartheta)}_\infty$,
\begin{equation}
\label{e:FT2}
\E_{\cO} \left[\ee^{\int_0^t \xi(X_s) \dd s} \1_{\{\tau_{B^\cc_{\ell_t}}< t\}} \right] \leq  
\ee^{-\ell_t} \quad \text{ $\Prob$-a.s. eventually as } t \to \infty.
\end{equation}
\end{lemma}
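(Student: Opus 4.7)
\bpr
The plan is to combine a dyadic shell decomposition of the event $\{\tau_{B^\cc_{\ell_t}} < t\}$ with the entropy-reduction Proposition~\ref{p:massclass}. Heuristically, in order to exit the ball of radius $\ell_t \ge ct\log t$ within time $t$, the walk must travel an atypically large distance, and the resulting entropic cost overrides any gain from the potential.

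\emph{Step 1 (Dyadic decomposition).} Set $R_j := 2^j \ell_t$ for $j \geq 0$ and split
\begin{equation*}
\E_\cO\!\left[\ee^{\int_0^t \xi(X_s)\dd s}\,\1_{\{\tau_{B^\cc_{\ell_t}} < t\}}\right] = \sum_{j \geq 0} E_j,
\qquad
E_j := \E_\cO\!\left[\ee^{\int_0^t \xi(X_s)\dd s}\,\1_{\{\tau_{B^\cc_{R_j}} < t \leq \tau_{B^\cc_{R_{j+1}}}\}}\right].
\end{equation*}
On the event defining $E_j$, the walk stays inside $B_{R_{j+1}}$ and reaches distance at least $R_j$ from $\cO$.

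\emph{Step 2 (Apply Proposition~\ref{p:massclass}).} For each $j$, I would invoke the proposition at scale $r := R_{j+1}$ and for the class $\NN_j$ of paths $\pi \in \scrP(\cO, B_{R_{j+1}})$ with $\supp(\pi) \subset B_{R_{j+1}}$ and $\max_\ell \dist_G(\pi_\ell, \cO) \geq R_j$. For each $\pi \in \NN_j$, take $z_\pi$ to be the first vertex of $\pi$ at distance $R_j$ from $\cO$ (so that $z_\pi \in \supp(\pi)$ and $\dist_G(\cO, z_\pi) = R_j$) and set $\gamma_\pi := a_{L_{R_{j+1}}} + 1$. Lemma~\ref{l:maxpotential} then guarantees that $\gamma_\pi$ dominates $\lambda^{\ssup 1}_\CC(\xi;G) + \ee^{-S_{R_{j+1}}}$ for every $\CC \in \mathfrak{C}_{R_{j+1},A}$, while the distance hypothesis $R_j \gg (\log L_{R_{j+1}})^\kappa$ follows from the uniform bound $L_R \leq d_{\max}^{R+1}$. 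The proposition then delivers
\begin{equation*}
\log E_j \leq t\,\gamma_\pi - \bigl(\log^{\ssup 3} L_{R_{j+1}} - c_A\bigr)\, R_j.
\end{equation*}

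\emph{Step 3 (Summation).} Combining $R+1 \leq L_R \leq d_{\max}^{R+1}$ with $\ell_t \geq c t \log t$, the negative term $R_j\,\log^{\ssup 3} L_{R_{j+1}}$ is at least of order $2^j \cdot t \log t \cdot \log\log\log t$, which dominates the positive contribution $t \gamma_\pi = O\bigl(t (\log t + j)\bigr)$ for every $j \geq 0$ and all sufficiently large $t$. A direct bookkeeping yields $\log E_j \leq -\ell_t - (j+1)\log 2$, whence $\sum_j E_j \leq \ee^{-\ell_t}$.

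The main obstacle will be to apply Proposition~\ref{p:massclass} simultaneously at every dyadic scale $R_{j+1}$, $j \geq 0$: this requires a union bound along the scales, controlled by the fact that the failure probabilities at scale $r$ in the underlying Lemmas~\ref{l:maxpotential}--\ref{l:boundhighexceedances} are polynomially small in $r$ while $R_j \to \infty$ geometrically. The edge case where $|V| < R_{j+1}$ (possible for $G \in \mathfrak{G}_{\ell_t}$) will be handled by replacing $R_{j+1}$ with $|V|-1$; the bounds above remain valid since $L_r \leq d_{\max}^{r+1}$ holds uniformly over $G \in \mathfrak{G}_r$.
\epr
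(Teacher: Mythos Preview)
Your approach is workable in principle but far heavier than needed, and it differs substantially from the paper's argument. The paper proves Lemma~\ref{l:firsttruncation} by an elementary route that avoids Proposition~\ref{p:massclass} entirely: on the good event $\bigcap_{r \geq \ell_t}\{\max_{B_r}\xi < a_{L_r}+2\varrho\}$ (controlled by Lemma~\ref{l:maxpotential} and a union bound), one decomposes according to the exact maximal displacement $\sup_{s\in[0,t]}|X_s|=r$, bounds the integrand crudely by $\exp\{t\max_{B_r}\xi\}\leq \exp\{Ct+\varrho t\log r\}$ (using $|B_r|\leq d_{\max}^r$), and then uses that reaching distance $r$ forces the jump count $J_t\geq r$. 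Since $J_t$ is stochastically dominated by a Poisson$(t d_{\max})$ variable, one has $\P(J_t\geq r)\leq \exp\{-r\log(r/\ee t d_{\max})\}$; for $r\geq \ell_t\geq ct\log t$ this cost beats $\varrho t\log r$ by more than $2r$, and the sum over $r$ gives $\ee^{-\ell_t}$.

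By contrast, you invoke the full path-expansion machinery of Proposition~\ref{p:massclass}, which is precisely the tool the paper develops for the sharp upper bound in Theorem~\ref{t:QLyapGWT}; using it here for a crude truncation is overkill and inverts the natural logical flow. Your route also carries genuine extra work: Proposition~\ref{p:massclass} is stated only as ``with probability tending to one as $r\to\infty$'', so to apply it simultaneously at all dyadic scales $R_{j+1}$ you must extract quantitative (summable) failure probabilities from its proof rather than its statement, and the finite-graph edge case requires care since $G\in\mathfrak{G}_{\ell_t}$ need not lie in $\mathfrak{G}_{R_{j+1}}$ for $j\geq 1$. Finally, your Step~3 bookkeeping is written for $\ell_t\asymp t\log t$ only; the lemma allows arbitrary $\ell_t\geq ct\log t$, so the comparison $R_j\log^{\ssup 3}L_{R_{j+1}}$ versus $t\gamma_\pi$ must be argued uniformly in $\ell_t$. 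None of these obstacles is fatal, but the paper's Poisson-tail argument sidesteps all of them in a few lines.
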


\begin{proof}
For $r \geq \ell_t$ and $G \in \mathfrak{G}_{\ell_t}$, let
\begin{equation}
\BB_r := \left\{ \max_{x \in B_r} \xi(x) \geq a_{L_r} + 2 \varrho\right\}.
\end{equation}
By Lemma~\ref{l:maxpotential} and a union bound, we see that
\begin{equation}
\label{e:prFT1}
\sup_{G \in \mathfrak{G}_{\ell_t}} \Prob \left(\bigcup_{r \geq \ell_t} \BB_r \right) 
\leq \sum_{r \geq \ell_t} \sup_{G \in \mathfrak{G}_{\ell_t}} \Prob (\BB_r) 
\underset{t \to \infty}{\longrightarrow} 0,
\end{equation}
while, for $G \in \mathfrak{G}^{(\vartheta)}_\infty$, by the Borel-Cantelli lemma,
\begin{equation}
\label{e:prFT2}
\bigcup_{r \geq \ell_t} \BB_r
\text{ does not occur $\Prob$-a.s. eventually as } t\to \infty.
\end{equation}
We may therefore work on the event $\bigcap_{r \geq \ell_t} \BB_r^\cc$. On this event, we may write
\begin{align}
\label{e:prFT3}
\E_{\cO} \left[\ee^{\int_0^t \xi(X_s) \dd s} \1_{\{\tau_{B^\cc_{\ell_t}}< t\}} \right]
& = \sum_{r \geq \ell_t} \E_{\cO} \left[\ee^{\int_0^t \xi(X_s) \dd s} 
\1_{\{\sup_{s \in [0,t]}|X_s| = r \}} \right] \nonumber\\
& \leq  \ee^{C t} \sum_{r \geq \ell_t}\, \ee^{\varrho t \log r } \, 
\P_{\cO} \left( J_t \geq r \right),
\end{align}
where $J_t$ is the number of jumps of $X$ up to time $t$, $C = \varrho (2 + \log \log d_{\max})$, and we use that $|B_r| \leq d_{\max}^r$. 
Note that $J_t$ is stochastically dominated by a Poisson random variable with parameter $t d_{\max}$. Hence
\begin{equation}
\P_{\cO} \left( J_t \geq r \right) \leq \frac{(t d_{\max})^r}{r!} \leq 
\exp \left\{-r \log\left( \frac{r}{\ee t d_{\max}}\right) \right\}
\end{equation}
for large $r$. Using $\ell_t \geq c t \log t$, we can check that, for $r \geq \ell_t$ and $t$ large enough,
\begin{equation}
r \log\left( \frac{r}{\ee t d_{\max}}\right) - \varrho t \log r > 2 r
\end{equation}
and thus \eqref{e:prFT3} is at most $\ee^{-\ell_t} \ee^{-\ell_t + Ct + 2} < \ee^{-\ell_t}$.
\end{proof}

In order to be able to apply Proposition~\ref{p:massclass} in the following, 
we need to make sure that all paths considered exit a ball with a slowly growing radius. 

\begin{lemma}{\bf [No short paths]}
\label{l:noshortpaths}
For any $\gamma \in (0,1)$,
\begin{equation}
\label{e:NSP1}
\sup_{G \in \mathfrak{G}_{\lceil t^\gamma \rceil}} \frac{\E_{\cO} \left[\ee^{\int_0^t \xi(X_s) \dd s} 
\1_{\{\tau_{B^\cc_{\lceil t^\gamma \rceil}} > t\}} \right]}{U(t)} = o(1) \quad \text{ whp as } t \to \infty.
\end{equation}
Moreover, for any $G \in \mathfrak{G}_\infty$,
\begin{equation}
\label{e:NSP2}
\lim_{t\ \to \infty} \frac{\E_{\cO} \left[\ee^{\int_0^t \xi(X_s) \dd s} 
\1_{\{\tau_{B^\cc_{\lceil t^\gamma \rceil}} > t\}} \right]}{U(t)} = 0 \quad \text{ $\Prob$-a.s. almost surely}.
\end{equation}
\end{lemma}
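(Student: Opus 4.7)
The plan is to bound the restricted Feynman-Kac mass by a crude spectral bound, the unrestricted mass $U(t)$ by the lower bound already established in \eqref{e:QLyapGWT}, and to observe that on graphs of exponential volume growth the former is at least $\ee^{-\Theta(t\log t)}$ times the latter.

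Set $r_t := \lceil t^\gamma\rceil$. Lemma~\ref{l:bounds_mass} combined with the monotonicity bound \eqref{e:monot_princev} gives
\begin{equation*}
\E_\cO\!\left[\ee^{\int_0^t\xi(X_s)\,\dd s}\,\1_{\{\tau_{B_{r_t}^\cc}>t\}}\right] \leq |B_{r_t}|^{1/2}\,\ee^{t\,\max_{x\in B_{r_t}}\xi(x)}.
\end{equation*}
Applying Lemma~\ref{l:maxpotential} with $g_r := 2\varrho\log r$ (so that the right-hand side of \eqref{e:maxpotential} equals $r^{-2}$, which is summable), Borel-Cantelli yields $\max_{B_{r_t}}\xi \leq a_{L_{r_t}} + O(1)$ whp uniformly over $G\in\mathfrak{G}_{r_t}$, and $\Prob$-a.s.\ eventually for a fixed $G\in\mathfrak{G}_\infty$. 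Since $|B_{r_t}|\leq d_{\max}^{r_t}$, the log-numerator is bounded by $t\,a_{L_{r_t}} + O(t) + O(t^\gamma)$.

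For the denominator I would invoke the already-established lower bound in \eqref{e:QLyapGWT}. Writing $\ell_t^\star := \lceil\varrho t/\log\log t\rceil$ and noting that on $\GW$ (where $\log L_r\sim\vartheta r$) one has $a_{L_{\ell_t^\star}} = \varrho\log(\varrho\vartheta t/\log\log t) + o(1)$, the lower bound rewrites as $\log U(t) \geq t\,a_{L_{\ell_t^\star}} - O(t)$, so
\begin{equation*}
\frac{\E_\cO[\cdots]}{U(t)} \leq \exp\!\Big\{t\bigl(a_{L_{r_t}}-a_{L_{\ell_t^\star}}\bigr) + O(t)\Big\}.
\end{equation*}
Using $\log L_r\sim\vartheta r$ once more, $a_{L_{r_t}}-a_{L_{\ell_t^\star}} = -\varrho(1-\gamma)\log t + O(\log\log\log t)$, and hence the ratio decays as $\ee^{-\varrho(1-\gamma)t\log t + o(t\log t)}\to 0$, which is exactly \eqref{e:NSP2}.

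The hardest point is the first claim \eqref{e:NSP1}, which asks for uniformity over $G\in\mathfrak{G}_{r_t}$. The naive spectral estimate $\lambda^{\ssup 1}_{B_{r_t}}\leq\max\xi$ is too crude on graphs of only polynomial volume growth, where the gap $a_{L_{r_t}}-a_{L_{\ell_t^\star}}$ is only $O(1)$; in that regime one would sharpen the eigenvalue bound via Lemma~\ref{l:eigislands} and Proposition~\ref{p:dualrepchi}. Moreover, \eqref{e:QLyapGWT} is specific to $\GW$, so for arbitrary $G\in\mathfrak{G}_{r_t}$ one would instead replay the sprint-and-stay construction of \eqref{lowbound2} with the locally maximal vertex in $B_{\ell_t^\star}$, which still yields $\log U(t) \geq t\,a_{L_{\ell_t^\star}} - O(t)$ provided $|V|\geq\ell_t^\star+1$, the only additional mild hypothesis needed.
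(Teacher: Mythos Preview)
Your core argument matches the paper's exactly: bound the numerator by $\exp\{t\max_{B_{r_t}}\xi\}$, control this maximum via Lemma~\ref{l:maxpotential}, and lower-bound $U(t)$ using the already-proved lower half of Theorem~\ref{t:QLyapGWT} (the paper cites \eqref{UlowboundGWT2}). The resulting exponent $-(1-\gamma)\varrho\,t\log t+O(t\log^{\ssup 3}t)$ is precisely what the paper writes down.

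You are right to be uneasy about \eqref{e:NSP1}. The paper's own proof, like yours, invokes the $\GW$-specific lower bound and implicitly the relation $\log L_r\sim\vartheta r$, so neither argument actually covers the full stated generality $G\in\mathfrak{G}_{\lceil t^\gamma\rceil}$ (nor, strictly, all of $\mathfrak{G}_\infty$ in \eqref{e:NSP2}). However, your proposed patches do not close this gap either: membership in $\mathfrak{G}_{\lceil t^\gamma\rceil}$ guarantees only $|V|\geq t^\gamma+1$, not the ``mild hypothesis'' $|V|\geq\ell_t^\star+1$ you impose, and Lemma~\ref{l:eigislands}, which you invoke to sharpen the numerator on sub-exponential graphs, is itself stated only for $\mathfrak{G}^{(\vartheta)}_\infty$. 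In the paper this over-generality is harmless, since the lemma is applied only to $\GW$.
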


\begin{proof}
By Lemma~\ref{l:maxpotential} with $g_r = 2 \varrho \log r$, we may assume that
\begin{equation}
\max_{x \in B_{\lceil t^\gamma \rceil}} \xi(x) \leq \varrho \log \log L_{\lceil t^\gamma \rceil} + 2 \varrho 
= \gamma \varrho \log t + 2 \varrho + o(1) \text{ as } t \to \infty.
\end{equation}
By \eqref{UlowboundGWT2}, for some constant $C>0$,
\begin{equation*}
\frac{\E_{\cO} \left[\ee^{\int_0^t \xi(X_s) \dd s} \1_{\{\tau_{B^\cc_{\lceil t^\gamma \rceil}} > t\}} \right]}{U(t)}
\leq \ee^{C t \log^{\ssup 3} t}\ee^{ - (1-\gamma)\varrho t \log t} \underset{t \to \infty}{\longrightarrow} 0.
\qedhere
\end{equation*}
\end{proof}

For the remainder of the proof we fix $\gamma \in (\alpha,1)$ with $\alpha$ as in \eqref{e:def_Sr}. Let
\begin{equation}
\label{e:defrkt}
K_t := \lceil t^{1-\gamma} \log t \rceil, \qquad r^{(k)}_t := k \lceil t^\gamma \rceil, \; 
1 \leq k \leq K_t \qquad \text{and} \qquad \ell_t := K_t \lceil t^\gamma \rceil \geq t \log t.
\end{equation}
For $1 \leq k \leq K_t$ and $G \in \mathfrak{G}^{(\vartheta)}_\infty$, define
\begin{equation}
\cN^{\ssup k}_t := \left\{ \pi \in \scrP(\cO, V) \colon\, \supp(\pi) \subset B_{r^{\ssup {k+1}}_t}, 
\supp(\pi)\cap B^\cc_{r^{\ssup k}_t} \neq \emptyset \right\}
\end{equation}
and set
\begin{equation}
U^{\ssup k}_t := \E_\cO \left[ \ee^{\int_0^t \xi(X_s) \dd s} \1_{\{\pi_{[0,t]}(X) \in \cN^{\ssup k}_t \}}\right].
\end{equation}
Recall the scale $\mathfrak{r}_t = \varrho t /\log \log t$.

\begin{lemma}{\bf [Upper bound on $U^{\ssup k}_t$]}
\label{l:UBpieces}
For any $\varepsilon>0$ and any $G \in \mathfrak{G}^{(\vartheta)}_\infty$, $\Prob$-almost surely eventually as $t \to \infty$,
\begin{equation}
\label{e:UBpieces}
\sup_{1 \leq k \leq K_t} \frac 1t \log U^{\ssup k}_t \leq \varrho \log (\vartheta \mathfrak{r}_t) 
- \varrho - \widetilde{\chi}(\varrho) + \varepsilon. 
\end{equation}
\end{lemma}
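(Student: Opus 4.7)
The plan is to apply Proposition~\ref{p:massclass} directly with ball radius $r:=r^{\ssup{k+1}}_t$ and path family $\cN:=\cN^{\ssup k}_t$. First I would check the hypotheses: since $\log L_{r^{\ssup{k+1}}_t}\leq r^{\ssup{k+1}}_t\log d_{\max}=O(t\log t)$, a choice of $\kappa\in(\alpha,\gamma)$ ensures $(\log L_r)^\kappa=o(t^\gamma)$, and every $\pi\in\cN^{\ssup k}_t$ reaches distance at least $r^{\ssup k}_t\geq t^\gamma$ from $\cO$, so the geometric condition on paths is satisfied uniformly in $k$. By \eqref{e:GWinfrakG}, the a.s.\ version of Proposition~\ref{p:massclass} is available.

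Next, I would pick $A>\widetilde{\chi}(\varrho)\vee A_0(d_{\max})$. Corollary~\ref{c:eigislandsGW} then gives, $\Prob\times\Probgr$-a.s.\ eventually as $r\to\infty$, $\lambda_{r,A}(\pi)\leq a_{L_r}-\widetilde{\chi}(\varrho)+\varepsilon/4$ uniformly in $\pi$. I would use the \emph{constant} assignment $\gamma_\pi:=a_{L_r}-\widetilde{\chi}(\varrho)+\varepsilon/4+\ee^{-S_r}$, which satisfies \eqref{e:cond_massclass1} for large $t$ (since $A>\widetilde{\chi}(\varrho)$), and let $z_\pi\in\supp(\pi)$ realise $\max_\ell\dist_G(\pi_\ell,\cO)$, giving $\dist_G(\cO,z_\pi)\geq r^{\ssup k}_t$ and \eqref{e:cond_massclass2}. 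Substituting into \eqref{e:mass_class} and dividing by $t$ then yields
\[
\tfrac{1}{t}\log U^{\ssup k}_t\;\leq\; a_{L_r}-\widetilde{\chi}(\varrho)+\tfrac{\varepsilon}{4}+\ee^{-S_r}-\big(\log^{\ssup 3}L_r-c_A\big)\tfrac{r^{\ssup k}_t}{t}.
\]

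To reach the target, I would invoke \eqref{e:volumerateGW} to write $a_{L_r}=\varrho\log(\vartheta r^{\ssup{k+1}}_t)+o(1)$ and $\log^{\ssup 3}L_r=(1+o(1))\log\log t$ uniformly for $r^{\ssup{k+1}}_t\in[2t^\gamma,\ell_t+\lceil t^\gamma\rceil]$. Setting $y=y(k):=r^{\ssup{k+1}}_t/\mathfrak{r}_t$ and using $r^{\ssup k}_t/t=\tfrac{k}{k+1}\,y\,\varrho/\log\log t$, the desired inequality reduces to the elementary bound
\[
\frac{k}{k+1}\,y(k)-\log y(k)\;\geq\; 1-\tfrac{\varepsilon}{2\varrho}+o(1),\qquad 1\leq k\leq K_t.
\]

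The main obstacle will be verifying this last inequality uniformly in $k$. The real function $y\mapsto y-\log y$ attains its minimum $1$ at $y=1$, and the prefactor $k/(k+1)<1$ is potentially dangerous precisely there. However, the explicit formula $y(k)=(k+1)\lceil t^\gamma\rceil\log\log t/(\varrho t)$ shows that $y(k)\to 0$ whenever $k$ stays bounded, so that $-\log y(k)\to\infty$ dominates trivially; while $y(k)=\Theta(1)$ forces $k\asymp\varrho t^{1-\gamma}/\log\log t\to\infty$, in which case $k/(k+1)\to 1$ and the clean inequality $y-\log y\geq 1$ becomes available. A short case split along these two scaling regimes then gives $\inf_{1\leq k\leq K_t}\big[\tfrac{k}{k+1}y(k)-\log y(k)\big]\to 1$ as $t\to\infty$, which delivers the required $\varepsilon$-slack and concludes the proof.
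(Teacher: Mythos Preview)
Your approach is essentially the paper's: apply Proposition~\ref{p:massclass} with radius $r=r^{\ssup{k+1}}_t$ and $\cN=\cN^{\ssup k}_t$, bound $\gamma_\pi$ via Corollary~\ref{c:eigislandsGW}, then optimize. The only difference lies in the final optimization. The paper observes that replacing $r^{\ssup k}_t$ by $r^{\ssup{k+1}}_t$ in the distance term costs at most $\tfrac{\lceil t^\gamma\rceil}{t}\big(\log^{\ssup 3}L_r - c_A\big)\leq 2t^{\gamma-1}\log\log(2t\log t)=o(1)$ \emph{uniformly in $k$}; after this, everything is expressed through the single radius $r^{\ssup{k+1}}_t$, and one simply bounds by the supremum over all $r>0$ of $F_{c,t}(r):=\varrho\log(\vartheta r)-\tfrac{r}{t}(\log\log r-c)$, which equals $\varrho\log(\vartheta\mathfrak{r}_t)-\varrho+o(1)$. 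No case analysis is needed.

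Your route---keeping the factor $k/(k+1)$ and analysing $y(k)$---also works, but the claimed reduction to ``$\tfrac{k}{k+1}y(k)-\log y(k)\geq 1-\tfrac{\varepsilon}{2\varrho}+o(1)$'' is not quite right as stated: the contribution $c_A\, r^{\ssup k}_t/t$ (and the $(1+o(1))$ multiplier on $\log\log t$) produces an error of order $y(k)/\log\log t$, which is \emph{not} $o(1)$ when $y(k)$ is large (for $k$ near $K_t$ one has $y(k)\asymp\log t\,\log\log t$). Your two-regime case split also omits $y(k)\to\infty$. Both are easy to fix---for large $y$ the left side grows linearly in $y$ and absorbs any $O(y/\log\log t)$ term---so the argument is salvageable, but the paper's one-line reduction to $F_{c,t}$ avoids these loose ends.
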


\begin{proof}
Before we apply Proposition~\ref{p:massclass}, we first do a bit of analysis. For $c>0$, let
\begin{equation}
F_{c,t}(r) := \varrho \log (\vartheta r) - \frac{r}{t} \left( \log \log r - c\right), \qquad r>0.
\end{equation}
Note that $F_{c,t}$ is maximized at a point $r_{c,t}$ satisfying
\begin{equation}
\varrho t = r_{c,t} \log \log r_{c,t} -c r_{c,t} + \frac{r_{c,t}}{\log r_{c,t}}.
\end{equation}
In particular, $r_{c,t} \sim \mathfrak{r}_t$, which implies
\begin{equation}
\label{e:prUBpieces1}
\sup_{r > 0} F_{c,t}(r) \leq \varrho \log (\vartheta \mathfrak{r}_t) - \varrho + o(1) \qquad \text{ as } t \to \infty.
\end{equation}

Next, fix $k \in \{1, \ldots, K_t\}$. For $\pi \in \cN^{\ssup k}_t$, let
\begin{equation}
\gamma_\pi := \lambda_{r^{\ssup {k+1}}_t, A}(\pi) + \exp\{-S_{\lceil t^\gamma \rceil} \}, 
\qquad z_\pi \in \supp(\pi), |z_\pi| > r^{\ssup k}_t.
\end{equation}
By Proposition~\ref{p:massclass}, almost surely eventually as $t \to \infty$,
\begin{equation}
\label{e:prUBpieces2}
\begin{aligned}
\frac 1t \log U^{\ssup k}_t 
\leq \gamma_\pi + \frac{|z_\pi|}{t} \left( \log \log r^{(k+1)}_t -c_A +o(1) \right).
\end{aligned}
\end{equation}
Using Corollary~\ref{c:eigislandsGW} and $\log L_r \sim \vartheta r$, we bound
\begin{equation}
\begin{aligned}
\gamma_\pi 
\leq \varrho \log (\vartheta r^{(k+1)}_t) - \widetilde{\chi}(\varrho) + \tfrac12 \varepsilon + o(1).
\end{aligned}
\end{equation}
Moreover, $|z_\pi| > r^{\ssup {k+1}}_t - \lceil t^\gamma \rceil$ and
\begin{equation}
\frac{\lceil t^\gamma \rceil}{t} \left(\log \log r^{\ssup {k+1}}_t - c_A \right) \leq 
\frac{2}{t^{1-\gamma}} \log \log (2 t \log t) = o(1),
\end{equation}
which allows us to further bound \eqref{e:prUBpieces2} by
\begin{equation}
\begin{aligned}
\varrho \log (\vartheta r^{\ssup {k+1}}_t) - \frac{r^{\ssup {k+1}}_t}{t} 
\left( \log \log r^{\ssup {k+1}}_t - 2c_A \right) - \widetilde{\chi}(\varrho) + \tfrac12 \varepsilon + o(1).
\end{aligned}
\end{equation}
Applying \eqref{e:prUBpieces1} we obtain $\displaystyle \frac 1t \log U^{\ssup k}_t  < \varrho \log (\vartheta \mathfrak{r}_t) - \varrho - \widetilde{\chi}(\varrho) + \varepsilon$.
\end{proof}

\begin{proof}[Proof of upper bound in \eqref{e:QLyapGWT}]
To avoid repetition, all statements are assumed to be made $\Probgr \times \Prob$-almost surely eventually as $t \to \infty$. Let $G = \GW$ and note that $\GW \in \mathfrak{G}^{(\vartheta)}_\infty$ almost surely, where $\vartheta$ is as in \eqref{e:volumerateGW}. Define
\begin{equation}
U^{\ssup 0}_t := \E_\cO \left[ \ee^{\int_0^t \xi(X_s) \dd s} 
\1_{\{\tau_{B^\cc_{\lceil t^\gamma \rceil}}>t\}}\right],
\qquad
U^{\ssup \infty}_t := \E_\cO \left[ \ee^{\int_0^t \xi(X_s) \dd s} 
\1_{\{\tau_{B^\cc_{\lceil t \log t \rceil}} \leq t\}}\right].
\end{equation}
Note that
\begin{equation}
U(t) \leq U_t^{\ssup 0} + U^{\ssup \infty}_t + K_t \max_{1 \leq k \leq K_t} U^{\ssup k}_t
\end{equation}
and, since $U^{\ssup 0}_t + U^{\ssup \infty}_t \leq o(1) U(t)$ by Lemmas~\ref{l:firsttruncation}--\ref{l:noshortpaths}
and \eqref{UlowboundGWT1}, 
\begin{equation}
U(t) \leq 2K_t \max_{1 \leq k \leq K_t} U^{\ssup k}_t 
\quad \text{ and so } \quad
\frac 1 t \log U(t) \leq \frac{\log (2 K_t) }{t} + \max_{1 \leq k \leq K_t} \frac 1t \log U^{\ssup k}_t.
\end{equation}
By Lemma~\ref{l:UBpieces} and \eqref{e:defrkt}, for any $\varepsilon>0$,
\begin{equation}
\begin{aligned}
\frac 1 t \log U(t) 
& \leq \varrho \log (\vartheta \mathfrak{r}_t) - \varrho - \widetilde{\chi}(\varrho) + \varepsilon + o(1)\\
\end{aligned}
\end{equation}
therefore, $\Probgr \times \Prob$-almost surely,
\begin{equation}
\limsup_{t \to \infty} \left\{\frac 1t \log U(t) 
-  \varrho \log \left( \frac{\vartheta \varrho t}{\log \log t} \right) \right\} 
\leq - \varrho -\widetilde{\chi}(\varrho) + \varepsilon.
\end{equation}
Since $\varepsilon>0$ is arbitrary, this completes the proof of the lower bound in \eqref{e:QLyapGWT}.
\end{proof}


\section{Proof of Theorem \ref{t:QLyapCM} }
\label{s:proofThCM}

In this section we give the proof of Theorem~\ref{t:QLyapCM}. The proof is based on the fact that, up to a radius growing slower than $\log \Phi_n$ (cf.\ \eqref{e:defPhin}), the configuration model equals a Galton-Watson tree with high probability. From this the result will follow via Theorem~\ref{t:QLyapGWT} and Lemma~\ref{l:firsttruncation}.

To describe the associated Galton-Watson tree, we define a random variable $D_\star$ as the size-biased version of $D$ in Assumption~(CM)$(1)$, i.e.,
\begin{equation}
\label{e:defDstar}
P(D_\star = k) = \frac{k P(D = k)}{E[D]}.
\end{equation}

\begin{proposition}{\bf [Coupling of $\, \UG_n$ and $\GW$]}
\label{p:couplingCMGW}
Let $\UG_n=(V_n, E_n, \cO_n)$ be the uniform simple random graph with degree sequence $\mathfrak{d}^{\ssup n}$ satisfying Assumption~(CM), and let $\GW = (V,E, \cO)$ be a Galton-Watson tree with initial degree distribution $D_0=D$ and general degree distribution $D_g = D_\star$. There exists a coupling $\widetilde{\Prob}$ of $\,\UG_n$ and $\GW$ such that, for any $m_n \in \N$ satisfying $1 \ll m_n \ll \log \Phi_n$,
\begin{equation}
\lim_{n \to \infty} \widetilde{\Prob} \left( B^{\UG_n}_{m_n}(\cO_n) = B^{\GW}_{m_n}(\cO) \right) = 1.
\end{equation}
\end{proposition}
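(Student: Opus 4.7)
The plan is to construct $\widetilde{\Prob}$ via a joint breadth-first exploration that grows $B^{\UG_n}_{m_n}(\cO_n)$ and $B^{\GW}_{m_n}(\cO)$ vertex-by-vertex and matches them isomorphically. By Propositions~\ref{p:CMsimpleconnected}--\ref{p:CMisUG}, $\UG_n$ has the law of $\CM_n$ conditioned on being simple, and since the simplicity probability is bounded away from zero uniformly in $n$ by \eqref{e:CMprobsimp}, it suffices to construct the coupling with $\CM_n$ in place of $\UG_n$. First pair the roots $\cO$ and $\cO_n$ using an optimal total-variation coupling of $D$ and $d^{\ssup n}_{\cO_n}$, which fails with probability $\dist_{\rm TV}(d^{\ssup n}_{\cO_n}, D) \leq 1/\Phi_n$. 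Then, at each BFS step, select the next unmatched half-edge incident to an already-explored vertex in $\CM_n$ and pair it uniformly with one of the remaining free half-edges; simultaneously, attach the corresponding edge in $\GW$ to a fresh child whose total degree is an independent copy of $D_\star$. Couple the two revealed degrees optimally. So long as the $\CM_n$ half-edge lands on a previously unseen vertex \emph{and} the degree coupling succeeds, the two balls grow isomorphically.

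At each step $t$, the coupling can fail in two ways: \textbf{(i)} a \emph{collision}, i.e., the $\CM_n$ half-edge pairs with one at an already-explored vertex, creating a cycle absent from $\GW$; \textbf{(ii)} a mismatch in the optimal degree coupling. Let $V_t$ denote the number of explored vertices after $t$ pairings, and set $V := d_{\max}^{m_n+1}$, which bounds $V_t$ throughout the exploration of $B_{m_n}$. Since Assumption~(CM) gives $\sum_i d_i^{\ssup n} \geq d_{\min} n \gtrsim n$, a collision at step $t$ has conditional probability $O(V/n)$. For \textbf{(ii)}, the conditional law of the degree of the newly revealed vertex, given no collision, is the size-biased empirical degree distribution of the unseen vertices; this differs from the law of $D_\star$ in total variation by $O(V/n + \dist_{\rm TV}(d^{\ssup n}_{\cO_n}, D))$, using bounded degrees to show that size-biasing is Lipschitz in total variation. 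Summing over $t \leq V$ steps yields a total failure probability of order $O(V^2/n + V/\Phi_n)$. Since $\Phi_n \leq n$ and $m_n = o(\log \Phi_n)$, we have $V = d_{\max}^{m_n+1} = \Phi_n^{o(1)} \leq n^{o(1)}$, so both error terms vanish as $n \to \infty$.

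The main technical obstacle is a careful bookkeeping for \textbf{(ii)}: one must check that at every step $t \leq V$ the conditional distribution of the next vertex's degree in $\CM_n$ stays within total variation $O(\Phi_n^{-1} + V/n)$ of the law of $D_\star$. This requires combining the hypothesis on $\dist_{\rm TV}(d^{\ssup n}_{\cO_n}, D)$ with the effect of half-edge depletion during the exploration, and uses the bounded-degree Assumption~(CM)(3) to keep all size-biasing ratios uniformly controlled. Everything else reduces to bookkeeping that has become standard in the local-weak-convergence analysis of the configuration model (see e.g.\ \cite[Chapter~4]{vdH17a}).
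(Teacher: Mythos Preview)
Your proposal is correct and follows exactly the approach the paper invokes: the paper simply cites \cite[Proposition~5.4]{vdH17b} for the BFS-exploration coupling of $\CM_n$ with $\GW$ and then transfers to $\UG_n$ via \eqref{e:CMprobsimp}, while you have written out that argument in detail. Your error bookkeeping (collision probability $O(V/n)$, degree-mismatch probability $O(V/n + 1/\Phi_n)$, summed over $V = d_{\max}^{m_n+1} = \Phi_n^{o(1)}$ steps) is the standard one and matches what the cited reference does.
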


\begin{proof}
For $\CM_n$ in place of $\UG_n$, this is a consequence of the proof of \cite[Proposition~5.4]{vdH17b}: the statement there only covers coupling $|B_{m_n}|$, but the proof actually gives $B_{m_n}$. The fact that $m_n$ may be taken up to $o(\log \Phi_n)$ can be inferred from the proof. In fact, $m_n$ could be taken up to $c \log \Phi_n$ with some $c=c(\nu)>0$. The result is then passed to $\UG_n$ by \eqref{e:CMprobsimp}
(see e.g.\ \cite[Corollary~7.17]{vdH17a}).
\end{proof}

\begin{proof}[Proof of Theorem~\ref{t:QLyapCM}]
First note that, by Propositions~\ref{p:CMsimpleconnected}--\eqref{p:CMisUG}, we may assume that $\UG_n$ is connected,
thus fitting the setup of Section~\ref{s:prep}.
Let $U_n(t)$ be the total mass for $\UG_n$ and $U(t)$ the total mass for $\GW$ as in Proposition~\ref{p:couplingCMGW}.
Define
\begin{equation}
U_n^{\circ}(t) := \E_{\cO_n} \left[ \ee^{\int_0^t \xi(X_s) \dd s} \1_{\{\tau_{B^\cc_{t \log t} > t}\}}\right],
\end{equation}
and analogously $U^{\circ}(t)$. By Lemma~\ref{l:firsttruncation} and Proposition~\ref{p:couplingCMGW}, whp as $n \to \infty$, 
\begin{equation}
U_n(t_n) = U_n^{\circ}(t_n) + o(1) 
= U^{\circ}(t_n) + o(1) = U(t_n) + o(1),
\end{equation}
and so \eqref{e:QLyapCM} follows from Theorem~\ref{t:QLyapGWT} after we note that $\nu$ in \eqref{e:defnu} is equal to $E[D_\star - 1]$.
\end{proof}


\appendix

\section{Analysis of $\chi(\rho)$}
\label{s:analysischi}

In this appendix we study the variational problem in \eqref{e:defchiG}. 
In particular, we prove the alternative representations in Proposition~\ref{p:dualrepchi}, 
and we prove Theorem~\ref{t:tildechilargerho}, i.e., we identify for $\varrho\geq 1/\log(d_{\rm min}+1)$ the quantity $\widetilde\chi(\varrho)$ that appears in Theorems \ref{t:QLyapGWT} and \ref{t:QLyapCM} as $\chi_G$ with $G$ the infinite tree with homogeneous degree $d_{\rm min}\in\N \backslash \{1\}$, the smallest degree that has a positive probability in our random graphs. 
In other words, we show that the infimum in \eqref{e:deftildechi} is attained on the infinite tree with the smallest admissible degrees.

It is not hard to understand heuristically why the optimal tree is infinite and has the smallest degree: the first part in \eqref{e:defchiG} (the quadratic energy term coming from the Laplace operator) has a spreading effect and is the smaller the less bonds there are. However, proving this property is not so easy, since the other term (the Legendre transform from the large-deviation term of the random potential) has an opposite effect. In the setting where the underlying graph is $\Z^d$ instead of a tree, this problem is similar to the question whether or not the minimiser has compact support. However, our setting is different because of the exponential growth of balls on trees. We must therefore develop new methods. 

Indeed, we will not study the effect on the principal eigenvalue due to the restriction of a large graph to a subgraph, but rather due to an opposite manipulation, namely, the glueing of two graphs obtained by adding one single edge (or possibly a joining vertex). The effect of such a glueing is examined in Section~\ref{ss:gentheory}. The result will be used in Section~\ref{ss:treesmindeg} to finish the proof of Theorem \ref{t:tildechilargerho}.  Before that, we discuss in Section~\ref{ss:altrepchi} alternative representations for $\chi$ and prove Proposition~\ref{p:dualrepchi}. 

In this section, no probability is involved. We drop $\varrho$ from the notation at many places.


\subsection{Alternative representations}
\label{ss:altrepchi}

Fix a graph $G=(V,E)$. Recall that $\cP(V)$ denotes the set of probability measures on $V$, and recall that the constant $\chi_G = \chi_G(\varrho) $ in \eqref{e:defchiG} is defined as $\inf_{p \in \cP(V)} [I_E(p) + \varrho J_V(p)]$ with $I,J$ as in \eqref{e:defIJ}. As the next lemma shows, the constant $\widehat{\chi}$ in \eqref{e:defhatchi} can be also represented in terms of $I$,$J$.

\begin{lemma}{\bf [First representation]}
\label{l:hatchidualrep}
For any graph $G=(V,E)$ and any $\Lambda \subset V$,
\begin{equation}
\label{e:hatchidualrep1}
\widehat{\chi}_V(\varrho; G) = \inf_{\substack{ p \in \cP(V)\colon\, \\ \supp(p) \subset \Lambda} } 
\left[ I_E(p) + \varrho J_V(p) \right].
\end{equation}
In particular,
\begin{equation}
\label{e:hatchidualrep2}
\widehat{\chi}_\Lambda(\varrho;G) \geq \widehat{\chi}_V(\varrho ; G) = \chi_G(\varrho).
\end{equation}
\end{lemma}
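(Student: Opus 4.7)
The plan is to derive \eqref{e:hatchidualrep1} by a standard convex-duality / minimax argument that identifies the eigenvalue optimization over $q$ with the variational problem over $p$. I read the left-hand side of \eqref{e:hatchidualrep1} as $\widehat\chi_\Lambda$ (the right-hand side is $\Lambda$-dependent, so this is needed for consistency).

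First I would use the Rayleigh--Ritz formula \eqref{e:RRformula} to express $\lambda_\Lambda(q;G)$ as a quadratic-form optimization. Discrete integration by parts gives $\langle \Delta_G \phi, \phi\rangle_{\ell^2(V)} = -\sum_{\{x,y\}\in E}(\phi(x) - \phi(y))^2$; since $\phi$ is supported on $\Lambda$, the Dirichlet condition on $\Lambda^{\cc}$ is automatically encoded. Reducing to $\phi \geq 0$ without loss of generality (using $(|\phi(x)|-|\phi(y)|)^2 \le (\phi(x)-\phi(y))^2$) and substituting $p := \phi^2 \in \cP(V)$, I obtain
\[
\lambda_\Lambda(q;G) \;=\; \sup_{\substack{p \in \cP(V) \\ \supp(p) \subset \Lambda}} \bigl[-I_E(p) + \langle q,p\rangle\bigr],
\]
with $\langle q,p\rangle := \sum_{x} q(x) p(x)$ and the convention $0\cdot(-\infty) = 0$, so that $q(x) = -\infty$ merely forces $p(x) = 0$.

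Next, inserting this into $\widehat\chi_\Lambda = -\sup_q \lambda_\Lambda(q)$ and swapping the two suprema, the problem reduces to the inner optimization $\sup_{q:\,\cL_V(q;\varrho)\leq 1}\langle q,p\rangle$ for fixed $p$. The substitution $r(x) := \e^{q(x)/\varrho}$ turns this into $\varrho\,\sup\{\sum_x p(x)\log r(x):\, r \geq 0,\,\sum_x r(x) \leq 1\}$, which by Gibbs' inequality (non-negativity of relative entropy $H(p\|r)$) equals $-\varrho J_V(p)$, attained at $r = p$ (i.e.\ the saturating potential is $q^\star = \varrho \log p$ on $\supp p$). Substituting back yields \eqref{e:hatchidualrep1}.

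The inequality $\widehat\chi_\Lambda \geq \widehat\chi_V$ in \eqref{e:hatchidualrep2} then follows by monotonicity of the infimum in the constraint set: enlarging $\Lambda$ to $V$ enlarges the feasible family of $p$'s. The identity $\widehat\chi_V(\varrho;G) = \chi_G(\varrho)$ is simply the $\Lambda = V$ case of \eqref{e:hatchidualrep1} compared with \eqref{e:defchiG}. I foresee no real obstacle; the only points requiring care are the reduction to non-negative eigenfunctions in Rayleigh--Ritz and the handling of extended-real $q$ in the duality step, both routine.
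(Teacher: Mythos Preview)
Your proof is correct and is precisely the standard duality argument. The paper itself does not give a proof but simply refers to \cite[Lemma~2.17]{GM98}; your write-up supplies exactly those omitted details, and the approach (Rayleigh--Ritz, substitution $p=\phi^2$, interchange of the two suprema, and the Gibbs-type optimisation for the inner problem) is the same as in \cite{GM98}. Your reading of the left-hand side of \eqref{e:hatchidualrep1} as $\widehat\chi_\Lambda$ rather than $\widehat\chi_V$ is also correct --- this is a typo in the statement --- and your observation that the interchange is a trivial $\sup$--$\sup$ swap (no genuine minimax theorem needed) is worth keeping, since the word ``minimax'' in your opening line slightly overstates what is required.
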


\begin{proof}
For the proof of \eqref{e:hatchidualrep1}, see \cite[Lemma~2.17]{GM98}. Moroever, \eqref{e:hatchidualrep2} follows from \eqref{e:hatchidualrep1}.
\end{proof}

We next consider the constant $\chi^{\ssup 0}_G$ in \eqref{e:defchi0} for infinite rooted graphs $G = (V,E,\cO)$. 
Note that, by \eqref{e:hatchidualrep1}, $\widehat{\chi}_{B_r}(\varrho; G)$ is non-increasing in $r$. 
With \eqref{e:hatchidualrep2} this implies
\begin{equation}
\label{e:chi0small}
\chi_G^{\ssup 0}(\varrho) = \lim_{r \to \infty} \widehat{\chi}_{B_r}(\varrho ; G) \geq \chi_G(\varrho).
\end{equation}

\begin{lemma}{\bf [Second representation]}
\label{l:chi0large}
For any rooted $G\in \mathfrak{G}_\infty$, $\chi_G(\varrho) = \chi^{\ssup 0}_G(\varrho)$.
\end{lemma}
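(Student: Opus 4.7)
The plan is to establish the reverse inequality $\chi^{\ssup 0}_G(\varrho) \leq \chi_G(\varrho)$, since \eqref{e:chi0small} already gives the other direction. Given $\varepsilon>0$, I would pick a near-minimiser $p \in \cP(V)$ of $I_E + \varrho J_V$ with $I_E(p) + \varrho J_V(p) < \chi_G(\varrho) + \varepsilon$ (using that $\chi_G(\varrho) \leq d_{\max} < \infty$ via $p = \delta_\cO$), and truncate it to $B_r$: set $p_r := p \1_{B_r}/p(B_r)$, which is a probability measure supported in $B_r$ for all $r$ large enough that $p(B_r)>0$. By Lemma~\ref{l:hatchidualrep} applied to $\Lambda = B_r$, we have $\widehat{\chi}_{B_r}(\varrho;G) \leq I_E(p_r) + \varrho J_V(p_r)$, so it suffices to show that both functionals are continuous under this truncation as $r \to \infty$.

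For the entropy term, a direct computation gives
\[
J_V(p_r) = \frac{1}{p(B_r)}\sum_{x \in B_r} p(x)\bigl(-\log p(x)\bigr) + \log p(B_r).
\]
Since $-p(x)\log p(x) \geq 0$ for $p(x) \in [0,1]$, monotone convergence together with $p(B_r) \to 1$ yields $J_V(p_r) \to J_V(p)$. For the Dirichlet term, splitting $E$ into interior edges (both endpoints in $B_r$) and boundary edges (exactly one endpoint in $B_r$), and noting that $\sqrt{p_r(y)} = 0$ for $y \notin B_r$, we obtain
\[
I_E(p_r) = \frac{1}{p(B_r)}\Biggl[\;\sum_{\substack{\{x,y\} \in E\\ x,y\in B_r}} \bigl(\sqrt{p(x)} - \sqrt{p(y)}\bigr)^2 \;+\; \sum_{\substack{\{x,y\} \in E\\ x \in B_r,\, y\notin B_r}} p(x)\,\Biggr].
\]
The first sum increases to $I_E(p)$ by monotone convergence. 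The crucial observation is that, because $G\in\mathfrak{G}_\infty$ has maximal degree $d_{\max}$, the boundary sum is bounded by $d_{\max}\bigl(p(B_r) - p(B_{r-1})\bigr)$, which vanishes as $r\to\infty$ since $p$ is a probability measure and $V = \bigsqcup_{r\geq 0}(B_r \setminus B_{r-1})$ forces the mass on each sphere to be summable and hence to tend to zero. Thus $I_E(p_r) \to I_E(p)$.

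Combining the two limits, $\limsup_{r\to\infty}\bigl[I_E(p_r) + \varrho J_V(p_r)\bigr] = I_E(p) + \varrho J_V(p) < \chi_G(\varrho) + \varepsilon$, so $\chi^{\ssup 0}_G(\varrho) = \lim_{r\to\infty}\widehat{\chi}_{B_r}(\varrho;G) \leq \chi_G(\varrho) + \varepsilon$. Letting $\varepsilon \downarrow 0$ concludes the proof. The only mildly delicate step is the vanishing of the boundary contribution to $I_E(p_r)$; the bounded-degree assumption built into $\mathfrak{G}_\infty$ makes this step routine, but it is what prevents the argument from extending to graphs with unbounded degrees without extra care.
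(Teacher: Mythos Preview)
Your proof is correct and follows essentially the same route as the paper: both truncate a (near-)minimiser $p$ to $p_r = p\1_{B_r}/p(B_r)$ and show that $I_E(p_r)$ and $J_V(p_r)$ converge to $I_E(p)$ and $J_V(p)$, using the bounded-degree assumption to control the boundary contribution to $I_E$. Your bound on the boundary term via the sphere mass $d_{\max}\bigl(p(B_r)-p(B_{r-1})\bigr)$ is slightly sharper than the paper's tail bound $d_{\max}\,p(B_{r-1}^\cc)/p(B_r)$, but the argument is otherwise the same.
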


\begin{proof}
Write $G=(V,E,\cO)$.
By \eqref{e:defchiG},  Lemma~\ref{l:hatchidualrep} and \eqref{e:chi0small}, it suffices to show that, for any $p\in \cP(V)$ and $r \in \N$, there is a $p_r\in\cP(V)$ with support in $B_r$ such that 
\begin{equation}
\label{e:prchi0large1}
\liminf_{r\to\infty} \left\{ I_E(p_r)+\varrho J_V(p_r) \right\} \leq I_E(p)+\varrho J_V(p).
\end{equation}
Simply take
\begin{equation}
p_r(x)= \frac{p(x)\1_{B_r}(x)}{p(B_r)}, \qquad x \in V,
\end{equation} 
i.e., the normalized restriction of $p$ to $B_r$. Then we easily see that 
\begin{equation}
\begin{aligned}
J_V(p_r)-J_V(p) &=-\frac1{p(B_r)} \sum_{x\in B_r} p(x)\log p(x)+\log p(B_r)+\sum_{x\in V} p(x)\log p(x)\\
& \leq  \frac {J_V(p)}{p(B_r)}(1-p(B_r)) \underset{r \to \infty}{\longrightarrow} 0,
\end{aligned}
\end{equation}
where we use $\log p(B_r) \leq 0$ and $p(x) \log p(x)\leq 0$ for every $x$. As for the $I$-term,
\begin{equation}
\begin{aligned}
I_E(p_r)&=\frac1{p(B_r)}\sum_{\{x,y\}\in E \colon x,y\in B_r}\big(\sqrt{p(x)}-\sqrt{p(y)}\,\big)^2\\
&\quad+\frac 12\sum_{\{x, y\} \in E \colon x\in B_r,\, y\in B^\cc_r}\frac {p(x)}{p(B_r)}
\leq \frac{I_E(p)}{p(B_r)}+\frac{d_{\max}}{2} \frac{p(B_{r-1}^\cc)}{p(B_r)},
\end{aligned}
\end{equation}
and therefore
\begin{equation}
I_E(p_r)-I_E(p) \leq \frac{I_E(p)}{p(B_r)}(1-p(B_r))+\frac{d_{\max}}{2} 
\frac{p(B^\cc_{r-1})}{p(B_r)} \underset{r \to \infty}{\longrightarrow} 0.
\end{equation}
\end{proof}

\begin{proof}[Proof of Proposition~\ref{p:dualrepchi}]
The claim follows from Lemmas~\ref{l:hatchidualrep}--\ref{l:chi0large} and \eqref{e:chi0small}.
\end{proof}


\subsection{Glueing graphs}
\label{ss:gentheory}

Here we analyse the constant $\chi$ of a graph obtained by connecting disjoint graphs. First we show that glueing two graphs together with one additional edge does not decrease the quantity $\chi$:

\begin{lemma}{\bf [Glue two]}
\label{l:compjoin2}
Let $G_i = (V_i, E_i)$, $i=1,2$, be two disjoint connected simple graphs, and let $x_i \in V_i$, $i=1,2$. Denote by ${G}$
the union graph of $G_1$, $G_2$ with one extra edge between $x_1$ and $x_2$, i.e., $ {G} = ( {V},  {E})$ with $ {V} := V_1 \cup V_2$, $ {E} := E_1 \cup E_2 \cup \{(x_1, x_2)\}$. Then
\begin{equation}
\label{e:glueingchi}
\chi_{ {G}} \geq \min \left\{ \chi_{G_1} , \chi_{G_2}  \right\}.
\end{equation}
\end{lemma}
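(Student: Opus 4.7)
The plan is to work directly from the variational definition of $\chi$ in \eqref{e:defchiG} and show that for any $p \in \cP(V)$, we have $I_{ {E}}(p) + \varrho J_{ {V}}(p) \geq \min\{\chi_{G_1},\chi_{G_2}\}$. The key observation is that the extra edge $\{x_1,x_2\}$ only adds a non-negative term to $I_{ {E}}$, so the glueing can only hurt the infimum.

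The first step is to split $p$ according to the partition $ {V} = V_1 \sqcup V_2$. Set $\alpha_i := p(V_i)$ for $i=1,2$, so that $\alpha_1+\alpha_2=1$, and, when $\alpha_i>0$, define the conditional probability measures $p_i \in \cP(V_i)$ by $p_i(x) := p(x)/\alpha_i$ for $x\in V_i$. A direct computation then yields the decompositions
\begin{equation}
J_{ {V}}(p) = \alpha_1 J_{V_1}(p_1) + \alpha_2 J_{V_2}(p_2) + H(\alpha_1,\alpha_2),
\end{equation}
with $H(\alpha_1,\alpha_2) := -\alpha_1\log\alpha_1 - \alpha_2\log\alpha_2 \geq 0$, and
\begin{equation}
I_{ {E}}(p) = \alpha_1 I_{E_1}(p_1) + \alpha_2 I_{E_2}(p_2) + \bigl(\sqrt{p(x_1)}-\sqrt{p(x_2)}\,\bigr)^2,
\end{equation}
where one uses $\sum_{\{x,y\}\in E_i}(\sqrt{p(x)}-\sqrt{p(y)})^2 = \alpha_i I_{E_i}(p_i)$ by pulling out the constant $\alpha_i$ from under the square root.

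The second step is to combine these two identities. Dropping the non-negative terms $H(\alpha_1,\alpha_2)$ and $(\sqrt{p(x_1)}-\sqrt{p(x_2)})^2$, and then applying the defining bound $I_{E_i}(p_i)+\varrho J_{V_i}(p_i) \geq \chi_{G_i}$ for each $i$ with $\alpha_i>0$, one gets
\begin{equation}
I_{ {E}}(p) + \varrho J_{ {V}}(p) \;\geq\; \alpha_1 \chi_{G_1} + \alpha_2 \chi_{G_2} \;\geq\; \min\{\chi_{G_1},\chi_{G_2}\}.
\end{equation}
Taking the infimum over $p$ gives \eqref{e:glueingchi}.

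The only subtlety is the boundary case $\alpha_i\in\{0,1\}$, where one of the conditional measures $p_i$ is undefined. This is handled directly: if, say, $\alpha_1=0$, then $p$ is supported on $V_2$, so $J_{ {V}}(p)=J_{V_2}(p)$ while $I_{ {E}}(p) = I_{E_2}(p) + p(x_2) \geq I_{E_2}(p)$ (the extra-edge contribution reduces to $p(x_2)\geq 0$), and the bound $I_{ {E}}(p)+\varrho J_{ {V}}(p)\geq \chi_{G_2}$ is immediate. I do not anticipate a genuine obstacle here; the whole argument is one decomposition plus the observation that adding a bond and splitting entropy via the convexity identity both push the functional upward.
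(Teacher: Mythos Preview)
Your proof is correct and follows exactly the same approach as the paper: decompose $p$ into conditional measures on $V_1$ and $V_2$, write $I_E(p)$ and $J_V(p)$ as convex combinations plus non-negative remainders, and drop the latter to conclude. The only cosmetic difference is that the paper handles the degenerate case $\alpha_i=0$ by formally setting $p_i := \1_{x_i}$ so that the decomposition identities remain valid (with all $i$-terms vanishing), whereas you treat it as a separate case; both are fine.
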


\begin{proof} 
Given $p \in \cP(V)$, let $a_i = p(V_i)$, $i=1,2$, and define $p_i \in \cP(V_i)$ by putting
\begin{equation}
p_i(x) := \begin{cases}
\frac{1}{a_i} p(x) \1_{V_i}(x) & \text{if } a_i> 0,\\
\1_{x_i}(x) & \text{otherwise.}
\end{cases}
\end{equation} 
Straightforward manipulations show that
\begin{equation}
I_E(p) = \sum_{i=1}^2 a_i I_{E_i}(p_i) + \left( \sqrt{p(x_1)} - \sqrt{p(x_2)}\right)^2, 
\qquad J_V(p) = \sum_{i=1}^2 \left[ a_i J_{V_i}(p_i) - a_i \log a_i \right],
\end{equation}
and so
\begin{equation}
I_E(p) + \varrho J_V(p) \geq \sum_{i=1}^2 a_i \Big[ I_{E_i}(p_i) 
+ \varrho J_{V_i}(p_i) \Big]  \geq \min\{\chi_{G_1}, \chi_{G_2}\}.
\end{equation}
The proof is completed by taking the infimum over $p \in \cP(V)$.
\end{proof}

Below it will be useful to define, for $x \in V$,
\begin{equation}
\label{e:altchirest}
\chi^{\ssup{x,b}}_G  = \inf_{\substack{p \in \cP(V),\\p(x)=b}} [I_E(p) + \varrho J_V(p)],
\end{equation}
i.e., a version of $\chi_G$ with ``boundary condition'' $b$ at $x$. It is clear that $\chi^{\ssup{x,b}}_G \geq \chi_G$.

Next we glue several graphs together and derive representations and estimates for the corresponding $\chi$. For $k \in \N$, let $G_i = (V_i, E_i)$, $1 \leq i \leq k$, be a collection of disjoint graphs. Let $x$ be a point not belonging to $\bigcup_{i=1}^k V_i$. For a fixed choice $y_i \in V_i$, $1 \leq i \leq k$, we denote by $\overline{G}_k = (\overline{V}_k, \overline{E}_k)$ the graph obtained by adding an edge from each $y_1, \ldots, y_k$ to $x$, i.e., $\overline{V}_k = V_1 \cup \cdots \cup V_k \cup \{x\}$ and $\overline{E}_k = E_1 \cup \cdots \cup E_k \cup  \{(y_1, \cO),\dots,(y_k,x)\}$.

\begin{lemma}{\bf [Glue many plus vertex]}
\label{l:joiningk}
For any $\varrho>0$, any $k \in \N$, and any $G_i=(V_i, E_i)$, $y_i \in V_i$, $1\leq i \leq k$,
\begin{equation}
\label{e:joiningk}
\begin{aligned}
\chi_{\overline{G}_{k}}  = 
& \inf_{\substack{0 \leq c_i \leq a_i \leq 1, \\ a_1 + \cdots + a_k \leq 1}}
\Big\{
 \sum_{i=1}^k a_i \left(\chi_{G_i}^{\ssup{y_i, c_i/a_i}} - \varrho \log a_i \right) \\
& \;\; + \sum_{i=1}^k \left( \sqrt{c_i} - \sqrt{1-\sum_{i=1}^k a_i} \right)^2
 - \varrho \Big(1-\sum_{i=1}^k a_i \Big) \log \Big(1-\sum_{i=1}^k a_i\Big)
\Big\}.
\end{aligned}
\end{equation}
\end{lemma}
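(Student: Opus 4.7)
The plan is to parametrise an arbitrary $p \in \cP(\overline{V}_k)$ by the masses it puts on each component of $\overline{G}_k$ and then observe that the functional $I_{\overline{E}_k} + \varrho J_{\overline{V}_k}$ splits along this parametrisation, so that the infimum factors over the components. Concretely, given $p \in \cP(\overline{V}_k)$, set
\begin{equation}
a_i := p(V_i), \quad c_i := p(y_i), \quad b := p(x) = 1 - \sum_{i=1}^k a_i,
\end{equation}
and, whenever $a_i > 0$, let $p_i \in \cP(V_i)$ be the normalised restriction $p_i(v) := p(v)/a_i$ for $v\in V_i$, so that $p_i(y_i) = c_i/a_i$. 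Conversely, every admissible tuple $(a_i, c_i, p_i)_{i=1}^k$ with $p_i(y_i) = c_i/a_i$ determines such a $p$.

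Next I would decompose the two functionals along this splitting. Since $\overline{E}_k$ is the disjoint union of the $E_i$ and the new joining edges $\{y_i, x\}$, and since $\sqrt{p(v)} = \sqrt{a_i}\sqrt{p_i(v)}$ for $v \in V_i$, the identities
\begin{equation}
I_{\overline{E}_k}(p) = \sum_{i=1}^k a_i\, I_{E_i}(p_i) + \sum_{i=1}^k \bigl(\sqrt{c_i}-\sqrt{b}\,\bigr)^2
\end{equation}
and
\begin{equation}
J_{\overline{V}_k}(p) = -b \log b + \sum_{i=1}^k \bigl[a_i\, J_{V_i}(p_i) - a_i \log a_i\bigr]
\end{equation}
follow from direct expansion (using $-(a_i p_i(v))\log(a_i p_i(v)) = -a_i p_i(v)\log p_i(v) - a_i p_i(v) \log a_i$ and summing over $v \in V_i$). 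Adding them, the objective becomes
\begin{equation}
\sum_{i=1}^k a_i \bigl[I_{E_i}(p_i) + \varrho J_{V_i}(p_i)\bigr] + \sum_{i=1}^k \bigl(\sqrt{c_i}-\sqrt{b}\,\bigr)^2 - \varrho \sum_{i=1}^k a_i \log a_i - \varrho b \log b.
\end{equation}

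Finally, I would take the infimum by first fixing $(a_i, c_i)_{i=1}^k$ and optimising over the probability measures $p_i$ subject only to the boundary constraint $p_i(y_i) = c_i/a_i$. By the very definition \eqref{e:altchirest}, this inner infimum equals $\chi_{G_i}^{\ssup{y_i, c_i/a_i}}$ on each component independently, which yields the right-hand side of \eqref{e:joiningk} after a subsequent infimum over the admissible parameters $0 \le c_i \le a_i$ with $\sum_i a_i \le 1$. The main (and only) nuisance is bookkeeping for the degenerate cases: when $a_i = 0$ the constraint forces $c_i = 0$, and the term $a_i (\chi_{G_i}^{\ssup{y_i, c_i/a_i}} - \varrho \log a_i)$ must be read as $0$ (using $0\log 0 = 0$ and the fact that $-a_i\log a_i \to 0$ dominates any finite-order blow-up of $\chi_{G_i}^{\ssup{y_i,\cdot}}$, which takes values in $[0, \varrho \log |V_i|]$ for finite $G_i$ and is handled by an approximation with $a_i \downarrow 0$ otherwise); the case $b = 0$ is handled analogously. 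Both inequalities (``$\le$" via an arbitrary admissible $p$, and ``$\ge$" via nearly optimal $p_i$'s reassembled into $p$) then match, giving \eqref{e:joiningk}.
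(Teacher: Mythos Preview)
Your proposal is correct and is precisely the ``straightforward manipulation with \eqref{e:defIJ}'' that the paper's one-line proof alludes to: parametrise $p$ by its masses on each $V_i$ and on $x$, split $I_{\overline{E}_k}$ and $J_{\overline{V}_k}$ accordingly, and optimise first over the conditional shapes $p_i$ (yielding $\chi_{G_i}^{\ssup{y_i,c_i/a_i}}$) and then over the mass allocation. Your handling of the degenerate cases $a_i=0$ and $b=0$ is also appropriate.
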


\begin{proof}
The claim follows from straightforward manipulations with \eqref{e:defIJ}.
\end{proof}

Lemma~\ref{l:joiningk} leads to the following comparison lemma. For $j \in \N$, let
\begin{equation}
(G^j_i,y^j_i) = \begin{cases}
(G_i, y_i) & \text{if } i < j,\\
(G_{i+1}, y_{i+1}) & \text{if } i \geq j,
\end{cases}
\end{equation}
i.e., $(G^j_i)_{i\in\N}$ is the sequence $(G_i)_{i\in\N}$ with the $j$-th graph omitted. Let $\overline{G}^j_k$ be the analogue of $\overline{G}_k$ obtained from $G^j_i$, $1 \leq i \leq k$, $i\neq j$, instead of $G_i$, $1 \leq i \leq k$.

\begin{lemma}{\bf [Comparison]}
\label{l:comparejoinedk}
For any $\varrho>0$ and any $k \in \N$,
\begin{equation}
\label{e:joinedk+1}
\begin{aligned}
\chi_{\overline{G}_{k+1}}  
& = \inf_{1 \leq j \leq k+1} \; \inf_{0 \leq c \leq u \leq \tfrac{1}{k+1}} \; 
\inf_{\substack{0 \leq c_i \leq a_i \leq 1, \\ a_1+ \cdots +  a_k \leq 1}}
\Bigg\{ 
(1-u) \Big[
\sum_{i=1}^k a_i \big(\chi_{G_{\sigma_j(i)}}^{\ssup{y_{\sigma_j(i)}, c_i/a_i}} - \varrho \log a_i \big) \\
&  \qquad + \sum_{i=1}^k \left( \sqrt{c_i} - \sqrt{1-\sum_{i=1}^k a_i} \right)^2 
 - \varrho \Big(1-\sum_{i=1}^k a_i \Big) \log \Big(1-\sum_{i=1}^k a_i \Big) \Big]  \\
&  \qquad + u \chi_{G_j}^{\ssup{y_j, c/u}} + \left(\sqrt{c} - \sqrt{(1-u)\Big(1-\sum_{i=1}^k a_i \Big) } \right)^2 \\
& \qquad - \varrho \left[ u \log u + (1-u) \log (1-u) \right] 
\Bigg\}.
\end{aligned}
\end{equation}
Moreover, 
\begin{equation}
\label{e:comparedjoinedk}
\begin{aligned}
\chi_{\overline{G}_{k+1}} 
& \geq \inf_{1\leq j \leq k+1} \inf_{ 0 \leq u \leq \tfrac{1}{k+1}} \Bigg\{ (1-u) \chi_{\overline{G}^j_k}  \\
& \qquad \qquad \qquad \qquad \quad  + \inf_{v \in [0,1]} \Big\{ u \chi_{G_j}^{(y_j, v)} + \mathbbm{1}_{\{u(1+v) \geq 1 \}}\Big[\sqrt{vu} - \sqrt{1-u} \, \Big]^2 \Big\} \\
& \qquad \qquad \qquad \qquad \quad  - \varrho \left[u \log u + (1-u) \log(1-u) \right]
\Bigg\}.
\end{aligned}
\end{equation}
\end{lemma}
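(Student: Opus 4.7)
The equality \eqref{e:joinedk+1} is obtained by reparametrizing the variational formula of Lemma~\ref{l:joiningk} applied to $\overline{G}_{k+1}$. Fix any $j \in \{1,\ldots,k+1\}$ and, in the $(k+1)$-variable representation, set $u := a_j$, $c := c_j$, and $\tilde a_i := a_{\sigma_j(i)}/(1-u)$, $\tilde c_i := c_{\sigma_j(i)}/(1-u)$ for $1\le i\le k$ (assume $u<1$; the case $u=1$ is handled separately and trivially). The total-mass identity $1-\sum_{\ell=1}^{k+1}a_\ell = (1-u)\bigl(1-\sum_{i=1}^k \tilde a_i\bigr)$ and the factorization $\sqrt{1-\sum a_\ell} = \sqrt{1-u}\sqrt{1-\sum\tilde a_i}$ make every term for $i\ne j$ pull out a factor of $(1-u)$; the entropy pieces combine via $-\varrho u\log u - \varrho(1-u)\log(1-u)$ (using $\sum\tilde a_i + (1-\sum\tilde a_i)=1$ to absorb the mixed $\log(1-u)$ terms). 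The remaining $i=j$ cross term is $(\sqrt c - \sqrt{(1-u)(1-\sum\tilde a_i)})^2$, which does not simplify further. Since the joint infimum in Lemma~\ref{l:joiningk} does not distinguish the roles of the $k+1$ graphs, I may then restrict to the index $j$ achieving $\min_\ell a_\ell \leq 1/(k+1)$, which yields the constraint $u \leq 1/(k+1)$ and gives \eqref{e:joinedk+1}.

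For the inequality \eqref{e:comparedjoinedk}, I start from \eqref{e:joinedk+1} and apply the elementary bound $\inf[f+g] \geq \inf f + \inf g$ to separate the $(a_i,c_i)$-dependent pieces. The $(1-u)$-bracketed expression is, by inspection, $(1-u)$ times the Lemma~\ref{l:joiningk} functional for $\overline{G}_k^j$ at $(a_i,c_i)_{i=1}^k$, so its infimum over $(a_i,c_i)$ equals $(1-u)\chi_{\overline{G}_k^j}$. For the remaining cross term $(\sqrt c - \sqrt{(1-u)(1-\sum a_i)})^2$, the quantity $(1-u)(1-\sum a_i)$ ranges over $[0,1-u]$ as $\sum a_i$ varies in $[0,1]$. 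Writing $v = c/u\in[0,1]$, a one-line minimization gives
\begin{equation*}
\inf_{s\in[0,1]}\bigl(\sqrt{uv} - \sqrt{(1-u)(1-s)}\bigr)^2
= \mathbbm{1}_{\{uv > 1-u\}}\bigl(\sqrt{uv}-\sqrt{1-u}\bigr)^2
= \mathbbm{1}_{\{u(1+v)\geq 1\}}\bigl(\sqrt{uv}-\sqrt{1-u}\bigr)^2,
\end{equation*}
since the squared difference is zero precisely when $(1-u)(1-s)$ can reach $uv$, otherwise is minimized at $s=0$. Substituting and reorganizing yields \eqref{e:comparedjoinedk}.

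The algebra in the first step (the reparametrization and the cancellation of the $\log(1-u)$ entropy pieces) is somewhat bookkeeping-heavy, and is the only place where care is required; after that, both \eqref{e:joinedk+1} and \eqref{e:comparedjoinedk} are straightforward consequences of Lemma~\ref{l:joiningk} and sub-additivity of infima. I expect no genuine obstacle, since in the regime of interest $u\le 1/(k+1)\le 1/2$, so the indicator in \eqref{e:comparedjoinedk} contributes only in the degenerate boundary case $k=1,\ u=v=\tfrac12$, and the inequality is essentially a clean decoupling into an $\overline{G}_k^j$-part and a $G_j$-part connected through the entropy of the split $u/(1-u)$.
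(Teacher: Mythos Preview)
Your proposal is correct and follows essentially the same route as the paper: the paper proves \eqref{e:joinedk+1} by writing the domain $\{(c_i,a_i)_{i=1}^{k+1}\}$ as the union over $j$ of the reparametrized sets $\{((1-u)(c_i,a_i)_{i<j},(c,u),(1-u)(c_i,a_i)_{i\ge j}):u\le \tfrac1{k+1}\}$ and then manipulates \eqref{e:joiningk}, exactly your scaling substitution; for \eqref{e:comparedjoinedk} it simply observes that the bracketed expression in \eqref{e:joinedk+1} is the functional of Lemma~\ref{l:joiningk} for $\overline G^j_k$, which is your $\inf[f+g]\ge\inf f+\inf g$ step. Your treatment is in fact more explicit than the paper's on the origin of the indicator $\mathbbm{1}_{\{u(1+v)\ge1\}}$, via the minimization of the cross term over $s=\sum a_i\in[0,1]$. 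One phrasing quibble: the infimum in Lemma~\ref{l:joiningk} \emph{does} distinguish the graphs (the $\chi_{G_i}^{(y_i,\cdot)}$ differ); what you use is only that the \emph{constraint set} on $(a_i,c_i)$ is symmetric, so by pigeonhole some $a_j\le\tfrac1{k+1}$ and that index can play the special role.
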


\begin{proof}
Note that
\begin{equation}
\begin{aligned}
& \Big\{(c_i, a_i)_{i=1}^{k+1} \colon\, 0 \leq c_i \leq a_i \leq 1, \sum_{i=1}^{k+1} a_i \leq 1 \Big\} 
= \bigcup_{j=1}^{k+1} \left\{
\substack{\Big( (1-u)(c_i, a_i)_{i=1}^{j-1}, (c, u), (1-u)(c_i,a_i)_{i=j}^k \Big)  \colon\,  
\\ 0\leq c \leq u \leq \tfrac{1}{k+1}, 0 \leq c_i \leq a_i \leq 1, \sum_{i=1}^k a_i \leq 1 }\right\},
\end{aligned}
\end{equation}
from which \eqref{e:joinedk+1} follows by straightforward manipulations on \eqref{e:joiningk}. 
To prove \eqref{e:comparedjoinedk}, note that the first term within the square brackets 
in the first two lines of \eqref{e:joinedk+1} equals the term minimised in \eqref{e:joiningk}, 
and is therefore not smaller than $\chi_{\overline{G}^j_k}$.
\end{proof}

\begin{lemma}{\bf [Propagation of lower bounds]}
\label{l:comparelargerho}
If $\varrho>0$, $M \in \R$,  $C >0$ and $k \in \N$ satisfy $\varrho \geq C/\log(k+1)$ and
\begin{equation}
\label{e:assumpcomparelargerho}
\inf_{1 \leq j \leq k+1} \chi_{\overline{G}^j_k}  \geq M, \qquad 
\inf_{1 \leq j \leq k+1} \inf_{v \in [0,1]} \chi_{G_j}^{\ssup{y_j, v}}  \geq M-C,
\end{equation}
then $\chi_{\overline{G}_{k+1}}  \geq M$.
\end{lemma}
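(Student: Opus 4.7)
The plan is to apply the comparison bound \eqref{e:comparedjoinedk} from Lemma \ref{l:comparejoinedk} directly, substitute the hypotheses, and then check that the entropy term from the glueing absorbs the cost $-C$ coming from the weaker bound on $\chi_{G_j}^{\ssup{y_j,v}}$. No additional variational analysis will be needed.

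First I would start from \eqref{e:comparedjoinedk}. The indicator term $\mathbbm{1}_{\{u(1+v)\geq 1\}}[\sqrt{vu}-\sqrt{1-u}]^2$ is non-negative, so it can be discarded in a lower bound. Using the two assumptions in \eqref{e:assumpcomparelargerho}, namely $\chi_{\overline{G}^j_k}\geq M$ and $\chi_{G_j}^{\ssup{y_j,v}}\geq M-C$ for every $v\in[0,1]$, I obtain
\begin{equation}
\chi_{\overline{G}_{k+1}}\ \geq\ \inf_{0\leq u\leq \frac{1}{k+1}}\Big\{(1-u)M+u(M-C)-\varrho\bigl[u\log u+(1-u)\log(1-u)\bigr]\Big\}.
\end{equation}
The bracket collapses to $M-uC-\varrho[u\log u+(1-u)\log(1-u)]$, so it suffices to show, for all $u\in[0,\tfrac{1}{k+1}]$,
\begin{equation}
\label{e:plan-key}
\varrho\bigl[-u\log u-(1-u)\log(1-u)\bigr]\ \geq\ uC.
\end{equation}

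Next, I would estimate the entropy on the left-hand side of \eqref{e:plan-key} from below. Since $(1-u)\log(1-u)\leq 0$ on $[0,1]$, one has $-(1-u)\log(1-u)\geq 0$, so
\begin{equation}
-u\log u-(1-u)\log(1-u)\ \geq\ -u\log u\ \geq\ u\log(k+1),
\end{equation}
the last inequality holding because the map $u\mapsto -u\log u$ is increasing on $[0,1/\mathrm{e}]$ and $u\leq \tfrac{1}{k+1}$ (for $k\geq 2$ this is immediate; for $k=1$ one checks directly that $-u\log u\geq u\log 2$ on $[0,1/2]$ by monotonicity of $-\log u$). Combining this with the hypothesis $\varrho\geq C/\log(k+1)$ yields
\begin{equation}
\varrho\bigl[-u\log u-(1-u)\log(1-u)\bigr]\ \geq\ \varrho\,u\log(k+1)\ \geq\ uC,
\end{equation}
which is exactly \eqref{e:plan-key}. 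Plugging back into the displayed lower bound gives $\chi_{\overline{G}_{k+1}}\geq M$, as required.

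There is no serious obstacle here: the statement is essentially the algebraic content of \eqref{e:comparedjoinedk} once the two a priori bounds are substituted. The only point that requires attention is the monotonicity argument used to conclude $-u\log u\geq u\log(k+1)$ on $[0,\tfrac{1}{k+1}]$, which is the mechanism making the constraint $\varrho\geq C/\log(k+1)$ quantitatively sharp; this is also the reason the condition on $\varrho$ in Theorem~\ref{t:tildechilargerho} takes the form $\varrho\geq 1/\log(d_{\min}+1)$ when the lemma is iterated with the $G_j$'s taken to be copies of the infinite $d_{\min}$-regular tree and $k+1=d_{\min}$.
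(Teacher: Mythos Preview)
Your proof is correct and follows essentially the same route as the paper's: both start from \eqref{e:comparedjoinedk}, drop the non-negative indicator term, substitute the hypotheses, and reduce to the elementary inequality $-\varrho u\log u \geq uC$ on $[0,\tfrac{1}{k+1}]$. Two minor remarks: the inequality $-u\log u\geq u\log(k+1)$ for $u\in(0,\tfrac{1}{k+1}]$ follows immediately from $-\log u\geq \log(k+1)$ (the monotonicity of $u\mapsto -u\log u$ is not the relevant mechanism), and in your closing comment the application in Proposition~\ref{p:minimalchiT} actually uses $k+1\geq d_{\min}+1$ (the degree of the chosen vertex) rather than $k+1=d_{\min}$.
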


\begin{proof}
Dropping some non-negative terms in \eqref{e:comparedjoinedk}, we obtain
\begin{equation}
\begin{aligned}
\chi_{\overline{G}_{k+1}} -M 
& \geq \inf_{0 \leq u \leq 1/(k+1)} \left\{ u \left( \chi_{G_j}^{\ssup{y_j, v}}  - M \right) - \varrho u \log u \right\}\\
& \geq  \inf_{0 \leq u \leq 1/(k+1)} \left\{ u \left( \varrho \log (k+1) - C \right)\right\} \geq 0
\end{aligned}
\end{equation}
by the assumption on $\varrho$.
\end{proof}

\noindent
The above results will be applied in the next section to minimise $\chi$ over families of trees with minimum degrees.


\subsection{Trees with minimum degrees}
\label{ss:treesmindeg}

Fix $d \in \N$. Let $\mr{\cT}_d$ be an infinite tree rooted at $\cO$ such that the degree of $\cO$ equals $d-1$ and the degree of every other vertex in $\mr{\cT}_d$ is $d$. Let $\mathring{\mathscr{T}}_d^{\ssup 0} = \{\mathring{\cT}_d\}$ and, recursively, let $\mr{\mathscr{T}}_d^{\ssup{n+1}}$ denote the set of  all trees obtained from a tree in $\mr{\mathscr{T}}_d^{\ssup n}$ and a disjoint copy of $\mr{\cT}_d$ by adding an edge between a vertex of the former and the root of the latter. Write $\mr{\mathscr{T}}_d = \bigcup_{n \in\N_0} \mr{\mathscr{T}}_d^{\ssup n}$. Assume that all trees in $\mr{\mathscr{T}}_d$ are rooted at $\cO$.

Recall that $\cT_d$ is the infinite regular $d$-tree. Observe that $\cT_d$ is obtained from $(\mr{\cT}_d, \cO)$ and a disjoint copy $(\mr{\cT}_d', \cO')$ by adding one edge between $\cO$ and $\cO'$. Consider $\cT_d$ to be rooted at $\cO$. Let $\mathscr{T}_d^{\ssup 0} = \{\cT_d\}$ and, recursively, let $\mathscr{T}_d^{\ssup{n+1}}$ denote the set of all trees obtained from a tree in $\mathscr{T}_d^{\ssup n}$ and a disjoint copy of $\mr{\cT}_d$ by adding an edge between a vertex of the former and the root of the latter. Write $\mathscr{T}_d = \bigcup_{n \in\N_0} \mathscr{T}_d^{\ssup n}$, and still consider all trees in $\mathscr{T}_d$ to be rooted at $\cO$. Note that $\mathscr{T}_d^{\ssup n}$ contains precisely those trees of $\mr{\mathscr{T}}_d^{\ssup{n+1}}$ that have $\cT_d$ as a subgraph rooted at $\cO$. In particular, $\mathscr{T}_d^{\ssup n} \subset \mr{\mathscr{T}}_d^{\ssup{n+1}}$ and $\mathscr{T}_d \subset \mr{\mathscr{T}}_d$.

Our objective is to prove the following.

\begin{proposition}{\bf [Minimal tree is optimal]}
\label{p:minimalchiT}
If $\varrho \geq 1/ \log(d+1)$, then 
\[
\chi_{\cT_d}(\varrho)  = \min_{T \in \mathscr{T}_d} \chi_T(\varrho).
\]
\end{proposition}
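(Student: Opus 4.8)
The plan is to show the two inequalities separately. The inequality $\chi_{\cT_d}(\varrho) \geq \min_{T \in \mathscr{T}_d} \chi_T(\varrho)$ is trivial since $\cT_d \in \mathscr{T}_d$ (it is the element of $\mathscr{T}_d^{\ssup 0}$). For the reverse inequality $\chi_T(\varrho) \geq \chi_{\cT_d}(\varrho)$ for every $T \in \mathscr{T}_d$, I would argue by induction on the level $n$ for which $T \in \mathscr{T}_d^{\ssup n}$. The base case $n=0$ is the identity $\chi_{\cT_d} = \chi_{\cT_d}$. For the inductive step, I would first establish the companion bound for the ``rooted'' family $\mr{\mathscr{T}}_d$, namely that
\begin{equation}
\label{e:plan-rooted}
\inf_{T \in \mr{\mathscr{T}}_d^{\ssup n}} \; \inf_{v \in [0,1]} \chi_T^{\ssup{\cO, v}}(\varrho) \geq \chi_{\cT_d}(\varrho) - \frac{1}{\log(d+1)} \geq 0,
\end{equation}
(using $\varrho \geq 1/\log(d+1)$ and $\chi \geq 0$), also by induction on $n$, and then feed this into Lemma~\ref{l:comparelargerho}.

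\textbf{Key steps.} Concretely, a tree $T \in \mathscr{T}_d^{\ssup{n+1}}$ is obtained from some $T' \in \mathscr{T}_d^{\ssup n}$ by attaching a fresh copy of $\mr{\cT}_d$ via one edge at some vertex of $T'$; equivalently, $T$ can be written in the ``glue-many-plus-vertex'' form $\overline{G}_{k+1}$ of Lemma~\ref{l:joiningk} where the building blocks $G_i$ are the branches hanging off a chosen vertex $x$ of $T$. To apply Lemma~\ref{l:comparelargerho} with $C = 1$ and $M = \chi_{\cT_d}(\varrho)$, I need (i) $\inf_j \chi_{\overline{G}^j_k}(\varrho) \geq \chi_{\cT_d}(\varrho)$ — each such $\overline{G}^j_k$ is again a tree in $\mr{\mathscr{T}}_d$ of lower level (it is $T$ with one copy of $\mr{\cT}_d$ deleted), so this holds by the rooted induction hypothesis combined with $\chi^{\ssup{x,b}}_G \geq \chi_G$; and (ii) the ``boundary-condition'' bound $\inf_j \inf_v \chi_{G_j}^{\ssup{y_j,v}}(\varrho) \geq \chi_{\cT_d}(\varrho) - 1$ — here each $G_j$ with its marked vertex is, up to isomorphism, either a copy of $\mr{\cT}_d$ rooted at $\cO$ or a strictly smaller member of $\mr{\mathscr{T}}_d$ rooted appropriately, so this is exactly \eqref{e:plan-rooted} at a lower level. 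The base of the rooted induction, $\inf_v \chi_{\mr{\cT}_d}^{\ssup{\cO,v}}(\varrho) \geq \chi_{\cT_d}(\varrho) - 1/\log(d+1)$, would be proved directly: gluing two copies of $\mr{\cT}_d$ at their roots gives $\cT_d$, so Lemma~\ref{l:comparejoinedk} (with $k=1$) expresses $\chi_{\cT_d}$ in terms of $\chi_{\mr{\cT}_d}^{\ssup{\cO,\cdot}}$, and dropping non-negative terms as in the proof of Lemma~\ref{l:comparelargerho} yields $\chi_{\cT_d}(\varrho) \leq \inf_v \chi_{\mr{\cT}_d}^{\ssup{\cO,v}}(\varrho) + \varrho^{-1}\cdot(\text{something})$; a short computation of $\sup_{0\le u \le 1/2}(-u\log u)$-type terms gives the constant $1/\log(d+1)$. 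One must also check the minor point that removing a copy of $\mr\cT_d$ from $T\in\mathscr T_d^{\ssup{n+1}}$ never destroys the distinguished $\cT_d$-subgraph at $\cO$ unless we removed exactly one of the two ``core'' copies, in which case what remains is still in $\mr{\mathscr{T}}_d^{\ssup n}$ — this is the content of the inclusion $\mathscr{T}_d^{\ssup n} \subset \mr{\mathscr{T}}_d^{\ssup{n+1}}$ already noted.

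\textbf{Main obstacle.} The routine calculations (the manipulations behind Lemmas~\ref{l:joiningk} and~\ref{l:comparejoinedk}) are already granted, so the real work is bookkeeping: carefully setting up the \emph{double} induction (on the level of the tree, with the rooted statement \eqref{e:plan-rooted} and the unrooted statement $\chi_T \geq \chi_{\cT_d}$ proved in tandem) and verifying that every tree produced by deleting a branch or a copy of $\mr\cT_d$ genuinely lands back in the appropriate family $\mr{\mathscr{T}}_d^{\ssup m}$ or $\mathscr{T}_d^{\ssup m}$ with strictly smaller $m$, so the induction is well-founded. The one genuinely quantitative point is pinning down that the loss constant in the base case \eqref{e:plan-rooted} is exactly $1/\log(d+1)$ — this is where the hypothesis $\varrho \geq 1/\log(d+1)$ enters sharply, via $\varrho\log(d+1) \geq 1 = C$ in Lemma~\ref{l:comparelargerho} — but it reduces to estimating the elementary one-variable expression coming from the glueing of two $\mr\cT_d$'s into $\cT_d$.
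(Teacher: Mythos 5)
Your overall strategy (induct on the level of the tree, feeding glueing inequalities into Lemma~\ref{l:comparelargerho} with $C=1$ and $M=\chi_{\cT_d}$) is the right one and matches the paper's. However, there are two genuine gaps that would derail the argument as written.

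First, hypothesis (i) of Lemma~\ref{l:comparelargerho} requires $\chi_{\overline{G}^j_k} \geq M = \chi_{\cT_d}$ exactly, but your justification --- that $\overline{G}^j_k$ is ``a tree in $\mr{\mathscr{T}}_d$ of lower level'' --- only delivers the weaker bound $\chi_{\overline{G}^j_k} \geq \chi_{\cT_d} - 1/\log(d+1)$ from your rooted induction hypothesis \eqref{e:plan-rooted}. That is off by exactly $C$ and the lemma won't close. What is actually needed, and what the paper verifies, is that $\overline{G}^j_k$ lies in the \emph{unrooted} family $\mathscr{T}_d^{\ssup{n_j}}$ for some $n_j\leq n_0$, so that the unrooted induction hypothesis $\chi_{\overline{G}^j_k}\geq \chi_{\cT_d}$ applies directly. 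This membership is not a triviality: when the severed edge $(x,y_j)$ is a $\cT_d$-edge, the $\cT_d$-core of $T$ is split, and one has to check that the attached copies of $\mr{\cT}_d$ on the $\overline{G}^j_k$ side recombine with the surviving half of the core to form an isomorphic copy of $\cT_d$. You flag a ``minor point'' here but it is in fact the crux of step (i), and it has nothing to do with the inclusion $\mathscr{T}_d^{\ssup n} \subset \mr{\mathscr{T}}_d^{\ssup{n+1}}$.

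Second, the base case of \eqref{e:plan-rooted} is both incorrectly derived and unnecessarily strong. You argue that $\cT_d$ is the glueing of two copies of $\mr{\cT}_d$ at their roots and invoke Lemma~\ref{l:comparejoinedk} with $k=1$; but Lemma~\ref{l:comparejoinedk} describes the graph $\overline{G}_{k+1}$ which has an \emph{additional} joining vertex $x$ of degree $k+1$, so $\overline{G}_2$ has a degree-$2$ vertex and is not $\cT_d$ unless $d=2$. (As a glue-many-plus-vertex graph, $\cT_d = \overline{G}_d$ with $G_1=\dots=G_d=\mr{\cT}_d$, not $\overline{G}_2$.) Moreover the improved constant $1/\log(d+1)$ is not needed: since you apply Lemma~\ref{l:comparelargerho} with $C=1$, all you need for (ii) is $\inf_v \chi_{G_j}^{\ssup{y_j,v}} \geq \chi_{\cT_d} - 1$, and this follows from two simpler observations the paper isolates into separate lemmas: (a) $\chi_{\mr{\cT}_d} = \min_{T \in \mr{\mathscr{T}}_d}\chi_T$ for \emph{all} $\varrho$, proved by a one-line induction using the elementary glue-two bound $\chi_{G_1\cup G_2 + \text{edge}} \geq \min\{\chi_{G_1},\chi_{G_2}\}$ of Lemma~\ref{l:compjoin2}; and (b) $\chi_{\cT_d} \leq \chi_{\mr{\cT}_d} + 1$, obtained by restricting the infimum in \eqref{e:defchiG} for $\cT_d$ to measures supported on a sub-$\mr{\cT}_d$ (the extra edge then contributes at most $p(\cO)\leq 1$ to $I_E$). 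With (a) and (b) in hand, $\inf_v\chi_{G_j}^{\ssup{y_j,v}} \geq \chi_{G_j} \geq \chi_{\mr{\cT}_d} \geq \chi_{\cT_d}-1$ falls out immediately, and the entire ``rooted induction'' strand of your plan can be dropped, leaving a single induction on the level within $\mathscr{T}_d$.
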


For the proof of Proposition~\ref{p:minimalchiT}, we will need the following. 

\begin{lemma}{\bf [Minimal half-tree is optimal]}
\label{l:minimalchimrT}
For all $\varrho \in (0,\infty)$, 
\[
\chi_{\mr{\cT}_d}(\varrho)  = \min_{T \in \mr{\mathscr{T}}_d} \chi_T(\varrho).
\]
\end{lemma}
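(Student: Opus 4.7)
The plan is to prove by induction on $n$ that every $T \in \mr{\mathscr{T}}_d^{\ssup n}$ satisfies $\chi_T(\varrho) \geq \chi_{\mr{\cT}_d}(\varrho)$. Since $\mr{\cT}_d \in \mr{\mathscr{T}}_d^{\ssup 0} \subset \mr{\mathscr{T}}_d$, this at once shows that the infimum over $\mr{\mathscr{T}}_d$ equals $\chi_{\mr{\cT}_d}(\varrho)$ and is attained at $\mr{\cT}_d$.

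The base case $n = 0$ is immediate, as $\mr{\mathscr{T}}_d^{\ssup 0} = \{\mr{\cT}_d\}$ by definition.

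For the inductive step, observe that any $T \in \mr{\mathscr{T}}_d^{\ssup{n+1}}$ is, by the recursive construction, obtained from some $T' \in \mr{\mathscr{T}}_d^{\ssup n}$ and a disjoint copy of $\mr{\cT}_d$ by inserting a single edge between a vertex of $T'$ and the root of the attached copy. This is exactly the setup of Lemma~\ref{l:compjoin2}, which therefore yields
\begin{equation*}
\chi_T(\varrho) \geq \min\bigl\{\chi_{T'}(\varrho),\, \chi_{\mr{\cT}_d}(\varrho)\bigr\}.
\end{equation*}
By the inductive hypothesis the right-hand side equals $\chi_{\mr{\cT}_d}(\varrho)$, so the induction closes.

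All the analytic work has already been absorbed into Lemma~\ref{l:compjoin2}; the only nontrivial observation is that the recursive definition of $\mr{\mathscr{T}}_d$ lets us reduce any tree in the family to a sequence of one-edge glueings with copies of $\mr{\cT}_d$, which is precisely the operation to which Lemma~\ref{l:compjoin2} applies. In particular the argument is uniform in $\varrho \in (0,\infty)$ and no threshold on $\varrho$ is needed, in contrast with the subsequent Proposition~\ref{p:minimalchiT}, where one must additionally bridge from the rooted half-tree $\mr{\cT}_d$ to the full regular tree $\cT_d$; that step requires the sharper comparison in Lemma~\ref{l:comparelargerho} and is where the assumption $\varrho \geq 1/\log(d+1)$ enters.
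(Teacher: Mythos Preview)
Your proof is correct and follows essentially the same approach as the paper: induction on $n$ over the classes $\mr{\mathscr{T}}_d^{\ssup n}$, with the inductive step handled by a single application of Lemma~\ref{l:compjoin2} to the glued pair $(T', \mr{\cT}_d)$. The closing commentary contrasting with Proposition~\ref{p:minimalchiT} is accurate but not part of the paper's proof.
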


\begin{proof}
Fix $\varrho \in (0,\infty)$. It will be enough to show that
\begin{equation}
\label{e:prminchimrT1}
\chi_{\mr{\cT}_d}  = \min_{T \in \mr{\mathscr{T}}_d^{\ssup n}} \chi_T,\qquad n\in\N_0,
\end{equation}
which we will achieve by induction in $n$. The case $n=0$ is obvious. Assume that \eqref{e:prminchimrT1} holds for some $n\in\N_0$. Any tree $T \in \mr{\mathscr{T}}_d^{\ssup{n+1}}$ can be obtained from a tree $\widetilde{T} \in \mr{\mathscr{T}}_d^{\ssup n}$ and a disjoint copy $\mr{\cT}'_d$ of $\mr{\cT}_d$ by adding an edge between a point $\tilde{x}$ in the vertex set of $\widetilde{T}$ to the root of $\mr{\cT}'_d$. Applying Lemma~\ref{l:compjoin2} together with the induction hypothesis, we obtain
\begin{equation}
\chi_{T}  \geq \min\left\{\chi_{\widetilde{T}} , \chi_{\mr{\cT}'_d}  \right\} \geq \chi_{\mr{\cT}_d},
\end{equation}
which completes the induction step.
\end{proof}

\begin{lemma}{\bf [A priori bounds]}
\label{l:condchiT}
For any $d \in \N$ and any $\varrho \in (0,\infty)$,
\begin{equation}
\chi_{\mr{\cT}_d}(\varrho)  \leq \chi_{\cT_d}(\varrho)  \leq \chi_{\mr{\cT}_d}(\varrho)  + 1.
\end{equation}
\end{lemma}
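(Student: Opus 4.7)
The plan is to prove the two inequalities separately; each is a one-line consequence of tools already developed in the preceding subsections, so no new machinery is required.

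For the lower bound $\chi_{\mr{\cT}_d}(\varrho) \leq \chi_{\cT_d}(\varrho)$, I would simply invoke Lemma~\ref{l:compjoin2}. By the very definition given at the start of Section~\ref{ss:treesmindeg}, $\cT_d$ arises from two disjoint isomorphic copies $\mr{\cT}_d$ and $\mr{\cT}_d'$ by adding a single edge between their roots $\cO$ and $\cO'$. Both pieces are connected simple graphs, so Lemma~\ref{l:compjoin2} applies and gives $\chi_{\cT_d} \geq \min\{\chi_{\mr{\cT}_d}, \chi_{\mr{\cT}_d'}\} = \chi_{\mr{\cT}_d}$.

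For the upper bound $\chi_{\cT_d}(\varrho) \leq \chi_{\mr{\cT}_d}(\varrho) + 1$, I would test the variational problem on $\cT_d$ with a probability measure coming from an almost-optimal $p \in \cP(V(\mr{\cT}_d))$ for $\chi_{\mr{\cT}_d}$. Define $\tilde p \in \cP(V(\cT_d))$ by $\tilde p \equiv p$ on $V(\mr{\cT}_d)$ and $\tilde p \equiv 0$ on $V(\mr{\cT}_d')$. Using the standard convention $0 \log 0 = 0$, the entropy term is unchanged: $J_{V(\cT_d)}(\tilde p) = J_{V(\mr{\cT}_d)}(p)$. The Dirichlet term picks up only the contribution of the single new edge $\{\cO, \cO'\}$, which equals $(\sqrt{p(\cO)}-0)^2 = p(\cO) \leq 1$, so $I_{E(\cT_d)}(\tilde p) \leq I_{E(\mr{\cT}_d)}(p) + 1$. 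Taking the infimum over $p$ yields the claim.

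No step here is an obstacle; the only mildly delicate point is the harmless extension-by-zero, which is legitimate precisely because $x \log x \to 0$ as $x \downarrow 0$ and the boundary edge contributes the bounded term $p(\cO) \leq 1$ to $I_E$. The factor $1$ on the right-hand side is exactly this edge cost, independent of $\varrho$.
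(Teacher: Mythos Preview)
Your proof is correct and essentially the same as the paper's. For the second inequality both arguments restrict to measures supported on the copy of $\mr{\cT}_d$ inside $\cT_d$, picking up at most $p(\cO)\le 1$ from the single extra edge; for the first inequality you apply Lemma~\ref{l:compjoin2} directly to the decomposition $\cT_d = \mr{\cT}_d \cup \{(\cO,\cO')\} \cup \mr{\cT}_d'$, whereas the paper quotes Lemma~\ref{l:minimalchimrT} (using $\cT_d \in \mr{\mathscr{T}}_d^{\ssup 1}$), whose proof is itself nothing but this same application of Lemma~\ref{l:compjoin2}.
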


\begin{proof}
The first inequality follows from Lemma~\ref{l:minimalchimrT}. For the second inequality, note that $\cT_d$ contains as subgraph a copy of $\mr{\cT}_d$, and restrict the minimum in \eqref{e:defchiG} to $p \in \cP(\mr{\cT}_d)$. 
\end{proof}

\begin{proof}[Proof of Proposition~\ref{p:minimalchiT}]
Fix $\varrho \geq 1/\log(d+1)$. It will be enough to show that
\begin{equation}
\label{e:prminchiT1}
\chi_{\cT_d}  = \min_{T \in \mathscr{T}_d^{\ssup n}}\chi_T,\qquad n\in\N_0 .
\end{equation}
We will prove this by induction in $n$. The case $n=0$ is trivial. Assume that, for some $n_0\geq 0$, \eqref{e:prminchiT1} holds for all $n \leq n_0$. Let $T \in \mathscr{T}^{\ssup{n_0+1}}_d$. Then there exists a vertex $x$ of $T$ with degree $k+1 \geq d+1$. Let $y_1, \ldots, y_{k+1}$ be set of neighbours of $x$ in $T$. When we remove the edge between $y_j$ and $x$, we obtain two connected trees; call $G_j$ the one containing $y_j$, and $\overline{G}^j_k$ the other one. With this notation, $T$ may be identified with $\overline{G}_{k+1}$. 

Now, for each $j$, the rooted tree $(G_j,y_j)$ is isomorphic (in the obvious sense) to a tree in $\mr{\mathscr{T}}_d^{\ssup{\ell_j}}$, where $\ell_j \in \N_0$ satisfy $\ell_1 + \cdots + \ell_{k+1} \leq n_0$, while $\overline{G}^j_k$ belongs to $\mathscr{T}_d^{(n_j)}$ for some $n_j \leq n_0$. Therefore, by the induction hypothesis,
\begin{equation}
\label{e:prminchiT2}
\chi_{\overline{G}^j_k}  \geq \chi_{\cT_d},
\end{equation}
while, by \eqref{e:altchirest}, Lemma~\ref{l:minimalchimrT} and Lemma~\ref{l:condchiT},
\begin{equation}
\label{e:prminchiT3}
\inf_{v \in [0,1]} \chi_{G_j}^{(y_j, v)}  \geq \chi_{G_j}  \geq \chi_{\mr{\cT}_d}  \geq \chi_{\cT_d}  -1.
\end{equation}
Thus, by Lemma~\ref{l:compjoin2} applied with $M=\chi_{\cT_d} $ and $C=1$,
\begin{equation}
\chi_{T}  = \chi_{\bar{G}_{k+1}}  \geq \chi_{\cT_d}, 
\end{equation}
which completes the induction step.
\end{proof}

\begin{proof}[Proof of Theorem~\ref{t:tildechilargerho}]
First note that, since $\cT_{d_{\min}}$ has degrees in $\supp(D_g)$, $\widetilde{\chi}(\varrho) \leq \chi_{\cT_{d_{\min}}}(\varrho)$. For the opposite inequality, we proceed as follows. Fix an infinite tree $T$ with degrees in $\supp(D_g)$, and root it at a vertex $\YY$. For $r \in \N$, let $\widetilde{T}_r$ be the tree obtained from $B_r = B^T_r(\YY)$ by attaching to each vertex $x \in B_r$ with $|x| = r$ a number $d_{\min}-1$ of disjoint copies of $(\mathring{\cT}_{d_{\min}}, \cO)$, i.e., adding edges between $x$ and the corresponding roots. Then $\widetilde{T}_r \in \mathscr{T}_{d_{\min}}$ and, since $B_r$ has more out-going edges in $T$ than in $\widetilde{T}_r$, we may check using \eqref{e:hatchidualrep1} that
\begin{equation}
\widehat{\chi}_{B_r}(\varrho ; T) \geq \widehat{\chi}_{B_r}(\varrho; \widetilde{T}_r) 
\geq \chi_{\widetilde{T}_r}(\varrho) \geq \chi_{\cT_{d_{\min}}}(\varrho).
\end{equation}
Taking $r \to \infty$ and applying Proposition~\ref{p:dualrepchi}, we obtain $\chi_T(\varrho) \geq \chi_{\cT_{d_{\min}}}(\varrho)$. Since $T$ is arbitrary, the proof is complete.
\end{proof}



\end{document}